\documentclass[10pt]{amsart}
\usepackage{pdflscape}
\usepackage{preample}

\begin{document}

\title{Hodge cohomology with a ramification filtration, II}
\date{\today}
\author[S. Kelly]{Shane Kelly}
\address{
Graduate School of Mathematical Sciences
University of Tokyo
3-8-1 Komaba Meguro-ku
Tokyo 153-8914, Japan
}
\email{shanekelly@g.ecc.u-tokyo.ac.jp}
\author[H. Miyazaki]{Hiroyasu Miyazaki}
\address{NTT Corporation, NTT Institute for Fundamental Mathematics, 3-1 Morinosato-Wakamiya,Atsugi,Kanagawa 243-0198, Japan}
\email{hiroyasu.miyazaki@ntt.com}
\thanks{
The first author was supported by JSPS KAKENHI Grant (19K14498). %
The second author is supported by JSPS KAKENHI Grant (21K13783).
}

\begin{abstract}
As a sequel of Part I, we consider a filtration of Hodge cohomology groups indexed by divisors ``at infinity'', and prove that they are represented in the category of motives with modulus. 
In particular, we obtain a realisation functor of the Hodge cohomology groups. 
\end{abstract}

\maketitle

\begin{center}
\today
\end{center}


\setcounter{tocdepth}{1}
\tableofcontents

\section{Introduction} \label{sec:intro}

The theory of mixed motives developed by Voevodsky can be regarded as a theory which unifies the cohomologies of $\AA^1$-homotopy invariant sheaves, including Betti, de Rham and $\ell$-adic \'etale cohomolgies.
Voevodsky's theory has provided many fruitful applications, but it cannot capture non-$\mathbb{A}^1$-invariant cohomologies. 
A very standard example of such is the cohomology of the structure sheaf $\OO$, which is obviously far from $\mathbb{A}^1$-invariant: $\OO_{X \times \AA^1} \cong \OO_X [t] \neq \OO_X$. 

In the previous work \cite{KelMi1}, we proved that the cohomology of the structure sheaf $\OO$ can be represented in the category of motives with modulus $\ulMDM^{\eff}$. 
The category was introduced by Kahn, Miyazaki, Saito, and Yamazaki in \cite{kmsy1}, \cite{kmsy2}, \cite{kmsy3} as an upgrade of Voevodsky's category of mixed motives. 
Let us recall the precise statement:

\begin{thm}\label{thm:main-I}
Suppose $\mathrm{char} (k)=0$.
Then there exists a unique object $\ulMO \in \ulMDM^{\eff}_k$ such that for any $\sX = (\hX,\mX) \in \ulMCor$ with normal crossings and for any $n \in \Z$ we have 
\[
\Hom_{\ulMDM^{\eff}_k} (M(\sX),\ulMO[n]) \cong H^n_{\Zar} (\hX,\sqrt{I} \otimes I^{-1}),
\]
where $I$ denotes the ideal sheaf defining $\mX \subset \hX$.
\end{thm}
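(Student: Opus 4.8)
The plan is to realise $\ulMO$ as (the class in $\ulMDM^{\eff}_k$ of) an explicit modulus Nisnevich sheaf with transfers and then to compute its cohomology by hand. For $\sX=(\hX,\mX)$ with $I=I_{\mX}$, writing $\mX=\sum_i m_i D_i$ one has $\sqrt{I}\otimes I^{-1}\cong\OO_{\hX}(\sum_i(m_i-1)D_i)$, the sheaf of rational functions with at worst a pole of order $m_i-1$ along $D_i$; it lies between $\OO_{\hX}$ (the value one gets when $\mX$ is reduced) and $I^{-1}=\OO_{\hX}(\mX)$. First I would define a presheaf on $\ulMCor$ by $\ulMO(\sX):=\Gamma(\hX,\sqrt{I}\otimes I^{-1})$ and equip it with transfers: for a prime modulus correspondence with normalisation-of-closure $\overline{V}^{N}\subseteq\hX\times\hY$, $p\colon\overline{V}^{N}\to\hX$ finite surjective, $q\colon\overline{V}^{N}\to\hY$, set $V^{\ast}(f):=\operatorname{tr}_{p}(q^{\ast}f)$ with $\operatorname{tr}_{p}$ the trace of coherent duality for $\OO$. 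The point is \emph{well-definedness}: $q^{\ast}f$ has poles bounded by $q^{\ast}(\mY-\mY_{\mathrm{red}})$, and $\operatorname{tr}_{p}$ sends $\Gamma(\OO_{\overline{V}^{N}}(E))$ into $\Gamma(\OO_{\hX}(F))$ exactly when $E\le p^{\ast}F+\mathfrak{d}_{p}$ ($\mathfrak{d}_{p}$ the Dedekind different of $p$), so one is reduced to the divisorial inequality $q^{\ast}(\mY-\mY_{\mathrm{red}})\le p^{\ast}(\mX-\mX_{\mathrm{red}})+\mathfrak{d}_{p}$; this is a local DVR computation that follows from the modulus admissibility $q^{\ast}\mY\le p^{\ast}\mX$ together with the tame bound $v(\mathfrak{d})=e-1$ valid in characteristic $0$. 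Functoriality of the transfers then comes from transitivity of duality traces and the standard bookkeeping for composing finite correspondences. Next, $\ulMO$ must be a sheaf for the modulus Nisnevich topology: Nisnevich descent on $\hX$ is clear since $\sqrt{I}\otimes I^{-1}$ is quasi-coherent and $\operatorname{tr}_{p}$ commutes with \'etale base change, while descent along the remaining covers (proper modulus modifications, e.g.\ admissible blow-ups with centre over $|\mX|$) reduces by the projection formula to invariance of $R\Gamma(\hX,\sqrt{I}\otimes I^{-1})$ under such modifications, which resolution of singularities (char.\ $0$) lets one check in the normal crossings case.

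The second point is cube-invariance, where the precise shape of the sheaf matters. Writing $\mathrm{pr}\colon\hX\times\mathbb{P}^{1}\to\hX$ and $\overline{\square}=(\mathbb{P}^{1},\infty)$, one has $\sX\otimes\overline{\square}=(\hX\times\mathbb{P}^{1},\,\mX\times\mathbb{P}^{1}+\hX\times\{\infty\})$, and since the new component $\hX\times\{\infty\}$ occurs with multiplicity one it cancels upon passing to ``$\mX-\mX_{\mathrm{red}}$'', so $\sqrt{I_{\sX\otimes\overline{\square}}}\otimes I_{\sX\otimes\overline{\square}}^{-1}=\mathrm{pr}^{\ast}(\sqrt{I}\otimes I^{-1})$. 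The projection formula and $R\mathrm{pr}_{\ast}\OO_{\hX\times\mathbb{P}^{1}}=\OO_{\hX}$ then give $R\Gamma_{\Zar}(\hX\times\mathbb{P}^{1},\mathrm{pr}^{\ast}\mathcal{L})\cong R\Gamma_{\Zar}(\hX,\mathcal{L})$ for every quasi-coherent $\mathcal{L}$ and in every degree. In particular $\ulMO$ is not merely cube-invariant but \emph{strictly} so: its higher cohomology presheaves $\sX\mapsto H^{n}_{\Zar}(\hX,\sqrt{I}\otimes I^{-1})$ are again cube-invariant, by the same computation. This is where one would otherwise need an analogue of Voevodsky's strict homotopy invariance theorem, which is not available in the modulus setting; the explicit quasi-coherent description makes it unnecessary.

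It then remains to deduce representability and uniqueness. A strictly cube-invariant modulus Nisnevich sheaf with transfers, viewed via a flasque resolution as a complex of such sheaves, is $\overline{\square}$-local in the derived category of modulus Nisnevich sheaves with transfers, and therefore defines an object of $\ulMDM^{\eff}_{k}$ with $\Hom_{\ulMDM^{\eff}_{k}}(M(\sX),\ulMO[n])\cong\mathbb{H}^{n}_{\mathrm{MNis}}(\sX,\ulMO)$ for all $\sX$; and for $\sX$ of normal crossings the right-hand side is $H^{n}_{\Zar}(\hX,\sqrt{I}\otimes I^{-1})$, since $\sqrt{I}\otimes I^{-1}$ is quasi-coherent (so Zariski and Nisnevich cohomology coincide) and the extra modulus covers do not change its cohomology by the sheaf-property analysis. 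Uniqueness is Yoneda: the $M(\sX)$ with $\sX$ of normal crossings generate $\ulMDM^{\eff}_{k}$ as a localising triangulated subcategory, any modulus pair being replaceable, through an admissible modulus modification invertible in $\ulMDM^{\eff}_{k}$, by one with normal crossings (resolution of singularities, char.\ $0$).

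The step I expect to be the real obstacle is the trace computation of the first paragraph: establishing the different inequality $q^{\ast}(\mY-\mY_{\mathrm{red}})\le p^{\ast}(\mX-\mX_{\mathrm{red}})+\mathfrak{d}_{p}$ in full generality (with $\overline{V}^{N}$ only normal and ramification possibly occurring along intersections of components), verifying that the induced transfers are functorial, and pushing the modulus-Nisnevich descent through proper modulus modifications — in short, controlling exactly how the coherent trace interacts with the ramification filtration. This is also the place where characteristic $0$ is genuinely used, both for resolution of singularities and for the tameness behind $v(\mathfrak{d})=e-1$.
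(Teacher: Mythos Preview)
This theorem is not proved in the present paper; it is quoted from \cite{KelMi1}. The parallel result for $\ulMOmega^q$ (Theorem~\ref{thm:main-II}) is, however, proved here following the same template, and your proposal can be compared against that.

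Your overall architecture matches: construct the sheaf, equip it with transfers, verify cube and blow-up invariance, deduce $\bcube$-locality and compute the $\Hom$. Two steps differ in method. For transfers, the paper (Theorem~\ref{thm:transfer-MOmega} and Lemma~\ref{lem:MOmegaCart} here; \cite[Prop.~4.3]{KelMi1} for $\ulMO$) does not build a trace from coherent duality. Instead it uses that the classical transfer on $\OO$ already exists on the interiors, and shows it restricts to $\ulMO$ by proving that the square with vertices $\ulMO(\sX)$, $\ulMO(\sW)$, $\OO(\iX)$, $\OO(\iW)$ is Cartesian for a suitable dominant minimal $\sW\to\sX$. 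Your different-inequality route is a genuine alternative and the inequality $q^{*}(\mY-\mY_{\mathrm{red}})\le p^{*}(\mX-\mX_{\mathrm{red}})+\mathfrak{d}_{p}$ is correct in characteristic $0$; but note that $p\colon\overline{V}^{N}\to\hX$ is only \emph{proper}, not finite, so the trace must be defined on the interior and the pole bound checked at height-one points of $\hX$. For cube invariance, your projection-formula argument (the new component $\hX\times\{\infty\}$ has multiplicity one, so $\sqrt{I_{\sX\otimes\bcube}}\otimes I_{\sX\otimes\bcube}^{-1}=\mathrm{pr}^{*}(\sqrt{I}\otimes I^{-1})$, and $R\mathrm{pr}_{*}\OO=\OO$) is slicker than the explicit split exact sequence over the standard cover of $\PP^{1}$ used in \cite[\S5]{KelMi1} and \S\ref{sec:ciOmega} here.

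The step you underestimate is blow-up invariance. You dispose of it in one clause (``resolution of singularities lets one check in the normal crossings case''), but this is where the work is. Even for $\ulMO$ one must show $R\pi_{*}\ulMO_{\sB}\cong\ulMO_{\sA}$ for the blow-up of $\AA^{n}$ along a coordinate subspace inside $|\mA|$, which amounts to identifying $\ulMO_{\sB}\cong\pi^{*}\ulMO_{\sA}\otimes\OO((j{-}1)E)$ (with $j$ the number of components of $|\mA|$ through the centre) and invoking the cohomology of line bundles on the exceptional $\PP^{c-1}$; see \cite[\S6]{KelMi1}, or \S\ref{sec:biOmega} here for the harder $\ulMOmega^{q}$ version. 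Moreover, reducing an arbitrary admissible blow-up to such standard ones is not plain resolution of singularities: one needs weak factorisation, or the strong form of Hironaka's theorem that dominates any proper birational map by a chain of smooth-centre blow-ups, exactly as in the proof of Theorem~\ref{thm:main-II} and Remark~\ref{rem:strong-res}. Without this ingredient the identification $\mathbb{H}^{n}_{\ulMNis}(\sX,\ulMO)=H^{n}_{\Zar}(\hX,\sqrt{I}\otimes I^{-1})$ does not go through, and the $\bcube$-locality argument does not close. This, rather than the trace computation, is the real obstacle.
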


Taking $(\hX,\mX) = (X,\varnothing)$, we immediately obtain that the cohomology of the structure sheaf $\OO$ is also represented in the category of motives with modulus:

\begin{cor}
Suppose $\mathrm{char} (k)=0$.
Then for any $n \in \Z$ and $X \in \Sm$ we have
\[
\Hom_{\ulMDM^{\eff}_k} (M(X,\varnothing),\ulMO[n]) \cong H^n_{\Zar} (X,\OO).
\]
\end{cor}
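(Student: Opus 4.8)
The plan is to obtain the corollary as a direct specialisation of Theorem~\ref{thm:main-I}, applied to the modulus pair $\sX = (X,\varnothing)$. First I would check that this is an admissible input: since $X \in \Sm$, the underlying scheme $\hX = X$ is smooth, and the empty modulus divisor $\mX = \varnothing$ is vacuously a normal crossings divisor, so $\sX = (X,\varnothing)$ is an object of $\ulMCor$ with normal crossings in the sense required by the theorem, and its motive $M(\sX)$ is by definition the object $M(X,\varnothing)$ appearing in the corollary.

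Next I would identify the coefficient sheaf. The ideal sheaf $I \subseteq \OO_X$ cutting out the empty subscheme $\mX = \varnothing$ is the unit ideal $I = \OO_X$; hence $\sqrt{I} = \OO_X$ and $I^{-1} = \OO_X$, so that there is a canonical isomorphism $\sqrt{I} \otimes_{\OO_X} I^{-1} \cong \OO_X$. Substituting $\sX = (X,\varnothing)$ into the isomorphism furnished by Theorem~\ref{thm:main-I} therefore yields
\[
\Hom_{\ulMDM^{\eff}_k}(M(X,\varnothing),\ulMO[n]) \cong H^n_{\Zar}(X,\sqrt{I}\otimes I^{-1}) \cong H^n_{\Zar}(X,\OO)
\]
for every $n \in \Z$, which is precisely the assertion of the corollary.

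There is essentially no obstacle here: the statement is a formal consequence of Theorem~\ref{thm:main-I}, and the only points needing (routine) verification are that the empty divisor qualifies as a normal crossings modulus and that the twist $\sqrt{I}\otimes I^{-1}$ collapses to $\OO_X$ in this case. I would also note that the uniqueness clause of Theorem~\ref{thm:main-I} plays no role in the corollary: the existence of a single object $\ulMO \in \ulMDM^{\eff}_k$ satisfying the stated $\Hom$-formula already suffices to conclude.
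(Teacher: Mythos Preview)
Your proposal is correct and matches the paper's own argument: the paper simply states that the corollary follows by taking $(\hX,\mX) = (X,\varnothing)$ in Theorem~\ref{thm:main-I}, and your write-up just makes explicit the two routine checks (empty divisor is normal crossings, $\sqrt{I}\otimes I^{-1}\cong\OO_X$) implicit in that one-line reduction.
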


The aim of this paper is to prove the same results for the Hodge sheaves $\Omega^q$, which are of course not $\AA^1$-homotopy invariant. 
More precisely, we construct a modulus Hodge sheaf $\ulMOmega^q$ for all non-negative integers $q$ by combining $\ulMO$ and the logarithmic Hodge sheaf. 
Our main result is the following. 

\begin{thm}\label{thm:main-II}
Suppose $\mathrm{char} (k)=0$. 
Then, for any non-negative integer $q$, there exists a unique object $\RMOmega^q \in \ulMDM^{\eff}_k$ such that for any $\sX = (\hX,\mX) \in \ulMCor$ with normal crossings we have 
\[
\Hom_{\ulMDM^{\eff}_k} (M(\sX),\RMOmega^q[n]) \cong H^n_{\Zar} (\hX,\ulMOmega^q_{\sX}),
\]
Here, the sheaf $\ulMOmega^q_{\sX}$ on the small site $\hX_{\Zar}$ is defined by 
\[
\ulMOmega^q_{\sX} := \Omega_X (\log |\mX|) \otimes \sqrt{I} \otimes I^{-1},
\]
where $I$ denotes the ideal sheaf defining $\mX \subset \hX$, and $\Omega_X (\log |\mX|)$ denotes the sheaf of differential forms having at most logarithmic poles along the support $|\mX|$ of the divisor $\mX$.
\end{thm}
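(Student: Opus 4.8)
The plan is to reduce Theorem~\ref{thm:main-II} to Theorem~\ref{thm:main-I} by a dévissage along the Hodge filtration, exploiting the short exact sequences relating the logarithmic Hodge sheaves in different degrees. First I would set up the candidate object: define $\RMOmega^q$ by an appropriate combination of $\ulMO$ (from Theorem~\ref{thm:main-I}) and the logarithmic differential forms sheaf $\Omega^q_{(-)}(\log)$, which is already known to be representable in $\ulMDM^{\eff}_k$ by the work establishing logarithmic motives / the reciprocity sheaf formalism. Concretely, one expects $\RMOmega^q$ to arise as (a twist or shift of) the tensor/convolution of the motivic object representing $\Omega^q(\log)$ with $\ulMO$, or—more robustly—as the object representing the sheaf $\ulMOmega^q_\sX = \Omega_X(\log|\mX|)\otimes\sqrt I\otimes I^{-1}$ directly once one checks this assignment defines a sheaf with transfers on $\ulMCor$ that has the right descent and modulus-invariance properties.

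The key steps, in order, are: (1) verify that $\sX\mapsto \ulMOmega^q_{\sX}$ underlies a (cube-invariant, with reciprocity) modulus presheaf with transfers — this is where one checks that pullback along admissible correspondences preserves the logarithmic poles and the $\sqrt I\otimes I^{-1}$ twist compatibly, using the normal crossings hypothesis to control $|\mX|$; (2) establish the relevant Zariski (or Nisnevich) descent and the semipurity/contraction properties so that the associated complex lives in $\ulMDM^{\eff}_k$ and computes the stated cohomology — here one leans on the machinery of \cite{kmsy1,kmsy2,kmsy3} exactly as in Part~I; (3) prove representability and uniqueness, i.e. produce $\RMOmega^q$ with $\Hom(M(\sX),\RMOmega^q[n])\cong H^n_{\Zar}(\hX,\ulMOmega^q_\sX)$, by mimicking the argument of Theorem~\ref{thm:main-I} with $\OO$ replaced by $\Omega^q(\log)$ throughout, the point being that both the log Hodge sheaf and $\sqrt I\otimes I^{-1}$ are already individually controlled. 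The uniqueness is formal once one knows $M(\sX)$ for $\sX$ with normal crossings generate $\ulMDM^{\eff}_k$ in the appropriate sense.

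The main obstacle I expect is step~(1): showing that the combined sheaf $\Omega_X(\log|\mX|)\otimes\sqrt I\otimes I^{-1}$ genuinely carries an action of modulus correspondences (transfers), and in particular that the modulus/admissibility condition interacts correctly with \emph{logarithmic} poles of differential forms. In Part~I one only had to track the structure sheaf twisted by $\sqrt I\otimes I^{-1}$; now the differential forms can acquire poles along $|\mX|$, and one must verify—likely via a local computation in normal crossings coordinates and a careful analysis of the trace/pushforward along finite correspondences—that the pullback of a form with log poles along $|\mX'|$, pulled back along an admissible correspondence into $\sX$, still lies in $\Omega_X(\log|\mX|)\otimes\sqrt I\otimes I^{-1}$. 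A secondary technical point will be checking that the filtration/extension relating $\ulMOmega^q$ for varying $q$ is compatible with the modulus structure, so that the dévissage can be run; but the heart of the matter is the transfer structure on the log-twisted Hodge sheaf.
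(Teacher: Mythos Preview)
Your headline strategy --- reduce to Theorem~\ref{thm:main-I} by d\'evissage and build $\RMOmega^q$ as a motivic tensor/convolution of $\ulMO$ with an already-known object representing $\Omega^q(\log)$ --- does not get off the ground. First, representability of $\Omega^q(\log)$ in $\ulMDM^{\eff}_k$ is not an available input: it is a \emph{corollary} of the theorem you are trying to prove (take $\mX = |\mX|$), so invoking it is circular. Second, even granting two objects $A,B\in\ulMDM^{\eff}_k$ representing sheaves $\mathcal{A},\mathcal{B}$, the motivic tensor $A\otimes B$ has no reason to represent the $\OO$-module tensor $\mathcal{A}\otimes_{\OO}\mathcal{B}$; the monoidal structure on $\ulMDM^{\eff}_k$ is governed by $\sX\otimes\sY$ on modulus pairs, not by tensor of quasi-coherent sheaves. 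Third, there is no natural short exact sequence in the motivic category expressing $\ulMOmega^q$ in terms of $\ulMOmega^{q-1}$ and something already controlled; the residue-type sequences (cf.\ Eq.~\eqref{equa:POmegaSeS1}--\eqref{equa:MOmegaSeS2}) vary the divisor, not $q$, and do not furnish the d\'evissage you sketch.

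Your fallback --- define $\RMOmega^q$ directly from the presheaf $\sX\mapsto\ulMOmega^q_\sX$ and verify the necessary properties --- is the route the paper actually takes, but you underestimate what ``mimicking Part~I'' entails. The argument is \emph{not} a formal substitution of $\Omega^q(\log)$ for $\OO$: the two substantive technical ingredients beyond transfers are (i) cube invariance of $H^n(\sX,\ulMOmega^q)$, proved via explicit decompositions of $\ulPOmega^q$ over $A[t]$, $A[t^{-1}]$, $A[t,t^{-1}]$ (\S\ref{sec:ciOmega}), and (ii) blow-up invariance $\ulMOmega^q_\sX\simeq Rf_*\ulMOmega^q_\sY$ for blow-ups in centres having normal crossings with $\mX$ (\S\ref{sec:biOmega}), which requires a careful cohomological computation on $\Bl_{\AA^n}\AA^T$ identifying the cokernel of $\pi^*\ulMOmega^q_\sA\to\ulMOmega^q_\sB$ with sheaves on the exceptional divisor of controlled twist. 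Neither follows from the $q=0$ case by a soft argument. Your proposal mentions cube invariance only in passing and omits blow-up invariance entirely; the ``semipurity/contraction'' properties you cite are not what is used here. Once (i) and (ii) are in hand, the conclusion follows from the general machinery (the colimit formula over abstract admissible blow-ups, resolution of singularities, and weak factorisation) exactly as you say in step~(3), and your identification of the transfer structure as the delicate point in step~(1) is accurate.
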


Notice that $\ulMOmega^q_{(\hX,m\mX)} \subset  \ulMOmega^q_{(\hX,(m+1)\mX)}$ for all $m \geq 1$ and 
\[
\colim_{m \geq 1} \ulMOmega^q_{(\hX,m\mX)} \xrightarrow{\sim} \Omega^q_{X},
\]
which means that $\ulMOmega^q$ gives an ``exhaustive filtration'' of $\Omega^q$.
In fact, we have an isomorphism of the same form between the cohomology groups:
\[
\colim_{m \geq 1} H^n_{\Zar} (\hX,\ulMOmega^q_{(\hX,m\mX)}) \xrightarrow{\sim} H^n_{\Zar} (X,\Omega^q_{X}).
\]

As before, taking $(\hX,\mX) = (X,\varnothing)$, we obtain the Hodge realisation functor:

\begin{cor}
Suppose $\mathrm{char} (k)=0$.
Then, for any non-negative integer $q$ and $n \in \Z$, we have 
\[
\Hom_{\ulMDM^{\eff}_k} (M(X,\varnothing),\RMOmega^q[n]) \cong H^n_{\Zar} (X,\Omega^q_{X}).
\]
\end{cor}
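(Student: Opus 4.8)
The plan is to obtain the corollary as the special case of Theorem~\ref{thm:main-II} in which the modulus is empty. First I would record that for $X \in \Sm$ the pair $\sX := (X,\varnothing)$ is an object of $\ulMCor$ — here it is essential that in $\ulMCor$ the ambient scheme is not required to be proper, only the open complement $X$ to be smooth — and that $\sX$ has normal crossings, the empty divisor being a (degenerate) normal crossings divisor on the smooth scheme $X$. Hence Theorem~\ref{thm:main-II} applies to $\sX$, and $M(\sX)$ is precisely the motive $M(X,\varnothing)$ appearing in the corollary.

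Next I would identify the sheaf $\ulMOmega^q_{(X,\varnothing)}$. The effective Cartier divisor cutting out $\varnothing\subset X$ is $0$, so its ideal sheaf is $I=\OO_X$; therefore $\sqrt{I}=\OO_X$ and $I^{-1}=\OO_X$, whence $\sqrt{I}\otimes I^{-1}=\OO_X$. Moreover the support $|\varnothing|$ is empty, so allowing at most logarithmic poles along it is no condition at all and $\Omega^q_X(\log|\varnothing|)=\Omega^q_X$. Combining these, $\ulMOmega^q_{(X,\varnothing)}=\Omega^q_X\otimes\OO_X\otimes\OO_X=\Omega^q_X$. Substituting $M(\sX)=M(X,\varnothing)$ and this computation into the isomorphism of Theorem~\ref{thm:main-II} gives
\[
\Hom_{\ulMDM^{\eff}_k}(M(X,\varnothing),\RMOmega^q[n])\cong H^n_{\Zar}(X,\Omega^q_X),
\]
which is the assertion. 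I would then remark that, since $\RMOmega^q$ is a fixed object of $\ulMDM^{\eff}_k$, the assignment $X\mapsto\Hom_{\ulMDM^{\eff}_k}(M(X,\varnothing),\RMOmega^q[\bullet])$ is automatically functorial in $X\in\Sm$, which is the sense in which one obtains a Hodge realisation.

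I do not expect any genuine obstacle: all the mathematical content lies in Theorem~\ref{thm:main-II}, and the corollary is pure bookkeeping. The only points needing a moment's care are the conventions governing the empty modulus ($I=\OO_X$, $\sqrt{\OO_X}=\OO_X$, the trivial line bundle $\OO_X^{-1}=\OO_X$, and logarithmic poles along $\varnothing$ being vacuous), together with the implicit convention that $\Omega_X(\log|\mX|)$ in the statement of Theorem~\ref{thm:main-II} denotes its degree-$q$ component $\Omega^q_X(\log|\mX|)$; once these are made explicit the identification is forced.
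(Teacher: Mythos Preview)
Your proposal is correct and follows exactly the paper's approach: the corollary is stated immediately after the line ``As before, taking $(\hX,\mX) = (X,\varnothing)$, we obtain the Hodge realisation functor'', with no further argument given. Your explicit unwinding of the conventions ($I=\OO_X$, $\sqrt{I}\otimes I^{-1}=\OO_X$, $\Omega^q_X(\log\varnothing)=\Omega^q_X$) is precisely the bookkeeping the paper leaves implicit.
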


Also, taking $(\hX,\mX) = (\hX,|\mX|)$, we obtain the logarithmic Hodge realisation functor:

\begin{cor}
Suppose $\mathrm{char} (k)=0$.
Then, for any non-negative integer $q$ and $n \in \Z$, we have 
\[
\Hom_{\ulMDM^{\eff}_k} (M(\hX,|\mX|),\RMOmega^q[n]) \cong H^n_{\Zar} (\hX,\Omega^q_{\hX} (\log |\mX|)).
\]
\end{cor}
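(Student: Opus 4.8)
The plan is to deduce this directly from Theorem~\ref{thm:main-II}, applied to the modulus pair whose underlying divisor has been made reduced. Fix a non-negative integer $q$ and let $\RMOmega^q\in\ulMDM^{\eff}_k$ be the object produced by that theorem. Given $(\hX,\mX)\in\ulMCor$ with normal crossings, I would first check that $(\hX,|\mX|)$ is again an object of $\ulMCor$ with normal crossings: this requires only that $\hX$ be smooth and that the support of the modulus be a normal crossings divisor, and since $|\mX|$ has the same support as $\mX$, both properties are inherited from the hypothesis on $(\hX,\mX)$. Applying the representability isomorphism of Theorem~\ref{thm:main-II} to $\sX=(\hX,|\mX|)$ then gives
\[
\Hom_{\ulMDM^{\eff}_k}\bigl(M(\hX,|\mX|),\RMOmega^q[n]\bigr)\cong H^n_{\Zar}\bigl(\hX,\ulMOmega^q_{(\hX,|\mX|)}\bigr).
\]

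The remaining step is to identify the sheaf $\ulMOmega^q_{(\hX,|\mX|)}$ on $\hX_{\Zar}$ with $\Omega^q_{\hX}(\log|\mX|)$. By definition it equals $\Omega^q_{\hX}(\log|\mX|)\otimes_{\OO_{\hX}}\sqrt{J}\otimes_{\OO_{\hX}}J^{-1}$, where $J\subseteq\OO_{\hX}$ is the ideal sheaf of $|\mX|$. Since $|\mX|$ is a \emph{reduced} effective Cartier divisor, $J$ equals its own radical, so $\sqrt{J}=J$; and $J$ is an invertible $\OO_{\hX}$-module, so the evaluation pairing $J\otimes_{\OO_{\hX}}J^{-1}\to\OO_{\hX}$ is an isomorphism. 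Hence $\sqrt{J}\otimes_{\OO_{\hX}}J^{-1}\cong\OO_{\hX}$, and therefore $\ulMOmega^q_{(\hX,|\mX|)}\cong\Omega^q_{\hX}(\log|\mX|)$; substituting into the displayed isomorphism proves the claim. Note that every reduced normal crossings divisor $D$ on a smooth variety $\hX$ arises as $|\mX|$ for a suitable modulus, for instance $\mX=D$, so this covers every instance of the statement.

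I do not expect any genuine obstacle here: all of the mathematical content sits in Theorem~\ref{thm:main-II}, which is being assumed, and the present corollary is a purely formal specialisation, exactly analogous to the way the (non-logarithmic) Hodge realisation corollary for $(X,\varnothing)$ is extracted by instead taking the empty modulus. The only points requiring (brief) care are the elementary observation that $(\hX,|\mX|)$ remains a normally-crossing modulus pair and the commutative-algebra identity $\sqrt{J}\otimes_{\OO_{\hX}}J^{-1}\cong\OO_{\hX}$, both of which are immediate; and the uniqueness clause of Theorem~\ref{thm:main-II} ensures that one and the same object $\RMOmega^q$ underlies all of these representability statements.
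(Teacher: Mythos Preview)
Your argument is correct and is exactly the specialisation the paper intends: it simply says ``taking $(\hX,\mX) = (\hX,|\mX|)$'' and leaves the identification $\ulMOmega^q_{(\hX,|\mX|)}\cong\Omega^q_{\hX}(\log|\mX|)$ implicit, which is precisely the computation you spelled out via $\sqrt{J}=J$.
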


Also, we can construct a monoidal ``de Rham realisation with modulus'' which extends the classical de Rham realisation functor. 
Let $\ulMDM^{\eff}_{\gm} \subset \ulMDM^{\eff}$ be the full subcategory consisting of compact objects (cf. \cite[Theorem 3.3.1]{kmsy3}). Recall from \cite[Lemma 6.3.3, Corollary 6.3.4]{kmsy3} that there exists a fully faithful monoidal functor $\DM^{\eff}_{\gm} \to \ulMDM^{\eff}_{\gm}$, under the existence of resolution of singularities. 

\begin{thm}[Theorem \ref{thm:monoidal}, Theorem \ref{thm:LWcomparison}, Remark \ref{rem:dervec}]\label{main:monoidal}
Suppose $\mathrm{char} (k)=0$.
Consider the triangulated functor
\[
R_{dR} : \ulMDM^{\eff,\op}_{\gm} \to D(\mathrm{Vect}_k) ; \quad M \mapsto \Hom_{\ulMDM^{\eff}}(-,\oplus_{q \geq 0}\RMOmega^q[-q]),
\]
where $D(\mathrm{Vect}_k)$ denotes the derived category of the category of vector spaces over $k$, equipped with the monoidal structure that is induced by the usual tensor product of $k$-vector spaces. 
Then $R_{dR}$ is monoidal, and the composite 
\[
\DM^{\eff,\op}_{\gm} \to \ulMDM^{\eff,\op}_{\gm} \xrightarrow{R_{dR}} D(\mathrm{Vect}_k)
\]
coincides with the de Rham realisation functor.
\end{thm}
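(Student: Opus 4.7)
The plan is to equip $\bigoplus_{q \geq 0} \RMOmega^q[-q]$ with the structure of a commutative algebra in $\ulMDM^{\eff}_k$, coming from the wedge product of differential forms; this induces a lax monoidal structure on $R_{dR}$, which we then upgrade to a strong monoidal structure on the compact part via a Künneth isomorphism. The comparison with the classical de Rham realisation on $\DM^{\eff}_{\gm}$ is treated separately and reduces to the case of smooth projective varieties, where Hodge-to-de Rham degeneration applies.

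To construct the multiplication maps $\mu_{p,q}\colon \RMOmega^p[-p] \otimes \RMOmega^q[-q] \to \RMOmega^{p+q}[-p-q]$, I would use the representability of Theorem \ref{thm:main-II}: it suffices to produce functorial pairings on cohomology for NCD modulus pairs. The key sheaf-theoretic input is the identification
\[
\ulMOmega^r_{\sX \otimes \sY} \;\cong\; \bigoplus_{p+q=r} \mathrm{pr}_1^* \ulMOmega^p_\sX \otimes \mathrm{pr}_2^* \ulMOmega^q_\sY
\]
on $\hX \times \hY$, which for NCD pairs follows from the multiplicativities $I_{\sX \otimes \sY} = \mathrm{pr}_1^* I_\sX \cdot \mathrm{pr}_2^* I_\sY$ and $\sqrt{I_{\sX \otimes \sY}} = \mathrm{pr}_1^* \sqrt{I_\sX} \cdot \mathrm{pr}_2^* \sqrt{I_\sY}$, together with the standard product decomposition of logarithmic Hodge sheaves. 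Associativity, graded commutativity, and the unit axiom reduce to their sheaf-level counterparts.

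For strong monoidality on compact objects, I would combine the identification above with the classical Künneth formula for Zariski cohomology of coherent sheaves on products of quasi-compact separated schemes to obtain an isomorphism on cohomology, then invoke density of motives of NCD modulus pairs in $\ulMDM^{\eff}_{\gm}$. For the comparison with the classical de Rham realisation, full faithfulness of $\DM^{\eff}_{\gm} \hookrightarrow \ulMDM^{\eff}_{\gm}$ combined with the generation of $\DM^{\eff}_{\gm}$ by smooth projective varieties (using resolution of singularities in characteristic zero) reduces the claim to smooth projective $X$. For such $X$, plugging $\mX = \varnothing$ into Theorem \ref{thm:main-II} yields $\bigoplus_{p+q=n} H^p(X, \Omega^q_X)$ in cohomological degree $n$, which is canonically $H^n_{dR}(X/k)$ by Hodge-to-de Rham degeneration; both multiplicative structures descend from the wedge product of forms, so they agree.

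The main obstacle will be promoting the sheaf-level wedge product to a genuine strictly associative, graded-commutative algebra structure on $\bigoplus_q \RMOmega^q[-q]$ inside the triangulated category $\ulMDM^{\eff}_k$, as opposed to a mere collection of compatible pairings on Hom groups. In particular, one must verify that the $\mu_{p,q}$ respect the admissibility conditions defining $\ulMCor$ so that they descend through the localisations used to construct $\ulMDM^{\eff}$, and track signs and coherence data carefully enough to obtain a bona fide $E_\infty$-structure (or at least whatever level of structure is needed for monoidality of $R_{dR}$). The remaining components—the sheaf-level Künneth formula and the reduction to smooth projective varieties—should be essentially classical.
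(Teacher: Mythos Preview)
Your proposal is essentially correct in its ingredients and overall shape, but you are making the monoidality part harder than the paper does. The paper does \emph{not} build a commutative (let alone $E_\infty$) algebra object $\bigoplus_q \RMOmega^q[-q]$ in $\ulMDM^{\eff}$; it bypasses that entirely. Instead, it observes that for any $M,N\in\ulMDM^{\eff}_{\gm}$ there is a natural comparison map $\mu_{M,N}\colon R_{dR}(M)\otimes_k R_{dR}(N)\to R_{dR}(M\otimes N)$, and then checks that $\mu_{M,N}$ is an isomorphism by d\'evissage: pass to $K^b(\ulMCor)$ via the equivalence $\ulMDM^{\eff}_{\gm}\simeq [K^b(\ulMCor)/\langle\mathrm{CI},\mathrm{MV}\rangle]^\natural$, induct on the total length of the two complexes using distinguished triangles and the five lemma, and reduce the base case to single affine normal crossings modulus pairs, where your K\"unneth identity $\ulMOmega^\bullet(\sX)\otimes_k\ulMOmega^\bullet(\sY)\cong\ulMOmega^\bullet(\sX\otimes\sY)$ is exactly what is checked. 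So the ``main obstacle'' you flag (promoting the wedge product to a coherent algebra structure inside $\ulMDM^{\eff}$) simply never arises; your sheaf-level K\"unneth formula is the whole computational content.

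For the comparison with Lecomte--Wach, your reduction to smooth projective $X$ and Hodge--de Rham degeneration is precisely the paper's argument. One point you leave implicit and the paper makes explicit: to get a natural isomorphism of functors on $\mathbf{ProjCor}$ (not just an isomorphism object by object) one must know that the Hodge--de Rham spectral sequence is functorial for finite correspondences. The paper handles this by taking a Cartan--Eilenberg resolution of $\Omega^\bullet$ in $\NST$, so that the resulting spectral sequence is manifestly functorial on $\Cor$; you should add this step.
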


As an immediate consequence of Theorem \ref{main:monoidal}, we obtain the following corollary. 
We hope that it will give us some hints about which object should be inverted to obtain a category of motives with modulus admitting duality.  

\begin{cor}[Corollary \ref{cor:dualisable}] \label{main:dualisable}
Let $T \in \ulMDM^{\eff}_{\gm}$ be an object such that $R_{dR}$ factors through $\ulMDM^{\eff}_{\gm}[T^{-1}]$.  
Then $R_{dR}(T)$ is one dimensional. 
If $M \in \ulMDM^{\eff}_{\gm}$ is an object such that $R_{dR}$ factors through $\ulMDM^{\eff}_{\gm}[T^{-1}]$ and $M$ is dualisable in $\ulMDM^{\eff}_{\gm}[T^{-1}]$. Then $R_{dR}(M)$ is finite dimensional. 
\end{cor}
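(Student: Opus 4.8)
The plan is to deduce the corollary formally from the monoidality of $R_{dR}$ established in Theorem~\ref{main:monoidal}, together with two standard facts about the target category $D(\mathrm{Vect}_k)$. I do not expect any serious difficulty: the argument is essentially the general yoga of $\otimes$-invertible and dualisable objects under monoidal functors.

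First I would record the two facts about $D(\mathrm{Vect}_k)$ that drive the proof. Since $k$ is a field every complex is formal, i.e.\ quasi-isomorphic to $\bigoplus_n H^n(C)[-n]$, and from this I would extract: (i) an object $C$ is $\otimes$-invertible in $D(\mathrm{Vect}_k)$ precisely when $C \simeq k[n]$ for a (necessarily unique) integer $n$ --- one checks this by comparing the smallest and largest nonzero cohomological degrees of $C$ and of a tensor inverse; and (ii) $C$ is dualisable precisely when it is a perfect complex, equivalently when $\bigoplus_n H^n(C)$ is a finite-dimensional $k$-vector space. This fixes the meaning of the statement: ``$R_{dR}(T)$ is one dimensional'' will mean $R_{dR}(T)\simeq k[n]$ for some $n$, and ``$R_{dR}(M)$ is finite dimensional'' will mean finite total cohomology.

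Next I would upgrade the hypothesised factorisation to a monoidal one. The localisation functor $L\colon \ulMDM^{\eff}_{\gm}\to\ulMDM^{\eff}_{\gm}[T^{-1}]$ is symmetric monoidal and $L(T)$ is $\otimes$-invertible by construction, while every object of the localisation has the form $L(M)\otimes L(T)^{\otimes -m}$. Using this description, the monoidal structure on $R_{dR}$ from Theorem~\ref{main:monoidal} transports along the factorisation $R_{dR}=\overline R_{dR}\circ L$ to make $\overline R_{dR}$ a symmetric monoidal functor on $\ulMDM^{\eff}_{\gm}[T^{-1}]$ (equivalently, one invokes the universal property of $\otimes$-inverting the symmetric object $T$). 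I would also note that $\otimes$-invertibility and dualisability of an object are unchanged on passing to the opposite monoidal category, with inverse, resp.\ dual, the same object, so the $(-)^{\op}$ in the source of $R_{dR}$ is harmless.

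Finally the two assertions drop out: a symmetric monoidal functor preserves $\otimes$-invertible objects and dualisable objects, so $R_{dR}(T)=\overline R_{dR}(L(T))$ is $\otimes$-invertible in $D(\mathrm{Vect}_k)$ and by (i) is $\simeq k[n]$, hence one dimensional; and if $L(M)$ is dualisable in $\ulMDM^{\eff}_{\gm}[T^{-1}]$ then $R_{dR}(M)=\overline R_{dR}(L(M))$ is dualisable in $D(\mathrm{Vect}_k)$ and by (ii) is finite dimensional. The only step that is more than pure formalism is the promotion of the factorisation to a monoidal one in the previous paragraph; I expect this --- rather than anything about duality or about $D(\mathrm{Vect}_k)$ --- to be the one point that must be pinned down carefully, and its treatment will depend on the precise model chosen for the localisation $\ulMDM^{\eff}_{\gm}[T^{-1}]$.
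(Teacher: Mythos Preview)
Your proposal is correct and follows essentially the same route as the paper: the paper's proof simply asserts that, by assumption, $R_{dR}(T)$ (resp.\ $R_{dR}(M)$) is invertible (resp.\ dualisable) in $D(\mathrm{Vect}_k)$, and then invokes the characterisation of invertible (resp.\ dualisable) graded $k$-vector spaces as one-dimensional (resp.\ finite-dimensional). Your write-up is more explicit than the paper's about why the factored functor $\overline{R}_{dR}$ inherits a monoidal structure---a point the paper leaves implicit in the phrase ``by assumption''---but the underlying argument is the same.
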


The strategy of the proof goes in parallel with the one in \cite{KelMi1}. 
We first define presheaves $\ulMOmega^q$ on $\ulPSm_k$ and show that they are quasi-coherent \'etale sheaves (\S \ref{sec:MOmega}), admitting the action of correspondences (\S \ref{sec:MOmega-transfer}). 
Next we prove that their cohomology groups are cube invariant (\S \ref{sec:ciOmega}) and blow-up invariant (\S \ref{sec:biOmega}).
We also recall some basic terminologies and the construction of the modulus structure sheaf $\ulMO$ in \S \ref{sec:review-MO}.
In the final section \S \ref{sec:Hodge-Real}, we construct a modulus Hodge realisation. 
Moreover, we construct an extension of the de Rham realisation of Voevodsky's mixed motives, and prove that it is a monoidal functor.

\subsection*{Notation and Convention}

Once and for all we fix a field $k$. 
We write $\Sch = \Sch_k$ for the category of separated and of finite type schemes over $k$, and $\Sm = \Sm_k$ for the full subcategory consisting of smooth, separated, and of finite type schemes over $k$. 
We write $\Cor = \Cor_k$ for the category of finite correspondences over $k$ from \cite{VoeTri}.

\subsection*{Acknowledgements}
We thank Junnosuke Koizumi and Shuji Saito for pointing out that one can avoid the use of weak factorisation by using strong resolution of singularities, as explained in Remark \ref{rem:strong-res}.

\section{Review of $\ulMO$}\label{sec:review-MO}

In this section, we briefly recall the construction of the modulus structure sheaf $\ulMO$ from \cite{KelMi1} which we will use in the construction of the modulus Hodge sheaf. 
Throughout this paper, we will freely use the definitions and notations introduced in \cite{KelMi1}.
However, we recall some basic notions here for the reader's convenience. One can find a concise summary of the general theory in \cite{KelMi1}. For more detail, see \cite{kmsy1}, \cite{kmsy2}, \cite{kmsy3}, and \cite{KM21}.

\begin{defi}
A \emph{modulus pair} is a pair $\sX = (\hX,\mX)$ consisting of $\hX \in \Sch$ and an effective Cartier divisor $\mX \subset \hX$ such that $\iX := \hX \setminus |\mX| \in \Sm$. We call $\iX$ the \emph{interior} of $\sX$.
An \emph{ambient morphism} $(\hX,\mX) \to (\hY,\mY)$ is a morphism $f : \hX \to \hY$ in $\Sch$ with $f(\iX) \subset \iY$ and $\mX \geq f^* \mY$.
We write $\ulPSm = \ulPSm_k$ for the category consisting of modulus pairs and ambient morphisms. 
When we have $\mX = f^* \mY$, we say that the ambient morphism $f$ is \emph{minimal}.
\end{defi}

The idea of the theory of motives with modulus is to replace smooth schemes by modulus pairs, and $\AA^1$-homotopy invariance by $\bcube$-invariance, where $\bcube := (\PP^1,\infty)$ is a modulus pair that we call the \emph{cube}.
A very important property of the affine line $\AA^1$ is the existence of the multiplication map $\mu : \AA^1 \times \AA^1 \to \AA^1; (s,t) \mapsto st$. 
Unfortunately, one can only extend $\mu$ to a rational map $\PP^1 \times \PP^1 \dashrightarrow \PP^1$, not an entire morphism. 
This means that one do not have a multiplication map on $\bcube$ in the category $\ulPSm$. 
The above is one of the main reasons why we have to invert ``abstract admissible blow-ups'' in $\ulPSm$ to obtain a good category of modulus pairs $\ulMSm$.

\begin{defi}
An \emph{abstract admissible blow-up} is a morphism $f:(\hX,\mX) \allowbreak \to (\hY,\mY)$ in $\ulPSm$ with $\mX = f^*\mY$ such that $f : \hX \to \hY$ is proper, surjective, and an isomorphism over the interior $\iY \subset \hY$. 
Write $\Sigma$ for the class of abstract admissible blow-ups. 
We define $\ulMSm = \ulMSm_k := \ulPSm [\Sigma^{-1}]$.
\end{defi}

\begin{rema}
One can easily see that $\Sigma$ enjoys the calculus of right fraction, which implies that any morphism in $\ulMSm$ is represented by a diagram $\sX \xleftarrow{s} \sX' \xrightarrow{f} \sY$, where $s \in \Sigma$, and $f$ is an ambient morphism. 
\end{rema}

Thanks to the inversion of abstract admissible blow-ups, one can show that $\ulMSm$ admits categorical product $\times$, which is not the case for $\ulPSm$ (For the construction of the categorical product (more generally, fibre product), see \cite{kmsy1}, \cite{nistopmod}, \cite{KM21}).
However, we often use a different notion of product of modulus pairs. 

\begin{defi}
A \emph{tensor product} $\sX \otimes \sY$ of two modulus pairs $\sX$ and $\sY$ is defined by 
\[
\sX \otimes \sY := (\hX \times \hY, \mX \times \hY + \hX \times \mY).
\]
\end{defi}

\begin{rema}
The natural morphism $\sX \otimes \sY \to \sX \times \sY$ induced by the universal property of the categorical product is not an isomorphism in general. 
For example, the multiplication map $\bcube \otimes \bcube \to \bcube$ is well-defined in $\ulMSm$ but $\bcube \times \bcube \to \bcube$ is not.
\end{rema}

In the following, we frequently use the notion of normal crossing divisors. 
We introduce some convenient terminologies (cf. \cite[Definition A.1]{KelMi1}).

\begin{defi} \label{defi:NC}
Let $\sX$ be a modulus pair and $Z \subseteq \hX$ a closed subscheme. 
We will say that \emph{$Z$ has strict normal crossings with $\mX$} if for every point $x \in \hX$ the local ring $\OO_{\hX, x}$ is regular, and there exists a regular system of parameters $t_1, ..., t_n \in \OO_{\hX, x}$ such that
\[
\mX|_{\Spec(\OO_{\hX, x})} = \prod_{a \in A} t_a^{r_a}, \quad \textrm{ and } \quad Z|_{\Spec(\OO_{\hX, x})} = \Spec(\mathcal{O}_{\hX,x} / \langle t_b : b \in B \rangle)
\]
for some $r_a > 0$ and $A, B \subseteq \{1, ..., n\}$ (cf.\cite[00KU]{stacks-project}).

We will say that \emph{$Z$ has normal crossings with $\mX$} if there exists an \'etale covering $\hV \to \hX$ such that $Z \times_{\hX} \hV$ has strict normal crossings with $\mV$.
We say that $\sX$ is a \emph{normal crossings modulus pair} if $\varnothing$ has normal crossings with $\mX$.
\end{defi}

We briefly recall the sheaf of modulus global sections from \cite{KelMi1}.
For any ring $A$ and a nonzero divisor $f \in A$, set 
\[
\ulMO (A, f) := \{ a/f \in A[f^{-1}] : a \in \sqrt{(f)} \subseteq A \},
\]
where $\sqrt{(-)}$ denotes the radical of an ideal. We proved in \cite[Theorem 3.7]{KelMi1} that this local definition is extended to a global one by a standard patching argument:

\begin{theo} \label{theo:MOdefi}
There is a unique $\fppf$-sheaf $\ulMO$ on $\ulPSm_k$ such that for any modulus pair $\sX = (\hX,\mX)$ over $k$ with $\hX = \Spec A$ and $\mX = \Spec (A/fA)$ for some ring $A$ and a nonzero divisor $f \in A$, we have $\ulMO(\sX) = \ulMO(A, f)$. Furthermore, this is quasi-coherent as an {\'e}tale sheaf. In particular, its Zariski, Nisnevich, and {\'e}tale cohomologies agree, and vanish for affines.
\end{theo}

\begin{rema}
The sheaf we actually want to work with is $\sX \mapsto \ulMO(\sX^{\ic})$. Here, given a qcqs modulus pair $\sX = (\hX, \mX)$ (i.e., $\hX$ is a qcqs scheme, $\iX$ is an effective Cartier divisor) we write $\sX^{\ic} = ((\hX)^{\ic}, \mX|_{(\hX)^{\ic}})$ where $(\hX)^{\ic} = \underline{\Spec} (\OO_{\hX}^{\ic})$ and $\OO_{\hX}^{\ic}$ is the integral closure of $\OO_{\hX}$ in $\OO_{\iX}$.  More precisely, one can show that $\ulMO((-)^{\ic})$ is the $\ulMZar$-sheafification of $\ulMO$, cf.\cite[Lem.4.2, Rem.8.2]{KelMi1}. 
However, since we are basically interested in normal crossing modulus pairs in the present paper, the reader does not have to be bothered by this subtlety. 
\end{rema}

\section{The modulus Hodge sheaves}\label{sec:MOmega}

In this section, we construct the modulus Hodge sheaves $\ulMOmega^q$ on $\ulMSm$ by tensoring the modulus structure sheaf $\ulMO$ and the logarithmic Hodge sheaf. 
We also study some fundamental properties of them.

\subsection{The sheaf of modulus K\"ahler differentials $\protect\ulMOmega$}

Throughout this section, we continue with our fixed base field $k$.

\begin{defi}\label{def:PandMOmega}
Let $k \to A$ a $k$-algebra and $f \in A$ a nonzero divisor. 
We write $\ulPOmega^*_{(A,f)/k}$ for the smallest sub-dg-algebra of $\Omega^*_{A[f^{-1}]/k}$ containing $A=\Omega^0_{A/k}$ and $\dlog(a) := \frac{da}{a}$ for $a \in A \cap A[f^{-1}]^*$, and define
\[
\ulMOmega^q_{(A,f)/k} = \ulMO_{(A,f)} \cdot \ulPOmega^q_{(A,f)/k},
\]
still as a submodule of $\Omega^q_{A[f^{-1}]/k}$ (cf. Lemma~\ref{lem:MOmega-otimes}).
Note that $\ulPOmega_{(A,f)/k}^0=A$ and $\ulMOmega_{(A,f)/k}^0=\ulMO{(A, f)} \cdot A = \ulMO{(A, f)}$.
\end{defi}

\begin{remark}
For simplicity of notation, we often write
\[
\ulPOmega^*_{(A,f)} := \ulPOmega^*_{(A,f)/k}
\]
if there is no risk of confusion on the base.
\end{remark}

\begin{remark} \label{rema:sumPOmega}
By definition, elements of $\ulPOmega^q_{(A,f)}$ are sums of the form
\[ \sum_{i = 1}^N a_{i} \dlog(b_{i,1}) \wedge \dots \wedge \dlog(b_{i,n_i}) \wedge d(c_{i,1}) \wedge \dots \wedge d(c_{i, m_i}) \]
where $a_i, c_{i,j} \in A, b_{i,j} \in A \cap A[f^{-1}]^*$ and $n_i + m_i = q$. Elements of $\ulMOmega^q_{(A,f)}$ are the same but we allow $a_i \in \ulMO_{(A,f)}$.
\end{remark}

\begin{exam}\label{ex:locfreeomega}
If $A = \QQ[x_1, \dots, x_n]$ regarded as a $\mathbb{Q}$-algebra and $f = x_1^{r_1}\dots x_i^{r_i}$ with $r_1, \dots, r_i > 0$, then 
\begin{align*}
\ulPOmega_{(A, f)}^1 
&= 
\left ( \bigoplus_{j = 1}^i \QQ[x_1, \dots, x_n] \cdot \tfrac{dx_j}{x_j} \right ) 
\oplus 
\left ( \bigoplus_{j = i+1}^n \QQ[x_1, \dots, x_n] \cdot dx_j \right ) \\
&= \Omega^1_{\hX}(\log \mX)
\end{align*}and 
\begin{align*}
\ulMOmega_{(A, f)}^1 
&= 
\left ( \bigoplus_{j = 1}^i \QQ[x_1, \dots, x_n] \cdot \tfrac{1}{x_1^{r_1{-}1}\dots x_i^{r_i{-}1}} \cdot \tfrac{dx_j}{x_j} \right ) 
\\&\oplus 
\left ( \bigoplus_{j = i+1}^n \QQ[x_1, \dots, x_n] \cdot \tfrac{1}{x_1^{r_1{-}1}\dots x_i^{r_i{-}1}} \cdot dx_j \right ) \\
&= \Omega^1_{\hX}(\log \mX)(\mX {-} |\mX|)
\end{align*}
and for general $q \geq 0$ 
\[ \ulPOmega_{(A, f)}^q = \Omega^q_{\hX}(\log \mX) \]
and
\[ \ulMOmega_{(A, f)}^q = \Omega^q_{\hX}(\log \mX)(\mX {-} |\mX|) \]
where $(\hX, \mX) = \Spec(\QQ[x_1, \dots, x_n], x_1^{r_1}\dots x_i^{r_i})$.
cf.\cite[Cor.6.8]{RS18}, \cite{IY17}.
\end{exam}

\begin{exam} \label{exam:sncOmegaFree}
Similarly, if $A$ is a localisation of a smooth $k$-algebra at a maximal ideal $\m$ (so in particular, $A$ is a regular local ring, \cite[07R9,038T]{stacks-project}), we choose a regular system of parameters $x_1, \dots, x_n \in \m$, \cite[12.J]{Mat80}, and set $f = x_1^{r_1}\dots x_i^{r_i}$ with $r_1, \dots, r_i > 0$ then we have similar expressions. 

\begin{align*}
\ulPOmega_{(A, f)}^1 
&= 
\left ( \bigoplus_{j = 1}^i A \cdot \tfrac{dx_j}{x_j} \right ) 
\oplus 
\left ( \bigoplus_{j = i+1}^n A \cdot dx_j \right ) \\
&= \Omega^1_{\hX}(\log \mX)
\end{align*}and 
\begin{align*}
\ulMOmega_{(A, f)}^1 
&= 
\left ( \bigoplus_{j = 1}^i A \cdot \tfrac{1}{x_1^{r_1{-}1}\dots x_i^{r_i{-}1}} \cdot \tfrac{dx_j}{x_j} \right ) 
\oplus 
\left ( \bigoplus_{j = i+1}^n A \cdot \tfrac{1}{x_1^{r_1{-}1}\dots x_i^{r_i{-}1}} \cdot dx_j \right ) \\
&= \Omega^1_{\hX}(\log \mX)(\mX {-} |\mX|)
\end{align*}
and for general $q \geq 0$ 
\[ \ulPOmega_{(A, f)}^q = \Omega^q_{\hX}(\log \mX) \]
and
\[ \ulMOmega_{(A, f)}^q = \Omega^q_{\hX}(\log \mX)(\mX {-} |\mX|) \]
where $(\hX, \mX) = \Spec(A, x_1^{r_1}\dots x_i^{r_i})$.
Indeed, by hypothesis $\Omega^1_A$ and $\Omega^1_{A[f^{-1}]}$ are free with basis $dx_1, \dots, dx_n$,
so it suffices to check that $A \cap A[f^{-1}]^* = A^* \times x_1^{\ZZ} \times \dots \times x_i^{\ZZ}$. This follows from the fact that $A$ (and $A[f^{-1}]$) is a UFD \cite[0AG0]{stacks-project} and the prime factors of $f$ are $x_1, \dots, x_i$.

In particular, suppose that $\hX$ is smooth and affine, and suppose $\mX = \sum_{i = 0}^j n_i D_i$ with $n_i > 0$ and $\mX$ has strict normal crossings support. Then there are short exact sequences of $\OO_{\hX}$-modules:
\begin{align}
\ulPOmega^q_{(\hX, \mX{-}D_0)} &= \ulPOmega^q_{(\hX, \mX)} ,\qquad 
 & n_0 > 1 \label{equa:POmegaSeS1} \\
0 \to \ulPOmega^q_{(\hX, \mX{-}D_0)} &\to \ulPOmega^q_{(\hX, \mX)} \to 
i_*\Omega^{q-1}_{D_0} 
\to 0 ,
 & n_0 = 1  \label{equa:POmegaSeS2} 
\end{align}
and
\begin{align}
0 \to \ulMOmega^q_{(\hX, \mX{-}D_0)} &\to \ulMOmega^q_{(\hX, \mX)} \to i_*i^*\ulPOmega^q_{(\hX, \mX)} \to 0,
 & n_0 > 1  \label{equa:MOmegaSeS1} \\
0 \to \ulMOmega^q_{(\hX, \mX{-}D_0)} &\to \ulMOmega^q_{(\hX, \mX)} \to 
\ulMO_{\mX} \otimes i_*\Omega^{q-1}_{D_0} 
\to 0,
 & n_0 = 1 \label{equa:MOmegaSeS2}
\end{align}
where $i:D_0 \to X$ is the inclusion.
\end{exam}

\begin{rema} \label{rema:ulMZarification}
Ideally, we would like to reserve the symbol $\ulMOmega^*$ for the $\ulMZar$-sheafification of the $\ulMOmega^*$ defined above. However, in the presence of resolution of singularities and weak factorisation, for modulus pairs with regular total space and modulus with normal crossing support the above definition already captures the $\ulMZar$-sheafification. So we just work with that in this article.
\end{rema}

\begin{lemm}[Functoriality] \label{lemm:ulPOmegaFunct}
Let $\phi: A \to B$ be a $k$-algebra homomorphism, and $f \in A$, $g \in B$ nonzero divisors. 
Assume that $\phi(f)$ divides $g$ in $B$.
Then the canonical morphism $A[f^{-1}] \to B[g^{-1}]$ induces a morphism of graded $A$-modules
\[ \ulPOmega^*_{(A,f)} \to \ulPOmega^*_{(B,g)}. \] 
\end{lemm}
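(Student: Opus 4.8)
The plan is to use the description of $\ulPOmega^*_{(A,f)}$ from Remark \ref{rema:sumPOmega} as the sub-$A$-module of $\Omega^*_{A[f^{-1}]/k}$ spanned by wedge products of $\dlog(b)$'s with $b \in A \cap A[f^{-1}]^*$ and $d(c)$'s with $c \in A$, and to track where such generators go under the canonical base-change map on K\"ahler differentials. First I would record the obvious commutative square of $k$-algebras
\[
\begin{array}{ccc}
A & \longrightarrow & A[f^{-1}] \\
\downarrow & & \downarrow \\
B & \longrightarrow & B[g^{-1}]
\end{array}
\]
which exists precisely because $\phi(f)$ divides $g$ (so $\phi(f)$ becomes a unit in $B[g^{-1}]$, giving the unique dashed arrow $A[f^{-1}] \to B[g^{-1}]$ by the universal property of localisation). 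This induces the pullback map on differential forms $\Omega^*_{A[f^{-1}]/k} \to \Omega^*_{B[g^{-1}]/k}$, a homomorphism of dg-$k$-algebras which is $A$-linear for the $A$-module structure on the target coming from $A \to B \to B[g^{-1}]$.

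The key step is then to check that this map sends $\ulPOmega^*_{(A,f)}$ into $\ulPOmega^*_{(B,g)}$. Since $\ulPOmega^*_{(A,f)}$ is generated as a sub-dg-algebra by $\Omega^0_{A/k} = A$ and by the elements $\dlog(a)$ for $a \in A \cap A[f^{-1}]^*$, and since the pullback map is a dg-algebra homomorphism, it suffices to check the images of these generators lie in $\ulPOmega^*_{(B,g)}$. An element of $A$ maps into $B = \ulPOmega^0_{(B,g)}$, and for $a \in A \cap A[f^{-1}]^*$ the element $\dlog(a)$ maps to $\dlog(\phi(a))$; so we must verify $\phi(a) \in B \cap B[g^{-1}]^*$. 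Certainly $\phi(a) \in B$; and $a$ being a unit in $A[f^{-1}]$, its image $\phi(a)$ is a unit in $B[g^{-1}]$ by functoriality of the unit group along $A[f^{-1}] \to B[g^{-1}]$. Hence $\dlog(\phi(a)) \in \ulPOmega^1_{(B,g)}$ by definition, and since $\ulPOmega^*_{(B,g)}$ is closed under $d$ and $\wedge$, all the generators — hence all of $\ulPOmega^*_{(A,f)}$ — land in $\ulPOmega^*_{(B,g)}$. Finally I would note that the resulting map $\ulPOmega^*_{(A,f)} \to \ulPOmega^*_{(B,g)}$ is a morphism of graded $A$-modules because the ambient pullback $\Omega^*_{A[f^{-1}]/k} \to \Omega^*_{B[g^{-1}]/k}$ already is (with target viewed as an $A$-module via $\phi$), and it is canonical because it is the restriction of the canonical pullback on K\"ahler differentials.

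There is essentially no serious obstacle here: the only thing that needs genuine (if minor) attention is the well-definedness of the localisation map $A[f^{-1}] \to B[g^{-1}]$, i.e.\ the point where the divisibility hypothesis $\phi(f) \mid g$ is used, and the trivial-but-worth-stating fact that this map carries $A \cap A[f^{-1}]^*$ into $B \cap B[g^{-1}]^*$. Everything else is formal manipulation of dg-algebra generators. (Implicitly this also shows the construction is functorial in composable pairs, which will be needed later, but that is an immediate consequence of functoriality of pullback of differential forms.)
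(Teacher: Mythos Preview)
Your proposal is correct and is exactly the paper's approach, only spelled out in full: the paper's one-line proof ``Follows directly from the definition and the functoriality of $A \cap A[f^{-1}]^*$'' is precisely your argument that the canonical map $A[f^{-1}] \to B[g^{-1}]$ carries $A \cap A[f^{-1}]^*$ into $B \cap B[g^{-1}]^*$, so the dg-algebra generators of $\ulPOmega^*_{(A,f)}$ land in $\ulPOmega^*_{(B,g)}$.
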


\begin{proof}
Follows directly from the definition and the functoriality of $A \cap A[f^{-1}]^*$.
\end{proof}

\begin{lemm}[Filtered colimits] \label{lemmulPOmegaColim}
If $(A_\lambda)_{\lambda \geq 0}$ is a filtered system of $k$-algebras, $f_0 \in A_0$ is a nonzero divisor such that the images $f_\lambda \in A_\lambda$ are also nonzero divisors, then
\[ \varinjlim \ulPOmega^*_{(A_\lambda,f_\lambda)} \stackrel{\cong}{\to} \ulPOmega^*_{(A,f)}. \]
where $A = \varinjlim A_\lambda$ and $f$ is the image of the $f_0$ in $A$.
\end{lemm}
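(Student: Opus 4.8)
The plan is to reduce the claim to three standard facts: localisation commutes with filtered colimits, $\Omega^*_{-/k}$ commutes with filtered colimits, and filtered colimits of modules are exact. Write $B_\lambda := A_\lambda[f_\lambda^{-1}]$, so that $\varinjlim B_\lambda = A[f^{-1}] =: B$. By Lemma~\ref{lemm:ulPOmegaFunct} the transition maps $A_\lambda \to A_\mu$ (for $\lambda \le \mu$) induce a filtered system of inclusions $\ulPOmega^*_{(A_\lambda,f_\lambda)} \hookrightarrow \Omega^*_{B_\lambda/k}$, compatible with the transition maps of $\Omega^*_{B_\lambda/k}$. Passing to the colimit and using $\varinjlim \Omega^*_{B_\lambda/k} = \Omega^*_{B/k}$ together with exactness of filtered colimits, we obtain an injection $\varinjlim \ulPOmega^*_{(A_\lambda,f_\lambda)} \hookrightarrow \Omega^*_{B/k}$; checking on the generators of Remark~\ref{rema:sumPOmega} shows its image is contained in the sub-dg-algebra $\ulPOmega^*_{(A,f)}$, and this factorisation is the map in the statement. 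In particular that map is automatically injective.

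It then remains to prove surjectivity. By Remark~\ref{rema:sumPOmega} an element of $\ulPOmega^q_{(A,f)}$ is a finite sum of terms of the shape $a\,\dlog(b_1)\wedge\cdots\wedge\dlog(b_n)\wedge dc_1\wedge\cdots\wedge dc_m$ with $a,c_j \in A$, $b_j \in A \cap B^*$, $n+m=q$. As the sum is finite and $A = \varinjlim A_\lambda$, we may fix an index $\lambda$ such that $f$ and all of the elements $a$, $c_j$, $b_j$ occurring are images of elements of $A_\lambda$. The only subtlety is to arrange that each $b_j$ lifts to an element of some $A_\mu$ ($\mu \ge \lambda$) that is \emph{already} a unit in $B_\mu$: choosing $b'_j \in A$ and $N_j \ge 0$ with $b_j b'_j = f^{N_j}$ in $B$, and clearing denominators, one gets a relation $f^{M_j} b_j b'_j = f^{M_j + N_j}$ in $A$ for some $M_j \ge 0$; since this is a relation in a filtered colimit, it already holds in $A_\mu$ for $\mu$ large, and inverting $f_\mu$ there yields $b_j b'_j = f_\mu^{N_j}$ in $B_\mu$, i.e.\ $b_j \in A_\mu \cap B_\mu^*$. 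Enlarging $\mu$ once more to absorb the finitely many indices $j$, the original element is visibly the image of the corresponding sum in $\ulPOmega^q_{(A_\mu,f_\mu)}$, which gives surjectivity.

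The only real content, as opposed to bookkeeping, is this last point: an element of $A$ that becomes invertible in $A[f^{-1}]$ need not be the image of an element of some $A_\mu$ already invertible in $A_\mu[f_\mu^{-1}]$ unless one unwinds the definition of localisation and uses the finiteness of relations in a filtered colimit as above. Everything else — well-definedness, injectivity, and the passage $\varinjlim \Omega^*_{B_\lambda/k} = \Omega^*_{B/k}$ — is formal.
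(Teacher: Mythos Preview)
Your proof is correct and follows essentially the same route as the paper's own argument: both deduce injectivity from the compatible inclusions into $\varinjlim \Omega^*_{A_\lambda[f_\lambda^{-1}]} \cong \Omega^*_{A[f^{-1}]}$, and for surjectivity both reduce to lifting an element of $A \cap A[f^{-1}]^*$ to some $A_\mu \cap A_\mu[f_\mu^{-1}]^*$ by lifting a witness $c$ to a relation $ac = f^n$ through the filtered colimit. Your version spells out the clearing-of-denominators step and treats the full monomials from Remark~\ref{rema:sumPOmega} rather than just the $\dlog$'s, but the content is identical.
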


\begin{proof}
The canonical comparison morphism is compatible with the canonical inclusions into $\varinjlim \Omega^*_{A_\lambda[\frac{1}{f_\lambda}]} \cong \Omega^*_{A[\frac{1}{f}]}$, so it is injective. For surjectivity, it suffices to show that for any $a \in A \cap (A[\frac{1}{f}]^*)$, the element $\dlog(a)$ is in the image. But any such $a$ can be lifted to some $A_\lambda \cap (A_\lambda[f_\lambda^{-1}]^*)$.
Indeed, we can lift to some $b \in A_\lambda$. The condition $b \in A_\lambda[f_\lambda^{-1}]^*$ is equivalent to the existence of some $c \in A_\lambda$ with $bc = f^n$ for some $n$. Such a $c$ exists in the limit since $a \in A \cap (A[\frac{1}{f}]^*)$, so up to changing $\lambda$, we can find one in $A_\lambda$.
\end{proof}

\begin{prop} \label{prop:POmegaLoc}
Let $A$ be a $k$-algebra and $f, g \in A$ nonzero divisors. The canonical comparison morphism of dg-$\Omega^*_A[\frac{1}{g}]$-modules 
\begin{equation} \label{equa:POmegaLoc}
\ulPOmega^*_{(A,f)}[\tfrac{1}{g}] \to \ulPOmega^*_{(A[\frac{1}{g}],f)}
\end{equation}
is surjective in the following two cases:
\begin{enumerate}
 \item \label{prop:POmegaLoc:supp} $A[\tfrac{1}{f}] = A[\frac{1}{fg}]$ (that is, $supp(g) \subseteq supp(f)$).
 \item \label{prop:POmegaLoc:PID} $A$ is a UFD.
\end{enumerate}
and injective in the case
\begin{enumerate} \setcounter{enumi}{2}
 \item \label{prop:POmegaLoc:TF} $\Omega^1_{A[\frac{1}{f}]}$ is $g$-torsion free.
\end{enumerate} 
\end{prop}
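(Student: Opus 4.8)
The plan is to carry out the whole argument inside ambient modules of Kähler differentials, using that $A[(fg)^{-1}] = A[f^{-1}][g^{-1}]$ is a localisation of $A[f^{-1}]$. Hence Kähler differentials, and so their exterior powers, provide a canonical isomorphism $\Omega^q_{A[f^{-1}]/k}[g^{-1}] \xrightarrow{\sim} \Omega^q_{A[(fg)^{-1}]/k}$, and, under the defining inclusions $\ulPOmega^q_{(A,f)} \subseteq \Omega^q_{A[f^{-1}]/k}$ and $\ulPOmega^q_{(A[g^{-1}],f)} \subseteq \Omega^q_{A[(fg)^{-1}]/k}$, the comparison morphism \eqref{equa:POmegaLoc} is, by construction (unwinding Lemma~\ref{lemm:ulPOmegaFunct}), the restriction of the localisation map transported by this isomorphism. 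I would first observe that \emph{injectivity} in case \ref{prop:POmegaLoc:TF} is then essentially formal: the composite $\ulPOmega^q_{(A,f)}[g^{-1}] \to \ulPOmega^q_{(A[g^{-1}],f)} \hookrightarrow \Omega^q_{A[(fg)^{-1}]/k}$ equals the localisation at $g$ of the inclusion $\ulPOmega^q_{(A,f)} \hookrightarrow \Omega^q_{A[f^{-1}]/k}$ followed by the isomorphism above, hence is injective because localisation is exact, and therefore so is the comparison morphism; the role of the hypothesis that $\Omega^1_{A[f^{-1}]/k}$ is $g$-torsion free is precisely that it makes $\Omega^1_{A[f^{-1}]/k} \hookrightarrow \Omega^1_{A[(fg)^{-1}]/k}$, which is the clean way to phrase this.

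For \emph{surjectivity} (cases \ref{prop:POmegaLoc:supp} and \ref{prop:POmegaLoc:PID}) I would invoke the presentation of $\ulPOmega^*$ in Remark~\ref{rema:sumPOmega}: $\ulPOmega^*_{(A[g^{-1}],f)}$ is generated, as a graded $A[g^{-1}]$-algebra, by $A[g^{-1}]$ in degree $0$ together with the degree-one elements $dc$ ($c \in A[g^{-1}]$) and $\dlog(b)$ ($b \in A[g^{-1}] \cap A[(fg)^{-1}]^*$). Since the comparison morphism is a morphism of differential graded $A[g^{-1}]$-algebras which is the identity in degree $0$ and commutes with $d$, it automatically hits every $dc$; so surjectivity reduces to showing that each $\dlog(b)$ lies in the image of $\ulPOmega^1_{(A,f)}[g^{-1}]$.

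I would then treat the two cases. If $A[f^{-1}] = A[(fg)^{-1}]$, this means exactly that $g \in A[f^{-1}]^*$, so $A[g^{-1}] \subseteq A[f^{-1}]$ and $g \in A \cap A[f^{-1}]^*$; writing $b = b'/g^k$ with $b' \in A$, the element $b' = b g^k$ is a product of units of $A[f^{-1}]$, so $b' \in A \cap A[f^{-1}]^*$, whence $\dlog(b'), \dlog(g) \in \ulPOmega^1_{(A,f)}$ and $\dlog(b) = \dlog(b') - k\,\dlog(g) \in \ulPOmega^1_{(A,f)}$. If $A$ is a UFD, write $b = a/g^N$ with $a \in A$; then $a = b g^N$ is a unit of $A[(fg)^{-1}]$, so $a$ divides a power of $fg$ and, $A$ being a UFD, every prime factor of $a$ divides $f$ or divides $g$. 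Split the prime factorisation of $a$ as $a = a_1 a_2$, with $a_1 \in A$ gathering (with the unit of $A$) the prime factors dividing $f$, and $a_2 \in A$ the remaining ones, which all divide $g$. Then $a_1 \in A \cap A[f^{-1}]^*$, so $\dlog(a_1) \in \ulPOmega^1_{(A,f)}$; and $a_2$ divides some $g^M$, say $a_2 c = g^M$ with $c \in A$, so $\dlog(a_2) = (c\,da_2)/g^M$ and $\dlog(g) = dg/g$ both lie in $\ulPOmega^1_{(A,f)}[g^{-1}]$, as $c\,da_2, dg \in \ulPOmega^1_{(A,f)}$. Hence $\dlog(b) = \dlog(a_1) + \dlog(a_2) - N\,\dlog(g)$ lies in the image, which completes surjectivity and, with the first paragraph, the proof.

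The main obstacle I anticipate is the bookkeeping in the UFD case: making the split of the numerator $a$ into an ``$f$-part'' whose $\dlog$ is already in $\ulPOmega^1_{(A,f)}$ and a ``$g$-part'' that becomes integral only after inverting $g$, while checking that prime factors dividing both $f$ and $g$ may be absorbed harmlessly into the $f$-part and that the unit of $A$ poses no problem. The reduction to the degree-one generators and the injectivity statement are, by contrast, formal consequences of the facts that $\ulPOmega^*$ is by construction a sub-dg-algebra of a module of Kähler differentials and that the latter commute with localisation.
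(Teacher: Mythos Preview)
Your argument is correct and, for surjectivity, essentially identical to the paper's: both reduce to showing that each $\dlog(b)$ with $b \in A[g^{-1}] \cap A[(fg)^{-1}]^*$ lies in the image, clear the denominator $g^N$, and then in the UFD case split the numerator into an ``$f$-part'' (whose $\dlog$ already lies in $\ulPOmega^1_{(A,f)}$) and a ``$g$-part'' (whose $\dlog$ becomes integral once $g$ is inverted). For injectivity your route is slightly different and in fact stronger than the paper's: exactness of localisation together with the always-valid isomorphism $\Omega^q_{A[f^{-1}]}[g^{-1}] \cong \Omega^q_{A[(fg)^{-1}]}$ shows that the composite $\ulPOmega^q_{(A,f)}[g^{-1}] \to \Omega^q_{A[(fg)^{-1}]}$ is injective \emph{unconditionally}, so hypothesis~\eqref{prop:POmegaLoc:TF} is never actually used in your argument. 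The paper instead invokes~\eqref{prop:POmegaLoc:TF} to make the non-localised map $\Omega^1_{A[f^{-1}]} \to \Omega^1_{A[(fg)^{-1}]}$ injective and argues element-by-element. Your closing sentence about ``the role of the hypothesis'' is therefore a red herring: your own proof does not need it, and indeed establishes the sharper statement that \eqref{equa:POmegaLoc} is always injective.
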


\begin{proof}
We show surjectivity of Eq.\eqref{equa:POmegaLoc}. Since $\Omega^q_A[\frac{1}{g}] \cong \Omega^q_{A[\frac{1}{g}]}$, for surjectivity it suffices to show that $\dlog(\frac{a}{g^n})$ is in the image of Eq.\eqref{equa:POmegaLoc} for any $\frac{a}{g^n} \in A[\frac{1}{g}] \cap A[\frac{1}{fg}]^*$, cf.Rem.\ref{rema:sumPOmega}. 

First note that we can assume $n = 0$. Indeed, we always have $g \in A[\frac{1}{g}] \cap A[\frac{1}{fg}]^*$ and $\dlog \frac{a}{g^n} = \dlog a - n \dlog g$ (and of course $\frac{a}{g^n}$ is a unit if and only if $a = \frac{a}{g^n}g^n$ is a unit).

 Therefore, it suffices to show that $\dlog(a)$ is in the image of Eq.\eqref{equa:POmegaLoc} for any $a \in A \cap A[\frac{1}{fg}]^*$. In case the case \eqref{prop:POmegaLoc:supp} this follows from $A \cap A[\tfrac{1}{f}]^* = A \cap A[\tfrac{1}{fg}]^*$. Consider the case \eqref{prop:POmegaLoc:PID}. Since $a \in A[\tfrac{1}{fg}]^*$ there is $b \in A$ such that $ab = (fg)^m$ for some $m \in \NN$. 
For any factorisation $g = \pi \theta$ with $\pi, \theta \in A$, the element $\dlog(\pi) = \tfrac{\theta}{g} d \pi$ is in the image of Eq.\eqref{equa:POmegaLoc}. 
On the other hand, for any factoriation $f = \pi \theta$, we have both $\pi, \theta \in A \cap A[\frac{1}{f}]^*$ so $\dlog(\pi)$ is in $\ulPOmega^1_{(A,f)}$ and therefore in the image of Eq.\eqref{equa:POmegaLoc}. 
It follows that $\dlog(\pi)$ is in the image of Eq.\eqref{equa:POmegaLoc} for any irreducible factor $\pi$ of $(fg)^m = ab$, and hence for any irreducible factor of $a$.
Therefore, $\dlog(a)$ is in the image.

Finally, consider case \eqref{prop:POmegaLoc:TF}. To prove injectivity, it suffices to show that both sides inject into $\Omega^1_{A[\frac{1}{fg}]}$. For the target this is by definition. For the source, choose an element $\frac{\omega}{g^n}$ in the kernel, where $\omega \in \ulPOmega^1_{(A,f)}$. Now $\ulPOmega^1_{(A, f)} \to \Omega^1_{A[\frac{1}{f}]}$ is injective by definition and $\Omega^1_{A[\frac{1}{f}]} \to \Omega^1_{A[\frac{1}{gf}]}$ is injective by the hypothesis Eq.\eqref{prop:POmegaLoc:TF}. Hence $\frac{\omega}{g^n} = 0$ implies $g^n \frac{\omega}{g^n} = \omega = 0$.
\end{proof}

\begin{rema}
One can show that if $A = k[a, b, f, g] / (ab-gf)$ then $\ulPOmega^1_{(A,f)}[\frac{1}{g}] \to \ulPOmega^1_{(A[\frac{1}{g}],f)}$ is not surjective. So the UFD hypothesis in Proposition~\ref{prop:POmegaLoc} is well-justified.
\end{rema}

\begin{cor}
If $\hX$ is a locally Noetherian, locally UFD scheme (e.g., regular) and $\mX$ an effective Cartier divisor, there exists a unique quasi-coherent sheaf $\ulPOmega^q_{(\hX, \mX)}$ on the small Zariski site $\hX_{\Zar}$ such that for open affines $U = \Spec(A) \subseteq \hX$ such that $U \times_{\hX} \mX$ is globally principal, say $(f)$, and we have
\[ \ulPOmega^q_{(\hX, \mX)}(U) = \ulPOmega^q_{(A, f)}. \]
Moreover, in this case the stalk at $x \in \hX$ is give by 
\[ (\ulPOmega^q_{(\hX, \mX)})_x = \ulPOmega^q_{(\OO_{X, x}, f)} \]
where, again $\Spec(\OO_{X, x}) \times_{\hX} \mX = (f)$.
\end{cor}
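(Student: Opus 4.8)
The plan is to construct $\ulPOmega^q_{(\hX,\mX)}$ by gluing the local pieces $\ulPOmega^q_{(A,f)}$ along the basis of affine opens on which $\mX$ is principal, and to verify that the transition maps are isomorphisms of $\OO$-modules using Proposition~\ref{prop:POmegaLoc}. First I would observe that since $\hX$ is locally Noetherian and locally UFD, the opens $U = \Spec(A) \subseteq \hX$ with $U \times_{\hX} \mX = (f)$ principal and $A$ a UFD form a basis for the Zariski topology; moreover $f$ is automatically a nonzero divisor because $\mX$ is an effective Cartier divisor and $\hX$ is reduced on each such $U$ (indeed a UFD is a domain). For a distinguished inclusion $\Spec(A[\tfrac1g]) \subseteq \Spec(A)$ with $g \in A$, the defining equation of $\mX$ restricts to the image of $f$, so both hypotheses \eqref{prop:POmegaLoc:supp} and \eqref{prop:POmegaLoc:PID} of Proposition~\ref{prop:POmegaLoc} are in force: \eqref{prop:POmegaLoc:PID} gives surjectivity of $\ulPOmega^q_{(A,f)}[\tfrac1g] \to \ulPOmega^q_{(A[\frac1g],f)}$, and since $A[\tfrac1g]$ is again a UFD (hence a domain), $\Omega^1_{A[\frac1f]}$ is torsion free, so \eqref{prop:POmegaLoc:TF} gives injectivity. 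Hence the transition map is an isomorphism, which is exactly the compatibility data needed to glue a quasi-coherent sheaf on $\hX_{\Zar}$; the standard descent/gluing lemma for quasi-coherent modules (e.g.\ \cite[01AX, 01AK]{stacks-project}) then produces the unique $\ulPOmega^q_{(\hX,\mX)}$ with the stated sections.

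For the cocycle condition over triple overlaps $\Spec(A[\tfrac{1}{gh}])$, I would note that all the comparison morphisms are defined as restrictions of the canonical localisation maps on $\Omega^*_{A[\frac1f]}$ (they are compatible with the inclusions $\ulPOmega^*_{(A,f)} \hookrightarrow \Omega^*_{A[\frac1f]}$ by construction), so cocycle compatibility is inherited from the genuine localisation functor on modules and requires no extra argument. This reduces the whole construction to the single local statement that the comparison map is an isomorphism, handled above.

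For the stalk computation, I would use Lemma~\ref{lemmulPOmegaColim} (filtered colimits): fixing $x \in \hX$ and an affine neighbourhood $U = \Spec(A)$ with $U \times_{\hX} \mX = (f)$, the stalk $(\ulPOmega^q_{(\hX,\mX)})_x$ is the filtered colimit of $\ulPOmega^q_{(\hX,\mX)}(V)$ over affine opens $x \in V \subseteq U$, which by the sections formula is $\varinjlim \ulPOmega^q_{(A[\frac{1}{g}], f)}$ over $g \notin \mathfrak p_x$; since $\OO_{X,x} = \varinjlim A[\tfrac1g]$ and the image of $f$ in $\OO_{X,x}$ is still a nonzero divisor (it defines $\Spec(\OO_{X,x}) \times_{\hX} \mX$), Lemma~\ref{lemmulPOmegaColim} identifies this colimit with $\ulPOmega^q_{(\OO_{X,x}, f)}$.

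The only point requiring genuine care — and the step I expect to be the main obstacle — is verifying that for every distinguished localisation $A \to A[\tfrac1g]$ the comparison morphism is an isomorphism, i.e.\ that the UFD hypothesis really is preserved and that both surjectivity (via \eqref{prop:POmegaLoc:PID}) and injectivity (via \eqref{prop:POmegaLoc:TF}) of Proposition~\ref{prop:POmegaLoc} apply simultaneously; the remark following that proposition (the example $A = k[a,b,f,g]/(ab-gf)$) shows the statement genuinely fails without such a hypothesis, so one must be scrupulous that "locally UFD" is used correctly and that localisations of UFDs remain UFDs (which holds since a localisation of a Noetherian UFD at a multiplicative set is again a UFD, \cite[0BC0]{stacks-project}). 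Everything else is the routine gluing formalism.
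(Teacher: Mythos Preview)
Your approach is essentially the paper's: invoke Proposition~\ref{prop:POmegaLoc} to see that the assignment $U=\Spec(A)\mapsto \ulPOmega^q_{(A,f)}$ is compatible with distinguished localisations (hence glues to a quasi-coherent sheaf), and use Lemma~\ref{lemmulPOmegaColim} for the stalk formula. The paper's proof is a one-liner invoking exactly these two inputs; your version spells out the gluing and cocycle formalism more explicitly, which is fine.

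One point to tighten: your justification of injectivity via \eqref{prop:POmegaLoc:TF} reads ``since $A[\tfrac1g]$ is again a UFD (hence a domain), $\Omega^1_{A[\frac1f]}$ is torsion free''. Being a domain does not by itself force $\Omega^1$ to be torsion-free; what you need is that $\Omega^1_{A[1/f]}$ embeds into $\Omega^1_{\Frac(A)}$. The paper phrases this explicitly as the condition ``$\Omega^1_{A_\lambda}\subseteq \Omega^1_{\Frac(A_\lambda)}$ on every connected component'' and in fact only verifies it in the regular case (where $\Omega^1$ is locally free, hence torsion-free over each component). So your argument is correct in the regular case --- which is all that is used later in the paper --- but the implication ``UFD $\Rightarrow$ $\Omega^1$ torsion-free'' as written is not valid in general, and neither your proof nor the paper's actually establishes the corollary in the full ``locally UFD'' generality of the statement.
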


\begin{proof}
The claim follows from the fact that regular locally Noetherian rings are UFD's satisfying $\Omega^1_{A_\lambda} \subseteq \Omega^1_{\Frac(A_\lambda)}$ on every connected component $A_\lambda$. The localisation claim  comes from Lemma~\ref{lemmulPOmegaColim}.
\end{proof}

\begin{lemma}\label{lem:extPOmega}
For any $k$-algebra $A$ and a non-zero divisor $f \in A$, consider the morphism of dg-modules
\begin{equation}\label{eqOmega1}
\bigwedge\nolimits_A^q \ulPOmega^1_{(A,f)} \to \ulPOmega^q_{(A,f)} 
\end{equation}
which is induced by the universal property of exterior product.
Then Eq.\eqref{eqOmega1} is surjective. 

Moreover, suppose that $\ulPOmega^1_{(A,f)}$ is a flat $A$-module. Then Eq.\eqref{eqOmega1} is an  isomorphism. In particular, both sides of $Eq.\eqref{eqOmega1}$ are flat over $A$.
\end{lemma}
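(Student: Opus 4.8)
The plan is to reduce everything to a description in terms of generators as in Remark \ref{rema:sumPOmega}. For surjectivity of Eq.\eqref{eqOmega1}, recall that $\ulPOmega^q_{(A,f)}$ is by definition the degree-$q$ part of the smallest sub-dg-algebra of $\Omega^*_{A[f^{-1}]/k}$ containing $A$ and all $\dlog(b)$ with $b \in A \cap A[f^{-1}]^*$. Since the wedge of such generators gives exactly the spanning set of Remark \ref{rema:sumPOmega}, namely sums of terms $a\,\dlog(b_1)\wedge\cdots\wedge\dlog(b_n)\wedge dc_1\wedge\cdots\wedge dc_m$ with $n+m=q$ (using $dc = c\,\dlog(\text{stuff})$ is not needed; one just observes $dc \in \ulPOmega^1_{(A,f)}$ as the image of $c \in A = \ulPOmega^0$ under $d$), every element of $\ulPOmega^q_{(A,f)}$ is a sum of wedges of elements of $\ulPOmega^1_{(A,f)}$. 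This is precisely the image of $\bigwedge^q_A \ulPOmega^1_{(A,f)}$, so Eq.\eqref{eqOmega1} is surjective.

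For the isomorphism statement, I would argue that when $\ulPOmega^1_{(A,f)}$ is flat, the map is also injective. The cleanest route is to compare with the localisation $\Omega^*_{A[f^{-1}]/k}$. First, localising at $f$ is exact, so the square
\begin{equation*}
\begin{array}{ccc}
\bigwedge^q_A \ulPOmega^1_{(A,f)}[\tfrac1f] & \to & \bigwedge^q_{A[f^{-1}]} \Omega^1_{A[f^{-1}]} \\
\downarrow & & \downarrow \\
\ulPOmega^q_{(A,f)}[\tfrac1f] & \to & \Omega^q_{A[f^{-1}]}
\end{array}
\end{equation*}
commutes, where I use $\ulPOmega^*_{(A,f)}[\tfrac1f] = \Omega^*_{A[f^{-1}]}$ (every $\dlog(b)$ becomes $db/b$ and every $a \in A$ maps in, so the sub-dg-algebra saturates after inverting $f$; alternatively invoke Proposition \ref{prop:POmegaLoc} with $g = f$). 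So after inverting $f$ the top and bottom horizontal maps are isomorphisms and the right vertical map is an isomorphism, hence so is the left vertical map $\bigwedge^q_A \ulPOmega^1_{(A,f)}[\tfrac1f] \to \ulPOmega^q_{(A,f)}[\tfrac1f]$. Now since $\ulPOmega^1_{(A,f)}$ is flat, $\bigwedge^q_A \ulPOmega^1_{(A,f)}$ is flat and embeds in its localisation at $f$ (as $f$ is a nonzero divisor on a flat module); likewise $\ulPOmega^q_{(A,f)} \hookrightarrow \Omega^q_{A[f^{-1}]}$ by definition. The surjection Eq.\eqref{eqOmega1} fits into a commutative square with these two injections and the localised isomorphism, which forces Eq.\eqref{eqOmega1} to be injective as well, hence an isomorphism. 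The final "in particular" then follows since $\bigwedge^q$ of a flat module is flat.

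The step I expect to require the most care is the flatness input making $\bigwedge^q_A \ulPOmega^1_{(A,f)} \hookrightarrow \bigwedge^q_A \ulPOmega^1_{(A,f)}[\tfrac1f]$: one needs that $f$ acts injectively on $\bigwedge^q$ of a flat $A$-module, which holds because $f$ is a nonzero divisor on $A$ and tensoring the injection $A \xrightarrow{f} A$ with a flat module stays injective. A secondary subtlety is to make sure the diagram chase genuinely pins down injectivity of Eq.\eqref{eqOmega1} — it does, because if $x$ lies in the kernel then its image in the localisation is zero, but the left vertical localisation map is injective (flat module into its localisation), so $x$ maps to $0$ in $\bigwedge^q_A \ulPOmega^1_{(A,f)}[\tfrac1f]$, whence $x = 0$. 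Everything else is formal manipulation with the generating set of Remark \ref{rema:sumPOmega} and the exactness of localisation.
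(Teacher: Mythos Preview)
Your proposal is correct and follows essentially the same route as the paper: surjectivity via the generating set of Remark~\ref{rema:sumPOmega}, and injectivity by embedding $\bigwedge^q_A \ulPOmega^1_{(A,f)}$ into its $f$-localisation (using that exterior powers of flat modules are flat) and identifying the localised map with the tautological isomorphism $\bigwedge^q \Omega^1_{A[f^{-1}]} \cong \Omega^q_{A[f^{-1}]}$ via Proposition~\ref{prop:POmegaLoc}. The paper organises the diagram slightly differently (factoring the composite $\bigwedge^q_A \ulPOmega^1_{(A,f)} \to \Omega^q_{A[f^{-1}]}$ through maps labelled $b$ and $c$) and explicitly invokes Lazard's theorem to justify flatness of $\bigwedge^q$ of a flat module, which you assert without proof; you may want to add a word on that.
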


\begin{proof}
This map Eq.\eqref{eqOmega1} is surjective by the minimality of $\ulPOmega^*_{(A,f)}$ (see also Rem.~\ref{rema:sumPOmega}).
To prove the second assertion, it suffices to show that the composition \[
\bigwedge\nolimits_A^q \ulPOmega^1_{(A,f)} \to \ulPOmega^q_{(A,f)} \to \Omega^q_{A[f^{-1}]}= \bigwedge\nolimits_A^q \Omega^1_{A[f^{-1}]}
\]
 is injective. 
Consider the following commutative diagram:
\[\xymatrix{
\bigwedge\nolimits_A^q \ulPOmega^1_{(A,f)} \ar[r]^a \ar[d]_b & \bigwedge\nolimits_A^q \Omega^1_{A[f^{-1}]} \ar[r]^{\sim} \ar[d] & \Omega^q_{A[f^{-1}]} \ar@{=}[d] \\
\left( \bigwedge\nolimits_A^q \ulPOmega^1_{(A,f)} \right) \otimes_A A[f^{-1}] \ar[r]^{\sim} &\bigwedge\nolimits_A^q  \left( \ulPOmega^1_{(A,f)} \otimes_A A[f^{-1}]  \right) \ar[r]^(0.68)c & \Omega^q_{A[f^{-1}]}
}\]
We claim that $b$ and $c$ are injective, and hence so is $a$.

First we prove that $b$ is injective. To see this, it suffices to prove that $\bigwedge\nolimits_A^q \ulPOmega^1_{(A,f)}$ is $A$-flat. 
This follows from the flatness of $\ulPOmega^1_{(A,f)}$ since any exterior power of any flat $A$-module is $A$-flat. 
We can see this general fact as follows: when $M$ is a free $A$-module of finite rank, then one shows that its exterior powers are free (and hence flat) by a direct calculation. Noting that exterior product commutes with direct limits, one can prove the general case by using Lazard's theorem which asserts that an $A$-module is flat if and only if it is a direct limit of free $A$-modules of finite rank.

Finally, to see that $c$ is injective, it suffices to apply Prop.~\ref{prop:POmegaLoc} (3). In fact, $c$ is also surjective by Prop.~\ref{prop:POmegaLoc} (1) since $\Omega^1_{A[f^{-1}]}$ is an $A[f^{-1}]$-module.
\end{proof}

\begin{lemma}\label{lem:MOmega-otimes}
For any $k$-algebra $A$ and a non-zero divisor $f \in A$ such that $\ulPOmega^1_{(A,f)}$ is a flat $A$-module, the canonical surjection
\[
\ulMO_{(A,f)} \otimes_A \ulPOmega^q_{(A,f)} \twoheadrightarrow \ulMOmega^q_{(A,f)} 
\]
is an isomorphism of $A$-modules for any $q \geq 0$.
\end{lemma}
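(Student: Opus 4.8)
The plan is to reduce to the case where $\ulPOmega^1_{(A,f)}$ is free, invoking Lazard's theorem and the fact that $\ulMO_{(A,f)} \otimes_A (-)$ and $(-) \otimes_A \ulPOmega^q_{(A,f)}$ both commute with filtered colimits. The map in question is already known to be surjective (it is the defining surjection of $\ulMOmega^q_{(A,f)}$ as a submodule of $\Omega^q_{A[f^{-1}]}$), so the content is injectivity. By Lemma~\ref{lem:extPOmega}, the flatness hypothesis gives $\ulPOmega^q_{(A,f)} \cong \bigwedge^q_A \ulPOmega^1_{(A,f)}$, and this is again flat over $A$; the same lemma tells us that $\ulPOmega^q_{(A,f)} \hookrightarrow \Omega^q_{A[f^{-1}]}$ and that after inverting $f$ this inclusion becomes the identity (since $\ulPOmega^q_{(A,f)}[f^{-1}] \xrightarrow{\sim} \Omega^q_{A[f^{-1}]}$). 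So it suffices to show the composite
\[
\ulMO_{(A,f)} \otimes_A \ulPOmega^q_{(A,f)} \to \ulMOmega^q_{(A,f)} \hookrightarrow \Omega^q_{A[f^{-1}]}
\]
is injective.

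First I would record the key structural observation about $\ulMO_{(A,f)}$: by definition it sits in $A[f^{-1}]$, and multiplication by $f$ identifies it with the ideal $\sqrt{(f)} \subseteq A$; in particular $\ulMO_{(A,f)}$ is torsion-free over $A$ and $\ulMO_{(A,f)}[f^{-1}] = A[f^{-1}]$. Now consider the commutative square obtained by inverting $f$:
\[
\xymatrix{
\ulMO_{(A,f)} \otimes_A \ulPOmega^q_{(A,f)} \ar[r] \ar[d] & \Omega^q_{A[f^{-1}]} \ar@{=}[d] \\
\bigl(\ulMO_{(A,f)} \otimes_A \ulPOmega^q_{(A,f)}\bigr)[f^{-1}] \ar[r]^-{\sim} & \Omega^q_{A[f^{-1}]}
}
\]
where the bottom arrow is an isomorphism because $\ulMO_{(A,f)}[f^{-1}] = A[f^{-1}]$ and $\ulPOmega^q_{(A,f)}[f^{-1}] = \Omega^q_{A[f^{-1}]}$, using that localisation commutes with tensor product. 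So it remains to check that the left vertical localisation map is injective, i.e.\ that $\ulMO_{(A,f)} \otimes_A \ulPOmega^q_{(A,f)}$ has no $f$-torsion. Since $\ulPOmega^q_{(A,f)}$ is a flat $A$-module, tensoring the injection $A \xrightarrow{f} A$ (which is injective as $f$ is a nonzerodivisor) with $\ulPOmega^q_{(A,f)}$ shows $\ulPOmega^q_{(A,f)}$ is $f$-torsion-free; tensoring again by the flat module $\ulPOmega^q_{(A,f)}$ — or rather tensoring the $f$-torsion-free module $\ulMO_{(A,f)}$ with the flat module $\ulPOmega^q_{(A,f)}$ — gives that $\ulMO_{(A,f)} \otimes_A \ulPOmega^q_{(A,f)}$ is $f$-torsion-free. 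This completes the argument.

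The main obstacle, such as it is, is making sure the flatness of $\ulPOmega^q_{(A,f)}$ is actually available rather than merely of $\ulPOmega^1_{(A,f)}$; this is exactly the "in particular" clause of Lemma~\ref{lem:extPOmega}, so no extra work is needed. One subtlety to be careful about is that injectivity of $\ulMO_{(A,f)} \otimes_A \ulPOmega^q_{(A,f)} \to \Omega^q_{A[f^{-1}]}$ is genuinely being deduced from $f$-torsion-freeness of the source together with the fact that the map becomes an isomorphism after inverting $f$ — one should state explicitly that for an $f$-torsion-free module $M$ the localisation $M \to M[f^{-1}]$ is injective, which is immediate. Everything else is formal manipulation with flat base change and filtered colimits.
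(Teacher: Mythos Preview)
Your argument is correct and is essentially the same as the paper's: both reduce to showing that the composite $\ulMO_{(A,f)} \otimes_A \ulPOmega^q_{(A,f)} \to \Omega^q_{A[f^{-1}]}$ is injective, and both deduce this from flatness of $\ulPOmega^q_{(A,f)}$ (supplied by Lemma~\ref{lem:extPOmega}) together with the inclusions $\ulMO_{(A,f)} \subset A[f^{-1}]$ and $\ulPOmega^q_{(A,f)} \subset \Omega^q_{A[f^{-1}]}$. The paper factors the composite as $\ulMO_{(A,f)} \otimes_A \ulPOmega^q_{(A,f)} \hookrightarrow A[f^{-1}] \otimes_A \ulPOmega^q_{(A,f)} \hookrightarrow A[f^{-1}] \otimes_A \Omega^q_{A[f^{-1}]}$, which is your localisation square read slightly differently; your opening remark about Lazard's theorem and filtered colimits is not actually used and can be dropped.
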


\begin{proof}
We have $\ulMOmega^q_{(A,f)} \subseteq \Omega^q_{A[f^{-1}]}$ by definition so it suffices to show that the map
\[
\ulMO_{(A,f)} \otimes_A \ulPOmega^q_{(A,f)} 
\xrightarrow{a}
A[\tfrac{1}{f}] \otimes_A \ulPOmega^q_{(A,f)}
\xrightarrow{b}
A[\tfrac{1}{f}] \otimes_A \Omega^q_{A[f^{-1}]}
 \cong \Omega^q_{A[f^{-1}]}
\]
is injective, where $a$ is induced by the inclusion $\ulMO_{(A,f)} \subset A[\tfrac{1}{f}]$, and $b$ is induced by the inclusion $\ulPOmega^q_{(A,f)} \subset \Omega^q_{A[f^{-1}]}$.
The map $a$ is injective since $\ulPOmega^q_{(A,f)}$ is a flat $A$-module
 by Lemma~\ref{lem:extPOmega}.
The map $b$ is injective since $A[f^{-1}]$ is flat over $A$.
\end{proof}

\begin{cor}\label{cor:qcoh-MOmega}
Let $\hX$ be a regular locally Noetherian scheme over $k$ and $\mX$ an effective Cartier divisor. 
Then there exists a unique quasi-coherent sheaf $\ulMOmega^q_{(\hX, \mX)}$ on the small Zariski site $\hX_{\Zar}$ such that for open affines $U = \Spec(A) \subseteq \hX$ such that $U \times_{\hX} \mX$ is globally principal, say $(f)$, we have
\[ \ulMOmega^q_{(\hX, \mX)}(U) = \ulMOmega^q_{(A, f)}. \]
Moreover, the stalk at $x \in \hX$ is given by 
\[ (\ulMOmega^q_{(\hX, \mX)})_x = \ulMOmega^q_{(\OO_{X, x}, f)} \]
where, again $\Spec(\OO_{X, x}) \times_{\hX} \mX = (f)$.
\end{cor}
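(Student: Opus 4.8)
The plan is to deduce Corollary~\ref{cor:qcoh-MOmega} from the already-established gluing statement for $\ulPOmega^q$ (the Corollary preceding Lemma~\ref{lem:extPOmega}) together with Lemma~\ref{lem:MOmega-otimes}. The key observation is that on a regular locally Noetherian scheme the flatness hypothesis needed to apply Lemma~\ref{lem:MOmega-otimes} is automatic: on each connected component $A$ is a regular (hence UFD) domain, and then Example~\ref{exam:sncOmegaFree}'s local computation together with Lemma~\ref{lem:extPOmega} shows that $\ulPOmega^1_{(A,f)}$ is free (localise at each prime, use a regular system of parameters to write $f$ as a monomial, and observe that the local $\ulPOmega^1$ is free of rank $n$). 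So $\ulPOmega^1_{(A,f)}$ is flat over $A$, Lemma~\ref{lem:MOmega-otimes} applies, and we may \emph{define}
\[
\ulMOmega^q_{(\hX,\mX)} := \ulMO_{(\hX,\mX)} \otimes_{\OO_{\hX}} \ulPOmega^q_{(\hX,\mX)},
\]
where $\ulMO_{(\hX,\mX)}$ is the quasi-coherent sheaf of Theorem~\ref{theo:MOdefi} (note: for regular $\hX$ this coincides with $\ulMO((-)^{\ic})$, since regular rings are integrally closed, so there is no sheafification subtlety) and $\ulPOmega^q_{(\hX,\mX)}$ is the quasi-coherent sheaf from the earlier Corollary.

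First I would record that $\ulMOmega^q_{(\hX,\mX)}$, being a tensor product of two quasi-coherent $\OO_{\hX}$-modules, is quasi-coherent. Next, for $U = \Spec(A)$ with $U\times_{\hX}\mX = (f)$ globally principal, take sections: since tensor product of quasi-coherent sheaves commutes with taking sections over an affine open, $\ulMOmega^q_{(\hX,\mX)}(U) = \ulMO_{(A,f)} \otimes_A \ulPOmega^q_{(A,f)}$, which by Lemma~\ref{lem:MOmega-otimes} is canonically isomorphic to $\ulMOmega^q_{(A,f)}$ as a submodule of $\Omega^q_{A[f^{-1}]}$. Uniqueness of the sheaf is then the standard fact that a quasi-coherent sheaf on a scheme is determined by its sections on a basis of affine opens; concretely, affine opens $\Spec(A) \subseteq \hX$ over which $\mX$ is globally principal form such a basis (shrink further if necessary so that $f$ has a global description). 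The stalk formula follows by passing to the filtered colimit over affine opens $U \ni x$: by Lemma~\ref{lemmulPOmegaColim} (and flatness of filtered colimits for the $\ulMO$ factor, cf.\ Theorem~\ref{theo:MOdefi}) one gets $(\ulMOmega^q_{(\hX,\mX)})_x = \ulMO_{(\OO_{X,x},f)} \otimes_{\OO_{X,x}} \ulPOmega^q_{(\OO_{X,x},f)} = \ulMOmega^q_{(\OO_{X,x},f)}$, the last equality again by Lemma~\ref{lem:MOmega-otimes} applied to the regular local ring $\OO_{X,x}$.

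The main obstacle, such as it is, is verifying the compatibility of the local descriptions $\ulMOmega^q_{(A,f)}$ under restriction to a smaller principal open $\Spec(A[\tfrac{1}{g}])$ — i.e.\ that the canonical maps assemble into a presheaf at all. This is exactly what the tensor-product definition buys us for free, but one should still check that the identification $\ulMO_{(A,f)}\otimes_A\ulPOmega^q_{(A,f)} \cong \ulMOmega^q_{(A,f)}$ is compatible with the localisation map $\ulMOmega^q_{(A,f)} \to \ulMOmega^q_{(A[1/g],f)}$ inside $\Omega^q_{A[f^{-1}]}$; this follows because both the $\ulMO$-localisation isomorphism and the $\ulPOmega$-localisation statement (Proposition~\ref{prop:POmegaLoc}, using that $A$ is a UFD so case~\eqref{prop:POmegaLoc:PID} applies) are compatible with the inclusions into the corresponding modules of absolute K\"ahler differentials, and these inclusions are themselves functorial in $A$. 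Everything else is bookkeeping with quasi-coherent sheaves, and I would keep the proof to a couple of sentences citing the earlier Corollary, Lemma~\ref{lem:extPOmega}, and Lemma~\ref{lem:MOmega-otimes}.
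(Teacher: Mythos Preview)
Your overall approach is the paper's: define $\ulMOmega^q_{(\hX,\mX)} := \ulMO_{(\hX,\mX)} \otimes_{\OO_{\hX}} \ulPOmega^q_{(\hX,\mX)}$ and cite Lemma~\ref{lem:MOmega-otimes} for the sections description. The paper's entire proof is this definition plus the citation; your added discussion of stalks, uniqueness, and restriction compatibility is sound and more thorough than what the paper writes.

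There is, however, a genuine gap in your flatness argument. You write ``localise at each prime, use a regular system of parameters to write $f$ as a monomial,'' but for a general effective Cartier divisor this step fails: an irreducible $f$ in a regular local ring need not be part of any regular system of parameters. Take $A = k[x,y]_{(x,y)}$ with $\mathrm{char}\,k \neq 2,3$ and $f = y^2 - x^3$; then $f \in \m^2$, so Example~\ref{exam:sncOmegaFree} does not apply, and one checks that $\ulPOmega^1_{(A,f)} = \Omega^1_A + A\cdot\dlog f$ has presentation $A^3/A\cdot(3x^2,-2y,f)$, whence $\mathrm{Tor}_1^A(\ulPOmega^1_{(A,f)},A/\m) \cong k \neq 0$. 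So $\ulPOmega^1_{(A,f)}$ is not flat and the hypothesis of Lemma~\ref{lem:MOmega-otimes} is not satisfied. To be fair, the paper's proof does not verify the flatness hypothesis either; but every subsequent use of the corollary (Lemma~\ref{lemm:Omegaheightone}, Lemma~\ref{lemm:dvretaleOmega}, Proposition~\ref{prop:OmegaEtaleQC}, Corollary~\ref{cor:ci-MOmega}) is under a normal-crossings assumption on $\mX$, where your argument does go through verbatim via Example~\ref{exam:sncOmegaFree}.
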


\begin{proof}
Define $\ulMOmega^q_{(\hX, \mX)}$ to be the quasi-coherent $\OO_{\hX}$-module
\[
\ulMOmega^q_{(\hX, \mX)} := \ulMO_{(\hX, \mX)} \otimes_{\OO_{\hX}} \ulPOmega^q_{(\hX, \mX)},
\]
where $\ulPOmega^q_{(\hX, \mX)}$ is the quasi-coherent sheaf from Proposition~\ref{prop:POmegaLoc}.
Then this sheaf satisfies the desired property by Lemma~\ref{lem:MOmega-otimes}.
\end{proof}

\subsection{\'Etale descent for $\protect \ulMOmega^q$}

In this subsection we prove the \'etale version of Corollary~\ref{cor:qcoh-MOmega}.

\begin{lemm} \label{lemm:Omegaheightone}
Suppose that $A$ is a smooth $k$-algebra, and $f$ is a strict normal crossings divisor. Then for each $q \geq 0$ we have 
\begin{align*}
\ulPOmega^q_{(A, f)} &= \bigcap_{\substack{\p \in \Spec(A) \\ ht(\p) = 1}}  \ulPOmega^q_{(A_\p, f)} \\
\ulMOmega^q_{(A, f)} &= \bigcap_{\substack{\p \in \Spec(A) \\ ht(\p) = 1}} \ulMOmega^q_{(A_\p, f)} 
\end{align*}
where the intersection takes place in $\Omega^q_{\Frac(A)}$.
\end{lemm}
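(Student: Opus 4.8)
The plan is to deduce the two equalities from a standard fact about finite locally free modules over normal domains. First I would reduce to the case that $\Spec A$ is connected: a smooth $k$-algebra is a finite product of smooth $k$-algebras with connected spectrum, and both sides of each asserted equality, together with the ambient module $\Omega^q_{\Frac(A)}$, split along this product. A connected smooth $k$-algebra is regular, hence normal by Serre's criterion, hence a Noetherian normal domain; so from now on assume $A$ is such, with fraction field $K$.

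Next I would record that, under the strict normal crossings hypothesis on $f$, both $\ulPOmega^q_{(A,f)}$ and $\ulMOmega^q_{(A,f)}$ are finite locally free $A$-modules. Indeed, at a maximal ideal of $A$ one may choose a regular system of parameters adapted to $f$, and Example~\ref{exam:sncOmegaFree} then identifies the corresponding stalks of the quasi-coherent sheaves $\ulPOmega^q_{(\hX,\mX)}$ and $\ulMOmega^q_{(\hX,\mX)}$ (from the corollary following Proposition~\ref{prop:POmegaLoc} and from Corollary~\ref{cor:qcoh-MOmega}) with free modules; a finitely generated quasi-coherent sheaf with free stalks and locally constant rank is finite locally free. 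Concretely, $\ulPOmega^q_{(A,f)} = \Omega^q_{\hX}(\log\mX)$ is the sheaf of logarithmic differentials along the snc divisor $\mX$, and $\ulMOmega^q_{(A,f)} = \Omega^q_{\hX}(\log\mX)(\mX - |\mX|)$ is its twist by the invertible sheaf $\ulMO_{(A,f)} = \OO_{\hX}(\mX - |\mX|)$.

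All the modules in play embed compatibly into $\Omega^q_K$: each $\Omega^1_{A[f^{-1}]/k}$ and each further localisation $\Omega^1_{A_\p[f^{-1}]/k}$ is free over a regular domain, hence injects into $\Omega^q_{\Frac(A[f^{-1}])} = \Omega^q_K$, and the functoriality maps of Lemma~\ref{lemm:ulPOmegaFunct} are the evident inclusions; so the inclusion ``$\subseteq$'' is immediate. What remains is the commutative-algebra statement: \emph{if $M$ is a finite projective module over a Noetherian normal domain $A$ with fraction field $K$, then $M = \bigcap_{\mathrm{ht}(\p)=1} M_\p$ inside $M_K$.} To see this, pick $M'$ with $M \oplus M' \cong A^n$; an element $x \in M_K \subseteq K^n$ that lies in $M_\p$ for every height-one prime $\p$ has all coordinates in $\bigcap_{\mathrm{ht}(\p)=1} A_\p = A$ (a Noetherian normal domain being a Krull domain), so $x \in A^n \cap M_K = M$, the last equality because a direct summand of $A^n$ is saturated in $A^n$. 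Applying this to $M = \ulPOmega^q_{(A,f)}$ and to $M = \ulMOmega^q_{(A,f)}$, and using the stalk formulas $(\ulPOmega^q_{(A,f)})_\p = \ulPOmega^q_{(A_\p,f)}$ and $(\ulMOmega^q_{(A,f)})_\p = \ulMOmega^q_{(A_\p,f)}$ already cited, gives both equalities.

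I do not anticipate a real obstacle; the only delicate points are the reduction to a connected (hence normal-domain) total space and the verification that the strict normal crossings hypothesis genuinely yields \emph{local freeness} of $\ulPOmega^q$ and $\ulMOmega^q$ — that is what allows the clean direct-summand argument in place of invoking the theory of reflexive modules. Alternatively, one could bypass the projective-module lemma entirely by noting that a locally free (indeed, reflexive) sheaf on a normal Noetherian scheme coincides with the pushforward of its restriction to the complement of any closed subset of codimension $\geq 2$.
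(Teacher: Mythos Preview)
Your proposal is correct and follows essentially the same approach as the paper: both arguments hinge on the local freeness of $\ulPOmega^q_{(A,f)}$ and $\ulMOmega^q_{(A,f)}$ (via Example~\ref{exam:sncOmegaFree}) together with the Krull-domain identity $A=\bigcap_{\mathrm{ht}(\p)=1}A_\p$. The only organisational difference is that the paper first writes $M=\bigcap_{\p\in\Spec(A)}M_\p$ for a locally free $M$ and then localises at a maximal ideal to reduce to the free case, whereas you go straight to the projective-module lemma via the direct-summand trick; your explicit reduction to connected components is also something the paper leaves implicit.
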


\begin{proof}
In general, for any locally free $A$-module $M$ of finite rank over an integral $A$, we have $M \stackrel{\sim}{\to} \lim_{\p \in \Spec(A)} M_\p$. Indeed, for finite free $M$, it follows from the case $M = A$. For a general finite rank locally free module, it follows by Zariski descent. Since $M$ is torsion-free it is identified with a submodule of $M \otimes_A \Frac(A)$ so the limit is an intersection.

Therefore, by Proposition~\ref{prop:POmegaLoc}, resp. Corollary~\ref{cor:qcoh-MOmega}, we have
 
\[
\ulPOmega^q_{(A, f)} = \bigcap_{\p \in \Spec(A)} \ulPOmega^q_{(A_\p, f)}, \quad 
\ulMOmega^q_{(A, f)} = \bigcap_{\p \in \Spec(A)} \ulMOmega^q_{(A_\p, f)},
\]
and hence we may replace $A$ with the localisation at a maximal ideal $\m$. In this case, $\ulPOmega^q_{(A, f)}$ (resp. $\ulMOmega^q_{(A, f)}$) is globally free of finite rank by Example~\ref{exam:sncOmegaFree}. Therefore, the result follows from the corresponding statement for $A$, i.e., \[A = \bigcap_{\substack{\p \in \Spec(A) \\ ht(\p) = 1}} A_\p.\] 
\end{proof}

\begin{lemm} \label{lemm:dvretaleOmega}
Suppose that $A \to B$ is an \'etale morphism and $A$ is a dvr. 
Then 
\begin{align*}
\ulPOmega^q_{(B, f)} &= B \otimes_A \ulPOmega^q_{(A, f)}, \\
\ulMOmega^q_{(B, f)} &= B \otimes_A \ulMOmega^q_{(A, f)}.
\end{align*}
Consequently, these are quasi-coherent \'etale sheaves on the small \'etale site on $\Spec(A)$.
\end{lemm}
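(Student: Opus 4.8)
The plan is to deduce the statement for $\ulMOmega^q$ from that for $\ulPOmega^q$, to reduce the latter to the case $q=1$, and to settle that case by localising at the primes of $B$. Some preliminaries: since $A$ is a domain and $A\to B$ is flat, the image of $f$ is again a nonzero divisor in $B$, the localised map $A[f^{-1}]\to B[f^{-1}]$ is again \'etale, so $\Omega^q_{B[f^{-1}]/k}=B\otimes_A\Omega^q_{A[f^{-1}]/k}$, and flatness of $B$ over $A$ guarantees that forming the image of a module map commutes with $B\otimes_A(-)$. Granting the identity $\ulPOmega^q_{(B,f)}=B\otimes_A\ulPOmega^q_{(A,f)}$ and using that $\ulMO$ is quasi-coherent as an \'etale sheaf (Theorem~\ref{theo:MOdefi}), so that $\ulMO_{(B,f)}=B\otimes_A\ulMO_{(A,f)}$, one obtains a canonical isomorphism $\ulMO_{(B,f)}\otimes_B\ulPOmega^q_{(B,f)}\cong B\otimes_A(\ulMO_{(A,f)}\otimes_A\ulPOmega^q_{(A,f)})$; passing to images inside $\Omega^q_{B[f^{-1}]/k}=B\otimes_A\Omega^q_{A[f^{-1}]/k}$ and invoking Definition~\ref{def:PandMOmega} gives $\ulMOmega^q_{(B,f)}=B\otimes_A\ulMOmega^q_{(A,f)}$. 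So it remains to prove the $\ulPOmega^q$-identity.

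Both $B\otimes_A\ulPOmega^q_{(A,f)}$ and $\ulPOmega^q_{(B,f)}$ are submodules of $\Omega^q_{B[f^{-1}]/k}$, and the comparison map coming from functoriality (Lemma~\ref{lemm:ulPOmegaFunct}) respects these inclusions (flatness of $B/A$ again), hence is injective; so only surjectivity is at issue. By Lemma~\ref{lem:extPOmega}, $\ulPOmega^q_{(-,f)}$ is the image of $\bigwedge^q\ulPOmega^1_{(-,f)}$, and since both $\bigwedge^q(-)$ and the formation of images commute with the flat base change $A\to B$, the identity for general $q$ reduces to the case $q=1$. Now $\ulPOmega^1_{(B,f)}$ is generated over $B$ by the elements $db$ with $b\in B$ and $\dlog(c)$ with $c\in B\cap B[f^{-1}]^*$ (Remark~\ref{rema:sumPOmega}). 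As $A\to B$ is \'etale, $\Omega^1_{B/A}=0$, so $\Omega^1_{B/k}=B\otimes_A\Omega^1_{A/k}$, and hence every $db$ lies in $B\otimes_A\Omega^1_{A/k}\subseteq B\otimes_A\ulPOmega^1_{(A,f)}$ (all identified with their images in $\Omega^1_{B[f^{-1}]/k}$).

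The crux is to show $\dlog(c)\in B\otimes_A\ulPOmega^1_{(A,f)}$ for $c\in B\cap B[f^{-1}]^*$. As $B\otimes_A\ulPOmega^1_{(A,f)}$ is a quasi-coherent $\OO_{\Spec B}$-submodule of $\Omega^1_{B[f^{-1}]/k}$, it suffices to verify this after localising at each prime $\p\subset B$. The local ring $B_\p$ is a localisation of an \'etale $A$-algebra, hence flat and unramified over the dvr $A$, hence a regular local ring of dimension at most one, that is, a field or a dvr; and when it is a dvr a fixed uniformiser $\pi$ of $A$ generates its maximal ideal, because $B_\p/\pi B_\p$ is a localisation of $B\otimes_A(A/\pi A)$, which is \'etale over the residue field of $A$ and hence a finite product of fields. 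In either case every element of $B_\p\cap B_\p[f^{-1}]^*$ is a unit times a power of $\pi$ (the subcase $f\in A^*$, where one may take $\pi=1$, being trivial), whence, exactly as for $A$ itself,
\[
\ulPOmega^1_{(B_\p,f)}=\big(\text{image of }\Omega^1_{B_\p/k}\text{ in }\Omega^1_{B_\p[f^{-1}]/k}\big)+B_\p\cdot\dlog(\pi),
\]
and the same formula holds for $(A,f)$. Since $\Omega^1_{B_\p/k}=B_\p\otimes_A\Omega^1_{A/k}$ (base change along the formally \'etale $A\to B_\p$) and the term $\dlog(\pi)$ is the image of $\dlog(\pi)\in\ulPOmega^1_{(A,f)}$, flat base change turns the formula into $\ulPOmega^1_{(B_\p,f)}=B_\p\otimes_A\ulPOmega^1_{(A,f)}$; in particular $\dlog(c)$ lies there. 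Running this over all $\p$ yields $\ulPOmega^1_{(B,f)}\subseteq B\otimes_A\ulPOmega^1_{(A,f)}$, which completes the case $q=1$.

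I expect the $\dlog(c)$ step to be the only real obstacle. A priori one knows only that $c$ divides a power of $f$, which would permit $\dlog(c)$ a pole of large order along the support of $f$; the point is that $\dlog(c)$ has in fact at most a simple (logarithmic) pole there --- equivalently, that $c$ is locally a unit times a power of a uniformiser --- and this is precisely where one uses that $A$ is a dvr and $A\to B$ is unramified, which is why one passes to the local rings $B_\p$. For the final assertion of the lemma: the base-change identities just established show that the assignments $\Spec B\mapsto\ulPOmega^q_{(B,f)}$ and $\Spec B\mapsto\ulMOmega^q_{(B,f)}$, for $\Spec B$ \'etale over $\Spec A$, are the restrictions to the small \'etale site of $\Spec A$ of the quasi-coherent $\OO_{\Spec A}$-modules furnished by the corollary to Proposition~\ref{prop:POmegaLoc} and by Corollary~\ref{cor:qcoh-MOmega}; such restrictions are quasi-coherent \'etale sheaves.
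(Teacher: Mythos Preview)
Your proof is correct and follows essentially the same line as the paper's: both establish injectivity via flatness of $B/A$ and the identification $\Omega^q_{B[f^{-1}]}\cong B\otimes_A\Omega^q_{A[f^{-1}]}$, and both reduce surjectivity to showing that $\dlog(b)$ lies in the image, exploiting that a uniformiser $\pi$ of $A$ is also a uniformiser of the local rings of $B$ (so $b=u\pi^n$ and $\dlog(b)=u^{-1}du+n\dlog(\pi)$).

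The organisational differences are minor. The paper reduces to $B$ itself being a dvr by invoking Proposition~\ref{prop:POmegaLoc}(2) (using that a semi-local Dedekind domain is a PID, hence a UFD), whereas you localise at an arbitrary prime $\p$ of $B$ and check membership of the single element $\dlog(c)$ in the submodule $B_\p\otimes_A\ulPOmega^1_{(A,f)}$; your route is slightly more elementary since it avoids the localisation compatibility of $\ulPOmega^1_{(B,-)}$. You also make the reduction to $q=1$ explicit via Lemma~\ref{lem:extPOmega} and flat base change of images, while the paper leaves this implicit in the dg-algebra generation. Finally, you spell out the deduction of the $\ulMOmega^q$ identity from the $\ulPOmega^q$ one (via quasi-coherence of $\ulMO$ and flat base change of images), which the paper's proof of this lemma does not address directly.
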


\begin{proof}
Consider the square
\begin{equation}
 \xymatrix{
B \otimes_A \ulPOmega^q_{(A, f)} \ar[r]^-\phi \ar[d]_\psi & \ulPOmega^q_{(B, f)} \ar[d] \\
B \otimes_A \Omega^q_{A[f^{-1}]} \ar[r]_-\theta & \Omega^q_{B[f^{-1}]} 
} 
\end{equation}
The canonical morphism $\phi$ is injective because $\psi$ and $\theta$ are. Indeed, $A \to B$ is flat and $\ulPOmega^q_{(A, f)} \to \Omega^q_{A[f^{-1}]}$ is injective so $\psi$ is injective, and $\theta$ is an isomorphism because $A \to B$ is \'etale.

For surjectivity, since $A$ is a dvr, $B$ is semi-local with dvr local rings (at maximal ideals), so by Proposition~\ref{prop:POmegaLoc} (resp. Corollary~\ref{cor:qcoh-MOmega}) applied to $(B, f)$ we can assume that $B$ is also a dvr.

To show that $\phi$ is surjective, it suffices to show that for every $b \in B \cap B[f^{-1}]^*$ the element $\dlog(b)$ is in the image of $\phi$. Since $B$ is a dvr, we have $B[f^{-1}] = \Frac(B)$, so $ B \cap B[f^{-1}]^* = B \setminus \{0\}$. Since $A \to B$ is a \'etale, $\m_B = \m_A B$, and since both are dvrs, any uniformiser $\pi$ for $A$ is also a uniformiser for $B$. 
Now since $B$ is a dvr, every element $b \in B$ is of the form $b = u\pi^n$ for some $u \in B^*$ where $\pi \in A$ is our chosen uniformiser. Then $\dlog(b) = \dlog(u) + \dlog(\pi^n) = u^{-1}du + \dlog(\pi^n)$ with $\pi \in A \setminus \{0\} = A \cap A[f^{-1}]^*$. %
Clearly, $\dlog(\pi^n)$ is in the image of $\phi$. For $u^{-1}du$, note $u^{-1} \in B$ and $B\otimes_A \Omega^q_A = \Omega^q_B$ because $A \to B$ is \'etale, so $u^{-1}du$ is also in the image of $\phi$.
\end{proof}

\begin{prop} \label{prop:OmegaEtaleQC}
Suppose that $A \to B$ is an \'etale morphism between smooth $k$-algebras and $f \in A$ is strict normal crossings. 
Then the canonical morphisms
\begin{align*}
B \otimes_A \ulPOmega^q_{(A, f)} &\stackrel{\sim}{\to} \ulPOmega^q_{(B, f)} \\
B \otimes_A \ulMOmega^q_{(A, f)} &\stackrel{\sim}{\to} \ulMOmega^q_{(B, f)}
\end{align*}
are isomorphisms. In other words, both $\ulPOmega^q_{(A, f)}$ and $\ulMOmega^q_{(A, f)}$ are quasi-coherent \'etale sheaves on the small \'etale site $\Spec(A)_{\et}$.
\end{prop}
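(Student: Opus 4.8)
The plan is to reduce the statement for $\ulPOmega^q$ to the case of a discrete valuation ring, which was settled in Lemma~\ref{lemm:dvretaleOmega}, by means of the codimension-one description of Lemma~\ref{lemm:Omegaheightone}, and then to deduce the statement for $\ulMOmega^q$ from the one for $\ulPOmega^q$ using Lemma~\ref{lem:MOmega-otimes} together with the fact that $\ulMO$ is already known to be a quasi-coherent {\'e}tale sheaf (Theorem~\ref{theo:MOdefi}). Injectivity of the two canonical maps is immediate and just repeats the argument in the proof of Lemma~\ref{lemm:dvretaleOmega}: since $A\to B$ is flat, $B\otimes_A\ulPOmega^q_{(A,f)}\to B\otimes_A\Omega^q_{A[f^{-1}]}$ is injective, since $A\to B$ is {\'e}tale the target is canonically $\Omega^q_{B[f^{-1}]}$, and this injection factors through $\ulPOmega^q_{(B,f)}\hookrightarrow\Omega^q_{B[f^{-1}]}$; the same applies verbatim to $\ulMOmega^q$. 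So the whole content is surjectivity.

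For surjectivity of the $\ulPOmega^q$-comparison one first reduces to $B$ connected, hence (being smooth over $k$) an integral domain, because $\ulPOmega^q$, $\ulMOmega^q$ and $B\otimes_A(-)$ all commute with finite products. Next one checks that {\'e}tale base change preserves the property that $f$ is strict normal crossings: at a point $b\in\Spec B$ lying over $a\in\Spec A$, a regular system of parameters of $\OO_{A,a}$ adapted to $\mX$ remains a regular system of parameters of $\OO_{B,b}$, because $A\to B$ is unramified and preserves dimensions. Hence Lemma~\ref{lemm:Omegaheightone} applies to $B$ and gives $\ulPOmega^q_{(B,f)}=\bigcap_{\mathrm{ht}(\q)=1}\ulPOmega^q_{(B_\q,f)}$ inside $\Omega^q_{\Frac(B)}$. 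For a height-one prime $\q\subset B$ with $\p:=\q\cap A$, the local ring $A_\p$ is a discrete valuation ring (a regular ring localised at a height-one prime) and $A_\p\to B\otimes_A A_\p$ is {\'e}tale, so by Lemma~\ref{lemm:dvretaleOmega} and the quasi-coherence of $\ulPOmega^q$ (locally free of finite rank by Example~\ref{exam:sncOmegaFree}, Lemma~\ref{lem:extPOmega}) one obtains $\ulPOmega^q_{(B_\q,f)}=B_\q\otimes_{A_\p}\ulPOmega^q_{(A_\p,f)}=B_\q\otimes_A\ulPOmega^q_{(A,f)}$. On the other hand $B\otimes_A\ulPOmega^q_{(A,f)}$ is locally free of finite rank over the regular Noetherian domain $B$, so it too equals $\bigcap_{\mathrm{ht}(\q)=1}\bigl(B_\q\otimes_A\ulPOmega^q_{(A,f)}\bigr)$ inside $\Omega^q_{\Frac(B)}$; comparing the two intersections gives $B\otimes_A\ulPOmega^q_{(A,f)}=\ulPOmega^q_{(B,f)}$.

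For $\ulMOmega^q$: by Example~\ref{exam:sncOmegaFree} the modules $\ulPOmega^1_{(A,f)}$ and $\ulPOmega^1_{(B,f)}$ are flat (even locally free), so Lemma~\ref{lem:MOmega-otimes} yields $\ulMOmega^q_{(A,f)}\cong\ulMO_{(A,f)}\otimes_A\ulPOmega^q_{(A,f)}$ and likewise over $B$. Since $\ulMO$ is a quasi-coherent {\'e}tale sheaf (Theorem~\ref{theo:MOdefi}) we have $\ulMO_{(B,f)}=B\otimes_A\ulMO_{(A,f)}$, and combining this with the case of $\ulPOmega^q$ just proved,
\[
B\otimes_A\ulMOmega^q_{(A,f)}\cong\ulMO_{(B,f)}\otimes_B\bigl(B\otimes_A\ulPOmega^q_{(A,f)}\bigr)\cong\ulMO_{(B,f)}\otimes_B\ulPOmega^q_{(B,f)}\cong\ulMOmega^q_{(B,f)}.
\]
The assertion that $\ulPOmega^q$ and $\ulMOmega^q$ are quasi-coherent {\'e}tale sheaves is then formal: on the small {\'e}tale site of $\Spec(A)$ the presheaves $B\mapsto\ulPOmega^q_{(B,f)}$ and $B\mapsto\ulMOmega^q_{(B,f)}$ are identified with the quasi-coherent sheaves attached to the $A$-modules $\ulPOmega^q_{(A,f)}$ and $\ulMOmega^q_{(A,f)}$.

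The main obstacle lies in the surjectivity step: verifying that $B\otimes_A\ulPOmega^q_{(A,f)}$ and $\ulPOmega^q_{(B,f)}$ sit compatibly in $\Omega^q_{\Frac(B)}$ with the same generic rank, that $A_{\q\cap A}$ is a discrete valuation ring with $A_{\q\cap A}\to B_\q$ {\'e}tale for every height-one $\q\subset B$, and the (mild) point that strict normal crossings is preserved by {\'e}tale base change. Once these are in hand, everything comes down to the discrete valuation ring case, which is exactly Lemma~\ref{lemm:dvretaleOmega}, and the $\ulMOmega^q$-statement then follows by tensoring with $\ulMO$.
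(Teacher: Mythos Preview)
Your proposal is correct and follows essentially the same approach as the paper: reduce the $\ulPOmega^q$ statement to the DVR case via the height-one intersection formula (Lemma~\ref{lemm:Omegaheightone}) combined with Lemma~\ref{lemm:dvretaleOmega} and local freeness from Example~\ref{exam:sncOmegaFree}, then deduce the $\ulMOmega^q$ statement by tensoring with $\ulMO$ using Lemma~\ref{lem:MOmega-otimes}. You are slightly more careful than the paper about a few side points (injectivity, reduction to integral $B$, preservation of strict normal crossings under \'etale base change, and the fact that a height-one prime of $B$ contracts to a height-one prime of $A$), but the argument is the same.
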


\begin{proof}
First we prove the claim for $\ulPOmega$.
\begin{align*}
\ulPOmega^q_{(B, f)} 
&\stackrel{\textrm{Lem.\ref{lemm:Omegaheightone}}}{=} 
\bigcap_{\substack{\p \in \Spec(B) \\ ht(\p) = 1}} \ulPOmega^q_{(B_\p, f)} 
\\&\stackrel{\textrm{Lem.\ref{lemm:dvretaleOmega}}}{=} 
\bigcap_{\substack{\p \in \Spec(B) \\ ht(\p) = 1}} B_\p \otimes_{A_\q} \ulPOmega^q_{(A_\q, f)} 
\\&\stackrel{\textrm{Prop.\ref{prop:POmegaLoc}}}{=} 
\bigcap_{\substack{\p \in \Spec(B) \\ ht(\p) = 1}} B_\p \otimes_{A_\q} A_\q \otimes_A \ulPOmega^q_{(A, f)} 
\\&\stackrel{\textrm{}}{=}
\bigcap_{\substack{\p \in \Spec(B) \\ ht(\p) = 1}} B_\p  \otimes_A \ulPOmega^q_{(A, f)} 
\\&\stackrel{}{=}
 B \otimes_A \ulPOmega^q_{(A, f)} ,
\end{align*}
where the last one is because $\ulPOmega^q_{(A, f)} \cong A^{\oplus N}$ for some $N$, by Example~\ref{exam:sncOmegaFree}.

The claim for $\ulMOmega$ follows from that for $\ulPOmega$. Indeed, since $\ulPOmega^q_{(A,f)}$ (resp. $\ulPOmega^q_{(B,f)}$) is a flat (or even free) $A$-module (resp. $B$-module) by Example~\ref{exam:sncOmegaFree}, we have $\ulMOmega^q_{(A,f)} = \ulMO_{(A,f)} \otimes_{A} \ulPOmega^q_{(A,f)}$ and $\ulMOmega^q_{(B,f)} = \ulMO_{(B,f)} \otimes_{B} \ulPOmega^q_{(B,f)}$. 
Moreover, we have $\sqrt{f \OO_A} \cdot \OO_B = \sqrt{f \OO_B}$ by the \'etaleness of $A \to B$. 
Thus, we have 
\begin{align*}
B \otimes_A \ulMOmega^q_{(A,f)} 
&\cong B \otimes_A \ulMO_{(A,f)} \otimes_A \ulPOmega^q_{(A,f)}\\
&\cong (B 
\otimes_A \ulMO_{(A,f)})  \otimes_B (B \otimes_A \ulPOmega^q_{(A,f)}) \\
&\cong \ulMO_{(B,f)} \otimes_B \ulPOmega^q_{(B,f)} \\
&\cong \ulMOmega^q_{(B,f)},
\end{align*}
as desired.
\end{proof}

\section{Transfers on $\ulMOmega$}\label{sec:MOmega-transfer}

In the previous section, we constructed the modulus Hodge sheaves $\ulMOmega^q$ as sheaves on the category $\ulMSm_k$.  
In this section, assuming that the field $k$ has characteristic $0$, we will show that $\ulMOmega^q$ admits ``transfers'', that is, $\ulMOmega^q$ can be extended to a sheaf on the category of modulus correspondences $\ulMCor$.
First we briefly recall what $\ulMCor$ is. 

\begin{defi}
An \emph{elementary modulus correspondence} $\sX \to \sY$ between two modulus pairs $\sX$ and $\sY$ is an elementary finite correspondence $V \in \Cor (\iX,\iY)$ satisfying the following property: let $\hV \subset \hX \times \hY$ be the closure, and $\hV^N \to \hV$ be its normalisation. Let $p:\hV^N \to \hV \to \hX \times \hY \to \hX$ be the natural composition (define $q : \hV^N \to \hY$ similarly).  Then we have 
\begin{enumerate}
\item $p$ is proper,
\item $p^*\mX \geq q^*\mY$. 
\end{enumerate} 

\begin{rema}
If we allow ourselves to use the general notion of modulus pairs from \cite{KM21}, where the interior of a modulus pair is not assumed to be smooth, then the condition (2) above is equivalent to that $q$ induces a morphism of modulus pairs $(\ol{V}^N, p^*\mX) \to \sY$. 
\end{rema}

A \emph{modulus correspondence} $\sX \to \sY$ is a $\Z$-linear finite formal sum of elementary modulus correspondences from $\sX$ to $\sY$. 
Write $\ulMCor (\sX,\sY)$ for the group of modulus correspondences from $\sX$ to $\sY$. Then we have $\ulMCor (\sX,\sY) \subset \Cor (\iX,\iY)$, and one can show that the composition of finite correspondences induce that of modulus correspondences. 
Thus, we obtain the category $\ulMCor = \ulMCor_k$ consisting of modulus pairs and modulus correspondences. 
Moreover, one can easily check that the graph functor $\Sm \to \Cor ; f \mapsto \Gamma_f$ restricts to $\ulMSm \to \ulMCor$ (see \cite{kmsy1} for the detailed proofs of these facts).
\end{defi}

Recall from \cite{LcW09} that the Hodge sheaf $\Omega^q$ admits the structure of a presheaf with transfers.
That is, it extends to an additive presheaf on $\Cor$.
The following theorem shows that the transfers on $\Omega^q$ is inherited by $\ulMOmega^q$, at least in characteristic $0$:

\begin{thm}\label{thm:transfer-MOmega}
Suppose $\mathrm{char} (k)=0$.
Let $\sX,\sY \in \ulMSm$ with $\sX$ normal crossing and $\hY$ normal. 
Then, for any modulus correspondence $\ulMCor (\sX,\sY) \subset \Cor (\iX,\iY)$ and for any $q \geq 0$, there exists a unique map $\ulMOmega^q (\sY) \to \ulMOmega^q (\sX)$ which makes the following diagram commute:
\[\xymatrix{
\ulMOmega^q (\sY) \ar[r] \ar[d] & \ulMOmega^q (\sX) \ar[d] \\
\Omega^q (\iY) \ar[r]^{\alpha^*} & \Omega^q (\iX) ,
}\] 
where the vertical maps are natural inclusions, and the horizontal bottom map is induced by the transfer structure on $\Omega^q$.
\end{thm}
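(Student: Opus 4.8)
The strategy is to reduce the existence and uniqueness of the transfer map to a purely local statement about how $\ulMOmega^q$ sits inside $\Omega^q$ on the interior, and then to exploit the fact that a modulus correspondence satisfies the modulus inequality $p^*\mX \geq q^*\mY$ after normalisation. First I would note that uniqueness is immediate: since $\hX$ is reduced (being normal crossing, hence in particular regular after an \'etale cover), the interior $\iX$ is dense in $\hX$, so $\ulMOmega^q(\sX) \hookrightarrow \Omega^q(\iX)$ is injective (it is even injective by construction as a submodule of $\Omega^q_{A[f^{-1}]}$, cf.\ Definition~\ref{def:PandMOmega}), and therefore any lift of $\alpha^*$ along the vertical inclusions is unique if it exists. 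So the whole content is existence, i.e.\ showing that $\alpha^*\bigl(\ulMOmega^q(\sY)\bigr) \subseteq \ulMOmega^q(\sX)$ inside $\Omega^q(\iX)$.

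For existence I would first reduce to the case of an elementary modulus correspondence $V$, by $\Z$-linearity, and then localise. Using Corollary~\ref{cor:qcoh-MOmega} and Proposition~\ref{prop:OmegaEtaleQC}, the sheaf $\ulMOmega^q_{\sX}$ is quasi-coherent on the small \'etale (hence Zariski) site of $\hX$, and by Lemma~\ref{lemm:Omegaheightone} its sections over a smooth affine with strict normal crossings modulus are the intersection of the sections over the height-one localisations. Hence it suffices to check the inclusion $\alpha^*\bigl(\ulMOmega^q(\sY)\bigr) \subseteq \ulMOmega^q(\sX)$ after localising $\hX$ at each height-one point, i.e.\ we may assume $\hX = \Spec(\OO_{\hX,x})$ is the spectrum of a dvr (the case where $x$ is not on $|\mX|$ being trivial since there $\ulMOmega^q$ agrees with $\Omega^q$ on an open containing the generic point of $V$, or rather with the usual $\Omega^q$-transfer). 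The standard description of the $\Omega^q$-transfer from \cite{LcW09}, \cite{LcW09} via a correspondence $V$ expresses $\alpha^*\omega$ as a trace $\mathrm{Tr}_{\hV^N/\hX}$ of the pullback $q^*\omega$ along $\hV^N \to \hX$; concretely $\alpha^* = p_* \circ q^*$ where $p_* $ is the trace for the finite map $p:\hV^N \to \hX$ (after base change and normalisation) and $q^* $ is the pullback of differentials.

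The heart of the argument, and the step I expect to be the main obstacle, is the compatibility of this trace with the modulus twist. Concretely: given $\omega \in \ulMOmega^q(\sY) = \ulMO_{\mY}\cdot \ulPOmega^q_{\sY}$, we must show $\mathrm{Tr}_{\hV^N/\hX}(q^*\omega) \in \ulMO_{\mX}\cdot\ulPOmega^q_{\sX}$. I would split $\omega$ as $\frac{a}{g}\cdot\eta$ with $a \in \sqrt{(g)}$ (where $\mY = (g)$) and $\eta \in \ulPOmega^q_{\sY}$ a sum of wedges of $\dlog$'s and $d$'s of elements of $\OO_{\hY}$. The pullback $q^*\eta$ lies in $\ulPOmega^q_{(\hV^N,q^*\mY)}$ by Lemma~\ref{lemm:ulPOmegaFunct}, and since $p^*\mX \geq q^*\mY$ it lies in $\ulPOmega^q_{(\hV^N,p^*\mX)}$; likewise $q^*(a/g) \in \ulMO_{(\hV^N,q^*\mY)} \subseteq \ulMO_{(\hV^N,p^*\mX)}$ again by the modulus inequality, since $\sqrt{(q^*g)} = \sqrt{(q^*\mY)} \subseteq \sqrt{(p^*\mX)}$ and the denominator only improves. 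So $q^*\omega \in \ulMOmega^q_{(\hV^N,p^*\mX)}$, and what remains is to show that the trace $p_*$ for the finite map $p:\hV^N\to\hX$ (which exists since $p$ is finite onto a normal, in fact regular, base in characteristic zero, and $\hV^N$ is normal) carries $\ulMOmega^q_{(\hV^N,p^*\mX)}$ into $\ulMOmega^q_{\sX}$. Writing $\mX = (f)$ on the dvr $\hX$, an element of $\ulMOmega^q_{(\hV^N,p^*\mX)}$ has the shape $\frac{b}{p^*f}\cdot\zeta$ with $b \in \sqrt{(p^*f)}$ and $\zeta \in \ulPOmega^q_{(\hV^N,p^*\mX)}$; using $A$-linearity of the trace with respect to $A = \OO_{\hX,x}$, we get $p_*(\frac{b}{p^*f}\zeta) = \frac{1}{f}p_*(b\cdot\zeta)$, so it suffices to show (i) $p_*$ sends $\ulPOmega^q_{(\hV^N,p^*\mX)}$ into $\ulPOmega^q_{\sX}$ — this should follow because trace of a $\dlog$-form is again expressible via $\dlog$'s of norms, together with $A$-linearity and the standard trace formula for $\Omega^q$ on dvrs in \cite{LcW09} — and (ii) that if $b \in \sqrt{(p^*f)}$ then $p_*(b\cdot\zeta)$ has its ``$\ulMO$-part'' in $\sqrt{(f)}\cdot A$, i.e.\ the trace of something vanishing on $\mathrm{supp}(p^*f)$ vanishes on $\mathrm{supp}(f) = |\mX|$; this last point is where one uses that $p^{-1}(|\mX|) = \mathrm{supp}(p^*\mX)$ as sets, so $b|_{p^{-1}|\mX|} = 0$ forces $p_*(b\zeta)|_{|\mX|}=0$ by continuity of the trace, hence lands in the defining ideal of $|\mX|$, i.e.\ in $\sqrt{(f)}$. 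Assembling (i) and (ii) gives $\alpha^*\omega = \frac1f p_*(b\zeta) \in \frac1f\sqrt{(f)}\cdot\ulPOmega^q_{\sX} = \ulMOmega^q_{\sX}$, which is exactly the claim. I expect the delicate points to be making the trace formula for $\ulPOmega^q$ (step (i)) precise — one likely needs the normal crossing hypothesis on $\sX$ here and possibly a further reduction to the case where $\hV^N$ is also regular with suitable normal crossings, via resolution of singularities in characteristic zero — and checking that the trace of the logarithmic part really stays logarithmic, rather than acquiring worse poles.
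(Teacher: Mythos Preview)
Your reduction to height-one points and the $q^*$ step are fine, but there is a genuine gap in the pushforward step, specifically in your justification of (ii). You argue that since $b \in \sqrt{(p^*f)}$ vanishes on $p^{-1}|\mX|$, ``continuity of the trace'' forces $p_*(b\zeta)$ to vanish on $|\mX|$. This fails because $\zeta \in \ulPOmega^q_{(\hV^N, p^*\mX)}$ typically has a \emph{pole} along $p^{-1}|\mX|$: already for $A$ a dvr with uniformiser $t$, $B = A[\pi]/(\pi^e - t)$ totally ramified of degree $e > 1$, and $\mX = (t)$, one has $\dlog(\pi) = \tfrac{1}{e}\pi^{-e}\,dt \in \ulPOmega^1_{(B,t)}$ with a pole of order $e$, while $b \in \pi B$ vanishes only to order~$1$, so $b\zeta$ still has a pole of order $e{-}1$ and the premise of your vanishing argument is simply false. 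The inclusion $p_*(b\zeta) \in \sqrt{(f)}\cdot\ulPOmega^q_{\sX}$ does hold in this example (via $\mathrm{Tr}_{B/A}(\pi^j) = 0$ for $e\nmid j$), but your argument does not prove it; a correct proof must track how the trace interacts with ramification. More structurally, the split into (i) and (ii) is awkward because the trace is only $A$-linear, not $B$-linear: knowing $p_*(\zeta) \in \ulPOmega^q_{\sX}$ tells you nothing about $p_*(b\zeta)$ for $b \in B \setminus A$, so (i) does not feed into (ii) in the way you suggest.

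The paper sidesteps the trace on log-forms entirely. Using \cite[Recollection~2.1]{KelMi1} it produces a roof $\sX \leftarrow \sW \to \sY$ with $\hW$ normal, $\hW \to \hX$ proper surjective, $\mW = p^*\mX$, and $\sW \to \sY$ a (sum of) genuine morphism(s) of modulus pairs; then $\ulMOmega^q(\sY) \to \ulMOmega^q(\sW)$ is pure functoriality (Lemma~\ref{lemm:ulPOmegaFunct}), and the remaining content is that the square with vertices $\ulMOmega^q(\sX)$, $\ulMOmega^q(\sW)$, $\Omega^q(\iX)$, $\Omega^q(\iW)$ is Cartesian (Lemma~\ref{lem:MOmegaCart}). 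That lemma is proved by reducing to dvrs and establishing an inclusion $\ulMOmega^q_{\sW} \subseteq \pi^{1-e}\,\ulMO_{\sW}\cdot\ulPOmega^q_{\sX}$, after which the Cartesian property becomes a valuation computation for $\ulMO$ alone --- so the trace map on forms with log poles is never analysed directly.
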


\begin{rema}
When $\sX$ and $\sY$ are strict normal crossing and $\hX$ and $\hY$ are proper over a field $k$ of characteristic $0$, then the above theorem is a direct consequence of the result of R\"ulling-Saito, \cite[Cor.~6.8]{RS18}. 
Indeed, under these conditions, we have $\tilde{\Omega}^q (\sX) = \ulMOmega^q (\sX)$, where $\tilde{\Omega}^q := \omega^{\CI} \Omega^q$ is a cube invariant sheaf on $\ulMCor$ defined as ``the largest cube invariant model of $\Omega^q$'' (See \cite[Proposition 3.26]{BKS22} for the definition of $\omega^{\CI}$). Since the left hand side has an action of modulus correspondences, the right hand side also does. And we can also drop the properness condition by using the fact that for any modulus pair $\sX$ and for any proper compactification $\hX \subset \hZ$ such that $\mX$ extends to an effective Cartier divisor $X^{\infty}_1$ and $\hZ \setminus \hX = |D|$ for an effective Cartier divisor $D \subset \hZ$, one has $\ulMOmega^q(\hX) = \colim_{n \geq 1} \ulMOmega^q (\hZ,X^{\infty}_1+nD)$.

Although this case is enough for our main purpose, we will provide a different proof of Theorem \ref{thm:transfer-MOmega} for the sake of completeness and a bit more generality.
\end{rema}

\begin{proof}
By \cite[Recollection 2.1]{KelMi1}, there is a morphism of modulus pairs $\sW \to \sX$ such that $\hW$ is normal, integral, $\hW \to \hX$ is proper surjective, and the composition $\sW \to \sX \to \sY$ is a finite sum of morphisms of modulus pairs. 
Normalising, we can assume $\hW$ is integrally closed in $\iW$. 
As such, the morphism $(\ast)$ in the diagram
\[ \xymatrix{
\Omega^q(\iY) \ar[r] &\Omega^q(\iX) \ar[r]^{\subseteq} & \Omega^q(\iW) \\
\ulMOmega^q(\sY) \ar@{-->}[r]^{(\ast\ast)} \ar@/_12pt/@{-->}[rr]_{(\ast)} \ar[u]_{\cup|} &\ulMOmega^q(\sX) \ar[r] \ar[u]_{\cup|} & \ulMOmega^q(\sW) \ar[u]_{\cup|}
} \]
certainly exists, and is unique by injectivity of $\ulMO(\sW) \subseteq \OO(\iW)$. By Lem.~\ref{lem:MOmegaCart} below, the square on the right is Cartesian, so the morphism $(\ast\ast)$ also exists and is unique.
\end{proof}

By the above argument, Theorem \ref{thm:transfer-MOmega} is reduced to the following lemma.

\begin{lemm}\label{lem:MOmegaCart}
Suppose $\mathrm{char}k = 0$.
Let $p : \sY \to \sX$ be an ambient morphism with $\hY$ normal and $\sX$ normal crossing such that $p : \hY \to \hX$ is generically \'etale, dominant, surjective morphism such that $\mY = p^*\mX$. 
Then, for any $q \geq 0$, the square 
\[\xymatrix{
\ulMOmega^q (\sX) \ar[r] \ar[d] & \ulMOmega^q (\sY) \ar[d] \\
\Omega^q (\iX) \ar[r] & \Omega^q (\iY) 
}\]
is Cartesian. 
In particular, there exists a unique presheaf on $\ulMSm$ whose restriction to $\ulPSm$ agrees with $\ulMOmega^q$ on integrally closed modulus pairs. 
\end{lemm}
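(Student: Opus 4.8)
The statement to prove is that the square comparing $\ulMOmega^q(\sX) \to \ulMOmega^q(\sY)$ with $\Omega^q(\iX) \to \Omega^q(\iY)$ is Cartesian. Since everything sits inside $\Omega^q_{\Frac(\hY)}$ (as $\hY$ is integral and normal, $\Omega^q(\iY)$ is torsion-free), being Cartesian just means the identity
\[
\ulMOmega^q(\sY) \cap \Omega^q(\iX) = \ulMOmega^q(\sX)
\]
inside $\Omega^q(\iY)$, where $\Omega^q(\iX)$ is viewed as a subgroup via $p^*\colon \Omega^q(\iX) \hookrightarrow \Omega^q(\iY)$ (injective since $p$ is dominant and $\chr k = 0$). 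The inclusion $\supseteq$ is clear from functoriality (Lemma~\ref{lemm:ulPOmegaFunct} together with $\mY = p^*\mX$, which gives $\ulMO_{(A,f)} \subseteq \ulMO_{(B,g)}$). So the content is the reverse inclusion: an element $\omega \in \Omega^q(\iX)$ whose image in $\Omega^q(\iY)$ lies in $\ulMOmega^q(\sY)$ must already lie in $\ulMOmega^q(\sX)$.

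First I would reduce to a local, codimension-one question. By Lemma~\ref{lemm:Omegaheightone}, applied on $\hX$ (which is a normal crossings modulus pair, hence \'etale-locally of the form in Example~\ref{exam:sncOmegaFree}), we have $\ulMOmega^q(\sX) = \bigcap_{\p, \mathrm{ht}(\p)=1} \ulMOmega^q_{(\OO_{\hX,\p}, f)}$; and $\Omega^q(\iX)$, being locally free, is likewise the intersection of its localisations at height-one primes of $\hX$ (using that $\hX$ is regular, hence $\Omega^q$ is reflexive). So it suffices to show: for each height-one prime $\p$ of $\hX$, if $\omega \in \Omega^q_{\OO_{\hX,\p}}[\tfrac1f]$ maps into $\ulMOmega^q_{(\OO_{\hY}\otimes\cdots)}$ at every height-one prime of $\hY$ over $\p$, then $\omega \in \ulMOmega^q_{(\OO_{\hX,\p}, f)}$. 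Since $\OO_{\hX,\p}$ is a DVR (regular local of dimension one), and $p$ is finite generically \'etale with $\hY$ normal, the semi-local ring $B_\p := \OO_{\hY}\otimes_{\OO_{\hX}}\OO_{\hX,\p}$ has all its localisations DVRs with $\m_B = \m_A B$ up to ramification; but here $\mY = p^*\mX$ pulls back the divisor exactly, so the key numerical point is that ramification does not create extra poles in a way incompatible with $\ulMO$. Concretely, on a DVR $A$ with uniformiser $t$, $f = t^r$ (or $f$ a unit if $\p \notin |\mX|$), one has $\ulMOmega^q_{(A,t^r)} = t^{-(r-1)}\Omega^q_A(\log t)$ by Example~\ref{exam:sncOmegaFree}, and similarly downstairs; the statement becomes a clean valuation estimate comparing the $t$-adic and $\pi$-adic orders of $\omega$, which I would check by writing $\omega$ in terms of $dt/t$ and $dt$ and using $v_B(t) = e\cdot v_A(t)$, $v_B(f) = e\cdot v_A(f)$, so that the ``$\sqrt{\ }\otimes I^{-1}$'' normalisations match on the nose.

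The case $\p \notin |\mX|$ (where $\ulMO_{(A,f)} = A$ and the log structure is trivial) is the observation that $\Omega^q_A = \Omega^q_A$, i.e.\ for an \'etale-enough or at worst tamely ramified extension of DVRs, $\Omega^q_A = (B \otimes_A \Omega^q_A) \cap \Omega^q_{\Frac(A)}$; here one uses $\chr k = 0$ crucially (tame ramification is automatic), together with the standard fact that for a finite extension of DVRs in char $0$, a differential form on $\Frac(A)$ that is regular on $B$ is regular on $A$ — this is essentially the statement that the different is an effective divisor and does not help a form get worse downstairs, or more elementarily that $v_A(\omega) \geq 0$ iff $v_B(p^*\omega) \geq 0$ for forms pulled back from $A$. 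The main obstacle I anticipate is precisely this descent of regularity of differential forms along a ramified (but tame) extension of DVRs, and matching it with the twist $\sqrt{I}\otimes I^{-1}$: one must be careful that the ceiling/floor implicit in $\ulMO$ (the ``$a \in \sqrt{(f)}$'' condition, i.e.\ $v(a) \geq 1$ whenever $v(f) \geq 1$) behaves correctly under multiplication of valuations by $e$, which it does because the condition is ``$\geq 1$'' not ``$\geq$ the full order of $f$''. Once the DVR case is settled, reassembling via the two height-one-intersection descriptions and the final uniqueness/extension-to-$\ulMSm$ clause (which follows formally, since any morphism in $\ulMSm$ factors through an abstract admissible blow-up and the Cartesian square lets one descend the transfer unambiguously, exactly as in the proof of Theorem~\ref{thm:transfer-MOmega}) finishes the argument.
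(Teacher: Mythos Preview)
Your overall approach coincides with the paper's: reduce via Lemma~\ref{lemm:Omegaheightone} to the case where $\hX$ and $\hY$ are spectra of DVRs $A \subset B$ with ramification index $e$, and finish with a valuation comparison. The paper's endgame is exactly the Cartesian square on value groups $e\NN - en + e \subset \NN - en + 1$ inside $e\ZZ \subset \ZZ$ that your sketch is pointing toward.

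What you flag as the main obstacle is indeed the crux, and your plan does not actually resolve it. The difficulty is that $\ulMOmega^q(\sY)$ is by definition $\ulMO_{\sY}\cdot\ulPOmega^q_{\sY}$, and there is no reason for $\ulPOmega^q_{\sY}$ to equal $B\otimes_A\ulPOmega^q_{\sX}$ once $A\to B$ is ramified (that equality is Lemma~\ref{lemm:dvretaleOmega}, which needs \'etaleness). So you cannot simply read off membership in $\ulMOmega^q(\sY)$ as a ``$\pi$-adic order of the coefficients'' relative to the basis of $\ulPOmega^q_{\sX}$. The paper handles this by inserting an intermediate module and proving the inclusion
\[
\ulMOmega^q_{\sY} \;\subseteq\; \pi^{1-e}\,\ulMO_{\sY}\cdot \ulPOmega^q_{\sX},
\]
and this is where $\mathrm{char}\,k=0$ is used concretely: writing $\OO_L=\OO_K[\pi]$ and $d\pi=\tfrac{1}{e}\pi^{1-e}\,d(\pi^e)$ forces every generator $dy$ or $\dlog y$ of $\ulPOmega^1_{\sY}$ into $\pi^{1-e}\,\OO_{\hY}\cdot\ulPOmega^1_{\sX}$. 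With this inclusion in hand, the Cartesian question reduces (after choosing the free basis of $\ulPOmega^q_{\sX}$ from Example~\ref{exam:sncOmegaFree}) to $\binom{n}{q}$ copies of the scalar square $\ulMO_{\sX}=\pi^{1-e}\ulMO_{\sY}\cap K$ inside $L$, which is precisely the clean valuation check you describe. So your outline is correct, but the missing ingredient is this bounding of $\ulPOmega^q_{\sY}$ in terms of $\ulPOmega^q_{\sX}$; once you supply it, the rest of your sketch goes through verbatim.
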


\begin{proof}
The case $q=0$ is known by \cite[Proposition 4.3]{KelMi1}. We consider the case $q \geq 1$.
Notice that all objects in the above diagram are subobjects of $\Omega^q (\iY)$. 
By Lem.~\ref{lemm:Omegaheightone} and by the assumption that $\sX$ is normal crossing, it suffices to show that for any codimension one point $x \in \hX$ and for any point $y \in \hY$ lying over $x$, the square 
\[\xymatrix{
\ulMOmega^q (\sX_x) \ar[r] \ar[d] & \ulMOmega^q (\sY_x) \ar[d] \\
\Omega^q (\iX_x) \ar[r] & \Omega^q (\iY_y) 
}\]
is Cartesian, where $\sX_x := (\Spec \OO_{X,x}, \mX |_{\Spec \OO_{X,x}})$ (and similarly for $\sY_y$). Note that $y \in \hY$ is also of codimension one, and hence $\hY_y$ is a spectrum of a dvr since $\hY$ is normal by assumption.
By replacing $\sX$ and $\sY$ by $\sX_x$ and $\sY_y$ respectively, we may and do assume from the beginning that $\hX$ and $\hY$ are the spectra of dvr's. 

Since $p$ is generically \'etale by assumption, we have $\iX = \Spec K$ and $\iY = \Spec L$ for a finite separable field extension $L/K$ of dvf's. Then we have $\OO_{\hX} = \OO_K$ and $\OO_{\hY} = \OO_L$, where $\OO_K,\OO_L$ denote the ring of integers of $K,L$. We have $K \cap \OO_L = \OO_K$, and there exists $\pi \in L$ such that $\OO_L = \OO_K [\pi]$.
If $e$ denotes the ramification index of $L/K$, then $\pi^e \in K$ and it is a uniformiser of $K$.

Consider the following diagrams:
\ 

\[ \xymatrix@R=12pt{
\ulMOmega^q(\sY) \ar@{..>}[r]_-{(*)}^-{\subseteq} \ar@/^18pt/[rr] & 
\tfrac{\pi}{\pi^{e}} \cdot \ulMO_{\sY} \cdot \ulPOmega^q_{\sX} 
\ar[r]^-{\subseteq} \ar@{}[dr]|{(A)}& 
\Omega^q(\iY)
\\
\ulMOmega^q(\sX) \ar@{=}[r] \ar[u] & 
\ulMOmega^q(\sX) \ar[r] \ar[u] & 
\Omega^q(\iX) \ar[u]
} 
\]
and
\[\xymatrix{
\tfrac{\pi}{\pi^{e}} \cdot \ulMO_{\sY}^{\oplus \binom{n}{q}
} 
\ar[r] \ar@{}[dr]|{(B)}& 
L^{\oplus \binom{n}{q}} 
\\
\ulMO_{\sX}^{\oplus \binom{n}{q}} \ar[r] \ar[u] &
K^{\oplus \binom{n}{q}} \ar[u]
}\]
where all arrows except $(*)$, whose existence we prove blow, are natural inclusion maps. 
The goal is to show the outside square on the left is cartesian. The strategy is to show that the inclusion $(*)$ exists, and that the square $(A)$ is isomorphic to the square $(B)$, and that $(B)$ is cartesian.

We begin with the isomorphism $(A) \cong (B)$. By definition, Def.~\ref{defi:NC}, there exist $t_1, \dots, t_n \in \OO(\hX)$
such that the modulus is given by $t_1^{r_1} \dots t_i^{r_i}$ and 
\begin{equation} \label{equa:KOmegaBasis}
\Omega^1_{K} = K \dlog(t_1) \oplus \dots \oplus K \dlog(t_i) \oplus K dt_{i+1} \oplus \dots \oplus  K dt_{n} \cong K^n,
\end{equation}
and hence $\Omega^q \cong K^{\binom{n}{q}}$.
Similarly, 
\[
\ulPOmega^1(\sX) \cong \OO_{\hX}^{\oplus n} \subseteq K^{\oplus n},
\]
and hence $\ulPOmega^q(\sX) \cong \OO_{\hX}^{\oplus \binom{n}{q}}$ (cf. Example~\ref{exam:sncOmegaFree}).
Finally, by definition, we have 
\[
\ulMOmega^q(\sX) = \ulMO(\sX) \cdot \ulPOmega^q(\sX) \cong \ulMO(\sX) \cdot \OO_{\hX}^{\oplus \binom{n}{q}} \cong \ulMO (\sX)^{\oplus \binom{n}{q}}.
\]
Thus we obtained a natural isomorphism of the commutative diagrams $(A) \cong (B)$.

\ 

Next we prove the existence of the inclusion $(*)$. 
\begin{claim}
There are canonical inclusions 
\begin{align}
\Omega^q_{\hY} &\subseteq \pi^{1-e} \OO_{\hY} \cdot \Omega^q_{\hX} \label{eq:OOO1} \\
\ulPOmega^q_{\sY} &\subseteq \pi^{1-e} \OO_{\hY} \cdot \ulPOmega^q_{\sX} \label{eq:OOO2} \\
\ulMOmega^q_{\sY} &\subseteq \pi^{1-e} \ulMO_{\sY} \cdot \ulPOmega^q_{\sX} \label{eq:OOO3}
\end{align}
where $e$ is the ramification index, and hence $\pi^e$ is a uniformiser for $\OO_{\hX}$.
\end{claim} 

\begin{proof}
The inclusion \eqref{eq:OOO3} follows from \eqref{eq:OOO2} since $\ulMOmega^q = \ulMO \cdot \ulMOmega^q$ by definition. It remains to prove \eqref{eq:OOO1} and \eqref{eq:OOO2}. 
Notice that $\Omega^q_{\hY}$ is generated as an $\OO_{\hY}$-module by elements of the form $\omega = dy_1 \wedge \cdots \wedge dy_q$ for some $y_1,\dots,y_q \in \OO_{\hY}$. Since $\OO_{\hY} = \OO_{\hX} [\pi]$, we have $y_j = \sum_{i=0}^{e-1}a_{ij}\pi^i$ for some $a_{ij} \in \OO_{\hX}$. 
We clearly have 
$d(\sum_{i = 0}^{e-1} a_{ij} \pi^i) = \sum_{i = 0}^{e-1} a_{ij} i \pi^{i-1} d\pi + \sum_{i = 0}^{e-1} \pi^i da_{ij}$. 
Noting $d\pi \wedge d\pi = 0$, there is at most one $d\pi$ remaining in the wedge product $\omega$. Therefore, we are reduced to proving that $d\pi$ belongs to $\pi^{1-e} \OO_{\hY} \cdot \Omega^1_{\hX}$. This follows from $d\pi^e = e \pi^{e-1}d\pi$, or equivalently, $d\pi = \tfrac{1}{e} \pi^{1-e} d\pi^e$.

Finally we prove \eqref{eq:OOO2}. By Remark \ref{rema:sumPOmega}, the $\OO_{\hY}$-module $\ulPOmega^q_{\sY}$ is generated by elements of the form $\omega = \dlog y_1 \wedge \cdots \wedge \dlog y_m \wedge dy_{m+1} \wedge \cdots \wedge dy_q$ for some $y_1,\dots,y_m \in \OO_{\hY} \cap \OO_{\iY}^*$ and $y_{m+1},\dots,y_n \in \OO_{\hY}$. We want to prove $\omega \in \pi^{1-e} \OO_{\hY} \cdot \ulPOmega^q_{\sX}$. For $j=1,\dots,m$, writing $y_j = u_j \pi^{n_j}$ for some $u_j \in \OO_{\hY}^*$, we have 
\[
\dlog y_j = \dlog(u_j) + n_j \dlog(\pi) = u_j^{-1} du_j + n_j e^{-1} \dlog(\pi^e) .
\]
Since the second term belongs to $\ulPOmega^1_{\sX}$, we are reduced to the case $y_1,\dots,y_m \in \OO_{\hY}^*$ (by induction on $q$).
In this case, the element $\omega$ belongs to $\Omega^1_{\hY}$, and hence the assertion follows from \eqref{eq:OOO1}.
\end{proof}

Finally, we show that the square $(B)$ is Cartesian. 
Clearly it suffices to prove that the square 
\begin{equation} \label{sq:key1} \begin{gathered} \xymatrix{
\tfrac{\pi}{\pi^e} \cdot \ulMO_{\sY} \ar[r] & L \\
\ulMO_{\sX} \ar[r] \ar[u] & K \ar[u]
} \end{gathered}\end{equation}
is Cartesian.
The uniformiser $\pi$ induces an identification of value groups $\Gamma_{K} \subseteq \Gamma_L$ with $e\ZZ \subseteq \ZZ$. Since $\OO_K = K \cap \OO_L$ and $\OO_K^* = K \cap \OO_L^*$, and every element of $K^*$ (resp. $L^*$) can be written uniquely as $u\pi^{ei}$ (resp. $u \pi^i$) for some $i \in \ZZ$ and $u \in \OO_K^*$ (resp. $u \in \OO_L^*$), to show that $(B)$ is Cartesian it suffices to show that the induced morphism on value groups is Cartesian. Let $f$ be a local parameter for $\mX$ and write $v(f) = en$. We have 
\begin{align*}
\ulMO_\sX &= \sqrt{f \OO_K} (f \OO_K)^{-1} = \pi^e \OO_K (\pi^{en}\OO_K)^{-1} = \pi^{e - en} \OO_K, \\
\ulMO_\sY &= \sqrt{f \OO_L} (f  \OO_L)^{-1} = \pi \OO_L (\pi^{en} \OO_L)^{-1} = \pi^{1 - en} \OO_L.
\end{align*}
This shows that the image of the above square \eqref{sq:key1} with 0 removed under $v$ is 
\[ \xymatrix{
\NN {-} en {+} 1
\ar[r] & 
\ZZ \\
e\NN {-} en {+} e
 \ar[u]  \ar[r] & 
e \ZZ \ar[u] 
} \]
which is clearly Cartesian. Therefore, the original square \eqref{sq:key1} is also Cartesian.
\end{proof}

Thus the proof of Theorem \ref{thm:transfer-MOmega} is completed.

\section{Cube invariance of the modulus Hodge cohomology}\label{sec:ciOmega}

In this section, we prove that the cube invariance of the cohomology groups of the modulus Hodge sheaves.

\begin{prop}\label{prop:CI-POmega}
Let $(A,f)$ be a modulus $k$-ring with $A$ integral (and $k$ still of characteristic zero). Then the canonical sequence
\begin{equation}\label{eq:POmegaexact}
0 
\to \ulPOmega^q_{(A,f)} 
\to \ulPOmega^q_{(A[t],f)} 
\oplus \ulPOmega^q_{(A[\frac{1}{t}],f/t)} 
\to \ulPOmega^q_{(A[t, \frac{1}{t}],f)} 
\to 
0
\end{equation}
is a split short exact sequence of $A$-modules when $q=0,1$.

Suppose moreover that 
$\ulPOmega^1_{(A,f)}$ (resp. $\ulPOmega^1_{(A[t],f)}$, $\ulPOmega^1_{(A[\frac{1}{t}],f/t)}$, $\ulPOmega^1_{(A[t, \frac{1}{t}],f)}$) is flat over $A$ (resp. $A[t]$, $A[\tfrac{1}{t}]$, $A[t,\tfrac{1}{t}]$).
Then Eq.\eqref{eq:POmegaexact} is a split short exact sequence of $A$-modules for any $q \geq 0$.
\end{prop}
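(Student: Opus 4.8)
The plan is to reduce the general-$q$ statement to the flat case by a d\'evissage, and then to establish the flat case using Lemma~\ref{lem:extPOmega} together with the split exactness for $q=0,1$. First I would treat the case $q=0$: here Eq.\eqref{eq:POmegaexact} becomes the sequence $0 \to A \to A[t] \oplus A[\tfrac1t] \to A[t,\tfrac1t] \to 0$, which is the classical covering sequence for $\mathbb{A}^1 = U_0 \cup U_\infty$ (with $U_0 = \Spec A[t]$, $U_\infty = \Spec A[\tfrac1t]$); it is split (e.g. by sending $A[t,\tfrac1t] \to A[t]$ truncating negative powers of $t$, or more symmetrically one checks directly on $A$-module generators). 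For $q=1$, I would use the description of elements of $\ulPOmega^1$ from Remark~\ref{rema:sumPOmega}: a $1$-form over $A[t,\tfrac1t]$ is a sum of terms $a\,\dlog(b)$ and $a\,dc$. Decomposing in the variable $t$, the only genuinely new generator relative to $A$ is $\dlog(t) = \tfrac{dt}{t}$, which is a unit-differential form both in $\ulPOmega^1_{(A[t],f)}$ (since $t \in A[t] \cap A[t][\tfrac1f]^*$ once we invert $f$... wait, we need $t$ a unit; rather $\dlog t$ lives in $\ulPOmega^1_{(A[t],f)}$ only after... ) — more carefully, $\dlog(t)$ lies in $\ulPOmega^1_{(A[\frac1t], f/t)}$ because $t$ becomes a unit there, and $dt$ lies in $\ulPOmega^1_{(A[t],f)}$. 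So on the level of the Zariski–Mayer–Vietoris style sequence for $\Omega^1(\log)$ of the cube relative to $A$, exactness reduces to the exactness of the $q=0$ sequence together with the standard short exact sequences for $\Omega^1$ of $\mathbb{A}^1$ and $\mathbb{G}_m$ over $A$, and one writes down an explicit $A$-linear splitting by the same truncation map applied coefficient-wise. I would present this as a direct (if slightly tedious) verification that the sequence of $A$-modules is isomorphic to a direct sum of copies of the $q=0$ sequence and the standard $\Omega^1$-sequences, all of which are split exact.

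For the general-$q$ case under the flatness hypotheses, I would invoke Lemma~\ref{lem:extPOmega}: flatness of $\ulPOmega^1$ over the respective base rings gives natural isomorphisms $\ulPOmega^q_{(A,f)} \cong \bigwedge_A^q \ulPOmega^1_{(A,f)}$, and likewise over $A[t]$, $A[\tfrac1t]$, $A[t,\tfrac1t]$. Now from the $q=1$ split short exact sequence
\[
0 \to \ulPOmega^1_{(A,f)} \to \ulPOmega^1_{(A[t],f)} \oplus \ulPOmega^1_{(A[\frac1t],f/t)} \to \ulPOmega^1_{(A[t,\frac1t],f)} \to 0
\]
I would extract the underlying $A$-module direct sum decomposition: since it is split, writing $P = \ulPOmega^1_{(A,f)}$ we get (as $A$-modules) $\ulPOmega^1_{(A[t],f)} \cong P \oplus P'$, $\ulPOmega^1_{(A[\frac1t],f/t)} \cong P \oplus P''$, $\ulPOmega^1_{(A[t,\frac1t],f)} \cong P \oplus P' \oplus P''$, where the maps are the evident inclusions and the quotient map. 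Taking $q$-th exterior powers over $A$ and using that $\bigwedge^q$ of a direct sum is the direct sum of tensor products $\bigwedge^a \oplus \otimes \bigwedge^b$ over $a+b=q$ (valid for flat, hence for all since each piece is flat), the $q$-th exterior powers of the three middle-and-right terms assemble into a short exact sequence
\[
0 \to \textstyle\bigwedge^q_A P \to \bigwedge^q_A(P\oplus P') \oplus \bigwedge^q_A(P \oplus P'') \to \bigwedge^q_A(P\oplus P'\oplus P'') \to 0,
\]
which is split exact by a purely formal argument (it is the $q$-th exterior power of a split exact sequence of flat modules, and such functors preserve split exact sequences because they preserve the retraction). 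By Lemma~\ref{lem:extPOmega} this is exactly Eq.\eqref{eq:POmegaexact}, completing the proof.

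One subtlety to handle carefully: I must make sure the splitting of the $q=1$ sequence is $A$-linear (not merely additive) so that applying $\bigwedge^q_A$ is legitimate and the resulting splitting is again $A$-linear — the truncation-of-negative-$t$-powers map is visibly $A$-linear, so this is fine, but I would state it explicitly. Also I should check compatibility of the flatness hypotheses with the exterior-power identifications, i.e. that under the hypotheses all of $\ulPOmega^q_{(A,f)}$, $\ulPOmega^q_{(A[t],f)}$, etc. are flat — this is the "In particular" clause of Lemma~\ref{lem:extPOmega}, so it is available. The main obstacle, and the only place real work is needed, is the hands-on verification of split exactness for $q=1$: organizing the generators of $\ulPOmega^1$ over $A[t,\tfrac1t]$ (elements $a\,\dlog b$ with $b$ a unit after inverting $f$, and $a\,dc$) into the "$U_0$-part," "$U_\infty$-part," and "overlap $=\dlog t$-part," and confirming that the potential ambiguity — namely whether $\dlog(g)$ for $g \in A[t,\tfrac1t]$ a unit-mod-$f$ but not of the form $(\text{unit in } A)\cdot t^n$ — does not produce classes outside the image; here one uses that $A$ is integral and that such $g$, viewed in $\Frac(A)[t,\tfrac1t]$, factors as (unit)$\cdot t^n$ only in trivial cases, so that $\dlog(g) = \dlog(g/t^n) + n\,\dlog(t)$ with $g/t^n \in A[t,\tfrac1t] \cap (\text{units mod } f)$ still, and one reduces by induction on $t$-degree exactly as in the proof of Proposition~\ref{prop:POmegaLoc}\eqref{prop:POmegaLoc:PID}. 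I expect this bookkeeping, rather than any conceptual difficulty, to be the crux.
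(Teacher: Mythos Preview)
There is a genuine gap in your treatment of $q>1$. You correctly note that Lemma~\ref{lem:extPOmega} gives $\ulPOmega^q_{(A[t],f)} \cong \bigwedge^q_{A[t]} \ulPOmega^1_{(A[t],f)}$, with the exterior power taken over $A[t]$ (and analogously over $A[\tfrac1t]$, $A[t,\tfrac1t]$). But you then pass to an $A$-module decomposition of the split $q=1$ sequence and apply $\bigwedge^q_A$. These are different functors: already with $A=k$ and $f=1$ one has $\ulPOmega^1_{(A[t],f)} = k[t]\,dt$, so $\bigwedge^2_k(k[t]\,dt)$ is infinite-dimensional while $\ulPOmega^2_{(A[t],f)} = \Omega^2_{k[t]} = 0$. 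Hence your displayed sequence of $\bigwedge^q_A$'s is not Eq.~\eqref{eq:POmegaexact}, and the final appeal to Lemma~\ref{lem:extPOmega} does not apply. (Separately, the decomposition $M\cong P\oplus P'$, $N\cong P\oplus P''$, $Q\cong P\oplus P'\oplus P''$ does not follow from split exactness of $0\to P\to M\oplus N\to Q\to 0$ alone; one would need individual splittings of $P\hookrightarrow M$ and $P\hookrightarrow N$.)

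The paper's argument supplies precisely the missing structure. For $q=1$ it computes the unit groups $A[t]\cap A[\tfrac1f][t]^*$, $A[\tfrac1t]\cap A[\tfrac{t}{f}][\tfrac1t]^*$, $A[t,\tfrac1t]\cap A[\tfrac1f][t,\tfrac1t]^*$ directly from integrality of $A$, obtaining explicit decompositions \emph{over the respective base rings}, e.g.\ $\ulPOmega^1_{(A[t],f)} \cong \ulPOmega^1_{(A,f)}[t]\oplus A[t]\,dt$ as $A[t]$-modules, with the complementary summand free of rank one. Because that summand has rank one, applying $\bigwedge^q$ over each base ring and using $\bigwedge^q_{A[t]}(M\otimes_A A[t])\cong(\bigwedge^q_A M)[t]$ yields $\ulPOmega^q_{(A[t],f)} \cong \ulPOmega^q_{(A,f)}[t]\oplus \ulPOmega^{q-1}_{(A,f)}[t]\,dt$ (and similarly for the other two terms). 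With these identifications Eq.~\eqref{eq:POmegaexact} becomes the direct sum of $\ulPOmega^q_{(A,f)}\otimes_A(-)$ applied to the $q=0$ sequence and $\ulPOmega^{q-1}_{(A,f)}\otimes_A(-)$ applied to the obvious split sequence $0\to 0\to A[t]\,dt\oplus A[\tfrac1t]\,\dlog\tfrac1t\to A[t,\tfrac1t]\,dt\to 0$. The explicit base-ring decomposition with a rank-one complement is the idea your plan is missing.
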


\begin{proof}
The case $q = 0$ is the obviously split short exact sequence
\begin{equation} \label{equa:splitAAtt}
0 \to A \to A[t] \oplus A[\tfrac{1}{t}] \to A[t, \tfrac{1}{t}] \to 0.
\end{equation}
For the case $q = 1$, we try to obtain nice decompositions of $\ulPOmega^q_{(A[t],f)} $, $\ulPOmega^q_{(A[t, \frac{1}{t}],f)}$ and $\ulPOmega^q_{(A[\frac{1}{t}],f/t)}$ respectively (see Eq.\eqref{eq:decomPOmega1}, Eq.\eqref{eq:decomPOmega2} and Eq.\eqref{eq:decomPOmega3} below).
Notice that all of these are simultaneously contained in $\Omega^1_{A[\frac{1}{f}, t, \frac{1}{t}]}$, which admits a canonical direct sum decomposition
\begin{align} \label{equa:decompAft}
\Omega^1_{A[\frac{1}{f}, t, \frac{1}{t}]} 
&\cong 
\Omega^1_{A[\frac{1}{f}]}[t, \tfrac{1}{t}]  \ \oplus\ A[\tfrac{1}{f}, t, \tfrac{1}{t}]dt
\\&\cong 
\left ( \bigoplus_{i \in \ZZ} \Omega^1_{A[\frac{1}{f}]}\  \cdot\ t^i \right ) \oplus \left ( \bigoplus_{i \in \ZZ} A[\tfrac{1}{f}]\  \cdot\ t^i dt \right ) ,
\end{align}
where we write $M[t, \tfrac{1}{t}] := M \otimes_R R[t, \tfrac{1}{t}]$, for an $R$-module $M$. To obtain the decomposition, for example, we apply \cite[04B2]{stacks-project} to the two morphisms $k \to A[\tfrac{1}{f}] \to A[\tfrac{1}{f}, t, \tfrac{1}{t}]$ together with the well-known isomorphism $\Omega_{A[\frac{1}{f}, t, \frac{1}{t}]/A[\frac{1}{f}]}^1 \cong A[\tfrac{1}{f}, t, \tfrac{1}{t}]dt$.

The left summand is included via $\Omega^1_{R} \otimes_R S \to \Omega^1_S$ and projection to the right summand is the canonical $\Omega^1_{S} \to \Omega^1_{S/R}$. In particular, the decomposition is functorial enough to induce subdecompositions (noting $d\left ( \tfrac{1}{t} \right )=-\tfrac{dt}{t^2}$)
\begin{align*}
\Omega^1_{A[t]} 
&\cong 
\Omega^1_{A}[t]  \ \oplus\ A[t]dt, \\
\Omega^1_{A[\frac{1}{t}]} 
&\cong 
\Omega^1_{A}[\tfrac{1}{t}]  \ \oplus\ A[\tfrac{1}{t}]\tfrac{dt}{t^2}, \\
\Omega^1_{A[t,\frac{1}{t}]} 
&\cong 
\Omega^1_{A}[t, \tfrac{1}{t}]  \ \oplus\ A[t, \tfrac{1}{t}]dt.
\end{align*}
By applying $\otimes_A A[\tfrac{1}{f}]$ to the above decompositions, we also obtain 
\begin{align*}
\Omega^1_{A[\frac{1}{f}][t]} 
&\cong 
\Omega^1_{A[\frac{1}{f}]}[t]  \ \oplus\ A[\tfrac{1}{f}][t]dt, \\
\Omega^1_{A[\frac{1}{f}][\frac{1}{t}]} 
&\cong 
\Omega^1_{A[\frac{1}{f}]}[\tfrac{1}{t}]  \ \oplus\ A[\tfrac{1}{f}][\tfrac{1}{t}]\tfrac{dt}{t^2}, \\
\Omega^1_{A[\frac{1}{f}][t,\frac{1}{t}]} 
&\cong 
\Omega^1_{A[\frac{1}{f}]}[t, \tfrac{1}{t}]  \ \oplus\ A[\tfrac{1}{f}][t, \tfrac{1}{t}]dt.
\end{align*}

Now we try to decompose $\ulPOmega^q_{(A[t],f)}$.
Recall that for any modulus ring $(B,b)$, by definition, $\ulPOmega^1_{(B, b)}$ is the sub-$B$-module of $\Omega^1_{B[\frac{1}{b}]}$ generated by $\Omega^1_B$ and those $\dlog b$ such that $b \in B \cap B[\tfrac{1}{b}]^*$. 
Since $A$ and therefore $A[\tfrac{1}{f}]$ is reduced, we have \[A[t] \cap A[\tfrac{1}{f}][t]^* = A \cap A[\tfrac{1}{f}]^*\]
thanks to the fact that for any commutative ring $R$, a polynomial $a_0+a_1x+\dots +a_nx^n \in R[x]$ is invertible if and only if $a_0 \in R^*$ and $a_i$ are nilpotent for all $i>0$.
Thus, one sees that the $A[t]$-module generated by $\Omega^1_{A[t]}$ and $\dlog \left ( A[t] \cap A[\tfrac{1}{f}][t]^* \right) = \dlog \left ( A \cap A[\tfrac{1}{f}]^* \right) $ is equal to $\ulPOmega^1_{(A,f)}[t]  \ \oplus\ A[t]dt$.
Therefore, we obtain a decomposition of $A[t]$-modules
\begin{equation}\label{eq:decomPOmega1}
\ulPOmega^1_{(A[t], f)} 
\cong 
\ulPOmega^1_{(A,f)}[t]  \ \oplus\ A[t]dt.
\end{equation}

Next we decompose $\ulPOmega^1_{(A[t, \frac{1}{t}],f)}$.
Since $A$ is integral by assumption, we have \[A[t, \tfrac{1}{t}] \cap A[\tfrac{1}{f}][t, \tfrac{1}{t}]^* = \bigsqcup_{i \in \ZZ} (A \cap A[\tfrac{1}{f}]^*) \cdot  t^{i}.\] 
Therefore, the image of the abelian monoid $A[t, \tfrac{1}{t}] \cap A[\tfrac{1}{f}][t, \tfrac{1}{t}]^*$ under $\dlog$ is generated by the image of $A \cap A[\tfrac{1}{f}]^*$ and the $\dlog t^i = i t^{i-1} dt$ for $i \in \ZZ$. 
Hence, we obtain a decomposition of $A[t,\tfrac{1}{t}]$-modules
\begin{equation}\label{eq:decomPOmega2}
\ulPOmega^1_{(A[t, \frac{1}{t}], f)} 
\cong 
\ulPOmega^1_{(A,f)}[t, \tfrac{1}{t}]  \ \oplus\ A[t, \tfrac{1}{t}]dt.
\end{equation}

Finally we decompose $\ulPOmega^q_{(A[\frac{1}{t}],f/t)}$.
Note that we have $A[\tfrac{t}{f}][\tfrac{1}{t}] = A[\tfrac{1}{f}][t, \tfrac{1}{t}]$ and moreover, since $A$ is integral, we have $A[\tfrac{1}{f}][t, \tfrac{1}{t}]^* = \{ a t^i : i \in \ZZ, a \in A[\tfrac{1}{f}]^*\}$.
These imply
\[
A[\tfrac{1}{t}] \cap A[\tfrac{t}{f}][\tfrac{1}{t}]^* = \bigsqcup_{i \in \Z, i \leq 0} \left ( A \cap A[\tfrac{1}{f}]^* \right ) \cdot t^i.
\]
 In particular, the image of  $A[\tfrac{1}{t}] \cap A[\tfrac{t}{f}][\tfrac{1}{t}]^*$ under $\dlog$ is generated as an abelian monoid by the image of $A \cap A[\tfrac{1}{f}]^*$ and $\dlog (\tfrac{1}{t})^i = i \dlog (\tfrac{1}{t})$ for $i \geq 0$. Hence we have a decomposition of $A[\tfrac{1}{t}]$-modules
\begin{equation}\label{eq:decomPOmega3}
\ulPOmega^1_{(A[\frac{1}{t}], \frac{f}{t})} 
\cong 
\ulPOmega^1_{(A, f)}[\tfrac{1}{t}]  \ \oplus\ A[\tfrac{1}{t}]\dlog \tfrac{1}{t}.
\end{equation}

By Eq.\eqref{eq:decomPOmega1}, Eq.\eqref{eq:decomPOmega2} and Eq.\eqref{eq:decomPOmega3}, we see that for $q = 1$, the sequence \eqref{eq:POmegaexact} in the statement is the direct sum of $\ulPOmega^1_{(A,f)} \otimes_A -$ applied to the split short exact sequence of free $A$-modules Eq.\eqref{equa:splitAAtt} and the (obviously split) short exact sequence
\begin{equation}\label{eq:obvious}
0 \to 0 \to A[t]dt \oplus A[\tfrac{1}{t}]\dlog \tfrac{1}{t} \stackrel{\sim}{\to} A[t, \tfrac{1}{t}] dt \to 0,
\end{equation}
and therefore is itself exact.

To prove the case $q \geq 1$, note that for any commutative ring $R$ and for any $R$-modules $M,N$, there exists a canonical decomposition of $R$-modules
\begin{equation} \label{equa:wedgeSum}
\bigwedge\nolimits_R^q (M \oplus N) \cong \bigoplus_{i = 0}^q \left ( \bigwedge\nolimits_R^i M \otimes_R \bigwedge\nolimits_R^{q-i} N \right ).
\end{equation}
for any $q \geq 0$. One can deduce this decomposition from $\oplus_{n \in \NN} \wedge^n$ being left adjoint to the ``degree one'' functor $A_\bullet \mapsto A_1$ from dg-$A$-algebras to $A$-modules (it commutes with sums because it's a left adjoint).
If $N$ is a free $R$-module of rank $1$ and $q \geq 1$, then Eq.\eqref{equa:wedgeSum} is simplified as
\begin{equation} \label{equa:wedgeSum-rk1}
\bigwedge\nolimits_R^q (M \oplus N) \cong  \bigwedge\nolimits_R^q M \oplus \left ( \bigwedge\nolimits_R^{q-1} M \right ) \otimes_R N.
\end{equation}
Applying Eq.\eqref{equa:wedgeSum-rk1} to the decompositions Eq.\eqref{eq:decomPOmega1}, Eq.\eqref{eq:decomPOmega2} and Eq.\eqref{eq:decomPOmega3} with $R=A[t],A[1,\tfrac{1}{t}], A[\tfrac{1}{t}]$ respectively, and by using Lem.~\ref{lem:extPOmega},
we obtain
\begin{align}
\ulPOmega^q_{(A[t], f)} 
&\cong 
\ulPOmega^q_{(A,f)}[t]  \ \oplus\ \ulPOmega^{q-1}_{(A,f)}[t]dt, \label{eq:decomPOmega4} \\
\ulPOmega^q_{(A[t, \frac{1}{t}], f)} 
&\cong 
\ulPOmega^q_{(A,f)}[t, \tfrac{1}{t}]  \ \oplus\ \ulPOmega^{q-1}_{(A,f)}[t, \tfrac{1}{t}]dt, \label{eq:decomPOmega5} \\
\ulPOmega^q_{(A[\frac{1}{t}], \frac{f}{t})} 
&\cong 
\ulPOmega^q_{(A, f)}[\tfrac{1}{t}]  \ \oplus\ \ulPOmega^{q-1}_{(A, f)}[\tfrac{1}{t}]\dlog \tfrac{1}{t}, \label{eq:decomPOmega6}
\end{align}
for any $q \geq 1$, by noting that $\bigwedge_{A[t]}^q(M \otimes_A A[t]) \cong \left (\bigwedge^q_AM\right)\otimes_A A[t]$.
Then the sequence Eq.\eqref{eq:POmegaexact} in the statement is the direct sum of $\ulPOmega^q_{(A,f)} \otimes_A -$ applied to the split short exact sequence of free $A$-modules Eq.\eqref{equa:splitAAtt} and the split short exact sequence
\begin{equation}\label{eq:decomPOmega7}
0 \to 0 \to \ulPOmega^{q-1}_{(A, f)}[t]dt \oplus \ulPOmega^{q-1}_{(A, f)}[\tfrac{1}{t}]\dlog \tfrac{1}{t} \stackrel{\sim}{\to} \ulPOmega^{q-1}_{(A, f)}[t, \tfrac{1}{t}] dt \to 0, 
\end{equation}
which is obtained by applying $\otimes_A \ulPOmega^{q-1}_{(A, f)}$ to Eq.\eqref{eq:obvious}, and therefore is itself also exact.
\end{proof}

\begin{cor}\label{cor:ci-MOmega}
Suppose that $(A,f)$ is a modulus $k$-ring with $A$ integral such that 
$\ulPOmega^1_{(A,f)}$ (resp. $\ulPOmega^1_{(A[t],f)}$, $\ulPOmega^1_{(A[\frac{1}{t}],f/t)}$, $\ulPOmega^1_{(A[t, \frac{1}{t}],f)}$) is flat over $A$ (resp. $A[t]$, $A[\tfrac{1}{t}]$, $A[t,\tfrac{1}{t}]$).
Then the canonical sequence
\begin{equation}\label{eq:MOmegaexact}
0 
\to \ulMOmega^q_{(A,f)} 
\to \ulMOmega^q_{(A[t],f)} 
\oplus \ulMOmega^q_{(A[\frac{1}{t}],f/t)} 
\to \ulMOmega^q_{(A[t, \frac{1}{t}],f)} 
\to 
0.
\end{equation}
is a split exact sequence of $A$-modules.

Consequently, for any simple normal crossing modulus pair $\sX$ (i.e., $\hX$ is regular locally Noetherian and $\mX$ is a simple normal crossing divisor), and  for $\tau \in \{\Zar,\et\}$ and $q \geq 0$, $i \in \Z$, we have
\[ H^i_{\tau}(\sX, \ulMOmega^q) \cong H^i_{\tau}(\sX {\boxtimes} \bcube, \ulMOmega^q). \]
\end{cor}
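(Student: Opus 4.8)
The plan is to deduce \eqref{eq:MOmegaexact} from its counterpart \eqref{eq:POmegaexact} for $\ulPOmega^q$ by applying $\ulMO_{(A,f)}\otimes_A(-)$, and then to obtain cube invariance by globalising \eqref{eq:MOmegaexact} over $\hX$ and computing a derived direct image along $\pi\colon\hX\times\PP^1\to\hX$; the argument runs parallel to the one for $\ulMO$ in \cite{KelMi1}. For the short exact sequence, the only input needed beyond Proposition~\ref{prop:CI-POmega} is the collection of base change identities
\[
\ulMO_{(A[t],f)}=\ulMO_{(A,f)}\otimes_A A[t],\qquad
\ulMO_{(A[\frac1t],f/t)}=\ulMO_{(A,f)}\otimes_A A[\tfrac1t],\qquad
\ulMO_{(A[t,\frac1t],f)}=\ulMO_{(A,f)}\otimes_A A[t,\tfrac1t],
\]
viewed as equalities of submodules of the relevant localisation of $A[\frac1f]$; these hold because $I\mapsto\sqrt{I}$ commutes with the flat base changes $A\to A[t]$, $A[\frac1t]$, $A[t,\frac1t]$ (a polynomial or Laurent extension of a reduced ring is reduced, and $\frac1t$ is a prime element of the polynomial ring $A[\frac1t]$), cf.\ \cite{KelMi1}. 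The flatness hypotheses are exactly what is required to apply Lemma~\ref{lem:MOmega-otimes} to each of the four modulus rings, so every term of \eqref{eq:MOmegaexact} equals $\ulMO_{(B,g)}\otimes_B\ulPOmega^q_{(B,g)}=\ulMO_{(A,f)}\otimes_A\ulPOmega^q_{(B,g)}$ for the relevant pair $(B,g)$; hence \eqref{eq:MOmegaexact} is just $\ulMO_{(A,f)}\otimes_A(-)$ applied to the split short exact sequence \eqref{eq:POmegaexact}, and is therefore itself split exact (no flatness of $\ulMO_{(A,f)}$ over $A$ is used).

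For the consequence, I would first observe that the flatness hypotheses hold automatically when $\sX$ is simple normal crossings: locally $\ulPOmega^1_{(A,f)}$ is the locally free module $\Omega^1(\log|\mX|)$ (Example~\ref{exam:sncOmegaFree}), and if $(A,f)$ is a strict normal crossings chart then so are $(A[t],f)$, $(A[\frac1t],f/t)$ and $(A[t,\frac1t],f)$ — adjoining, and possibly inverting, one new coordinate keeps the modulus a product of distinct members of a regular system of parameters — so the remaining three $\ulPOmega^1$'s are locally free as well. Thus \eqref{eq:MOmegaexact} holds on every affine chart of $\hX$ and, its maps being the evident functorial restriction maps, glues to a short exact sequence of quasi-coherent $\OO_{\hX}$-modules
\begin{equation}\label{eq:sketchSES}
0\to\ulMOmega^q_{\sX}\to(p_0)_*\big(\ulMOmega^q_{\sX\boxtimes\bcube}|_{U_0}\big)\oplus(p_\infty)_*\big(\ulMOmega^q_{\sX\boxtimes\bcube}|_{U_\infty}\big)\to(p_{01})_*\big(\ulMOmega^q_{\sX\boxtimes\bcube}|_{U_{01}}\big)\to 0,
\end{equation}
where $U_0:=\hX\times(\PP^1\setminus\{\infty\})$ and $U_\infty:=\hX\times(\PP^1\setminus\{0\})$ are the two standard affine charts of $\hX\times\PP^1$, $U_{01}:=U_0\cap U_\infty$, and $p_0,p_\infty,p_{01}$ denote the corresponding (affine) projections to $\hX$; on an affine $\Spec(A)\subseteq\hX$ this is exactly \eqref{eq:MOmegaexact} for $(A,f)$.

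Finally I would interpret \eqref{eq:sketchSES} as a computation of $R\pi_*\ulMOmega^q_{\sX\boxtimes\bcube}$. Each of $U_0,U_\infty,U_{01}$ is the complement in $\hX\times\PP^1$ of an effective Cartier divisor, so meets every affine open of $\hX\times\PP^1$ in an affine — hence cohomologically trivial — scheme; together with the fact that $p_0,p_\infty,p_{01}$ are affine, this shows that the augmented \v{C}ech complex of $\ulMOmega^q_{\sX\boxtimes\bcube}$ for the cover $\{U_0,U_\infty\}$ is a resolution by $\pi_*$-acyclic sheaves. Applying $R\pi_*$ then identifies $R\pi_*\ulMOmega^q_{\sX\boxtimes\bcube}$ with the two-term complex $\big[(p_0)_*(\ulMOmega^q_{\sX\boxtimes\bcube}|_{U_0})\oplus(p_\infty)_*(\ulMOmega^q_{\sX\boxtimes\bcube}|_{U_\infty})\to(p_{01})_*(\ulMOmega^q_{\sX\boxtimes\bcube}|_{U_{01}})\big]$, which by \eqref{eq:sketchSES} is a resolution of $\ulMOmega^q_{\sX}$. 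Hence $R\pi_*\ulMOmega^q_{\sX\boxtimes\bcube}\simeq\ulMOmega^q_{\sX}$, so that
\[
H^i_\tau(\sX\boxtimes\bcube,\ulMOmega^q)=H^i_\tau(\hX\times\PP^1,\ulMOmega^q_{\sX\boxtimes\bcube})=H^i_\tau(\hX,R\pi_*\ulMOmega^q_{\sX\boxtimes\bcube})=H^i_\tau(\hX,\ulMOmega^q_{\sX})=H^i_\tau(\sX,\ulMOmega^q)
\]
for $\tau=\Zar$; for $\tau=\et$ one adds that $\ulMOmega^q$ is quasi-coherent as an \'etale sheaf (Proposition~\ref{prop:OmegaEtaleQC}), so that its Zariski and \'etale cohomologies agree. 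The points demanding care are the base change identities for $\ulMO$ and the verification that the strict normal crossings property — and with it all the flatness used — is preserved under the three ring extensions; once these are settled, the derived direct image bookkeeping is routine.
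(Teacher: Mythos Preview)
Your proposal is correct and matches the paper's approach. For the split exactness of \eqref{eq:MOmegaexact} you do exactly what the paper does: invoke Lemma~\ref{lem:MOmega-otimes} at each of the four rings, use the base-change identities for $\ulMO$ (the paper packages the only nontrivial one, $\ulMO_{(A[1/t],\,f/t)}=\ulMO_{(A,f)}[1/t]$, as the citation $\ulMO(A[t],f)=\ulMO(A[t],ft)$ from \cite[Lemma~5.3]{KelMi1}), and then tensor the split sequence \eqref{eq:POmegaexact} with $\ulMO_{(A,f)}$. For the cohomological consequence, the paper simply invokes \cite[Lemma~5.1]{KelMi1} together with quasi-coherence; your \v{C}ech/$R\pi_*$ argument is precisely an unpacking of that lemma, so the two proofs coincide in substance.
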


\begin{proof}
By the isomorphisms 
\[
\ulMO_{(A,f)} \otimes_A \ulPOmega^q_{(A,f)} \cong \ulMOmega^q_{(A,f)}, \quad \ulMO(A[t], f) = \ulMO(A[t], ft)
\]
of Lem.~\ref{lem:MOmega-otimes} and \cite[Lemma 5.3]{KelMi1} respectively, the split exactness of Eq.\eqref{eq:MOmegaexact} follows by applying $\ulMO_{(A,f)} \otimes_A -$ to the decompositions Eq.\eqref{eq:decomPOmega4}, Eq.\eqref{eq:decomPOmega5}, Eq.\eqref{eq:decomPOmega6} and the split exact sequence Eq.\eqref{eq:decomPOmega7}. 
The cube invariance of cohomology follows from \cite[Lemma 5.1]{KelMi1} and the quasi-coherence of $\ulMOmega^q_{\sX}$ from Cor.~\ref{cor:qcoh-MOmega}.
\end{proof}

\section{Blow-up invariance of the modulus Hodge cohomology} \label{sec:biOmega}

In this subsection, we continue to assume that the base ring $k$ is a field. 
We prove the following result.

\begin{thm}[Blow-up invariance of cohomology of $\ulMOmega$] \label{prop:bi-MOmega}
Let  $\sX \in \Sm_k$ be a modulus pair and $Z \subseteq \hX$ a closed subscheme that has normal crossings with $\mX$ in the sense of Def.\ref{defi:NC}. 
Let $f: \hY \to \hX$ be the blowup with centre $Z$, and $\mY = f^* \mX$. Then the canonical morphism in the derived category of quasi-coherent $\OO_{\hX}$-modules
\[ \ulMOmega^q_{\sX} \to Rf_*\ulMOmega^q_{\sY}\]
is an isomorphism for any $q \geq 0$.
\end{thm}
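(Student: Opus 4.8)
The plan is to reduce to an explicit local computation, following the pattern of the blow-up invariance proof for $\ulMO$ in \cite{KelMi1}. First I would reduce to the case where $\hX = \Spec A$ is affine, and moreover where $A$ is a regular local ring with a regular system of parameters $t_1, \dots, t_n$ such that $\mX = (t_1^{r_1} \cdots t_i^{r_i})$ and $Z = V(t_1, \dots, t_c)$ for some $c \le n$; this is allowed because both the statement and the sheaves $\ulMOmega^q$ are Zariski-local (Corollary~\ref{cor:qcoh-MOmega}) and \'etale-local (Proposition~\ref{prop:OmegaEtaleQC}) on $\hX$, so by Definition~\ref{defi:NC} we may pass to such a local model, and $Rf_*$ commutes with flat base change on $\hX$.

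**Main step: using the factorisation $\ulMOmega^q = \ulMO \otimes \ulPOmega^q$.** By Lemma~\ref{lem:MOmega-otimes} we have $\ulMOmega^q_{\sX} = \ulMO_{\sX} \otimes_{\OO_{\hX}} \ulPOmega^q_{\sX}$ and likewise on $\hY$, with $\ulPOmega^q$ locally free (Example~\ref{exam:sncOmegaFree}). I would try to handle $\ulPOmega^q$ and $\ulMO$ somewhat separately. For the logarithmic Hodge sheaf $\ulPOmega^q = \Omega^q_{\hX}(\log \mX)$, the blow-up invariance $\Omega^q_{\hX}(\log \mX) \xrightarrow{\sim} Rf_* \Omega^q_{\hY}(\log \mY)$ is essentially classical (the centre $Z$ has normal crossings with $\mX$, so $f$ is a composition of blow-ups in smooth centres meeting $|\mX|$ transversally, and one can invoke the known behaviour of log differentials under such blow-ups, cf. the arguments behind \cite[Cor.~6.8]{RS18}, or compute directly in coordinates). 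The key new input is the interaction with the ideal sheaf $\sqrt{I} \otimes I^{-1}$: on $\hY$, the divisor $\mY = f^*\mX$ and its radical $|\mY| = f^{-1}|\mX|$ are again normal crossings, and one needs to control $Rf_*(\ulMO_{\sY} \otimes \ulPOmega^q_{\sY})$. The cleanest route is probably to filter $\ulMOmega^q_{(\hX, \mX)}$ using the short exact sequences \eqref{equa:MOmegaSeS1}, \eqref{equa:MOmegaSeS2} (and their counterparts on $\hY$), which express the graded pieces in terms of $\ulPOmega^q$ and $\ulPOmega^{q-1}$ restricted to the components $D_0$, thereby reducing the general modulus to the reduced case $\mX = |\mX|$ by d\'evissage and induction on the multiplicities.

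**The reduced case and the core computation.** In the reduced case $\ulMOmega^q_{\sX} = \Omega^q_{\hX}(\log|\mX|)$, so this is genuinely the logarithmic blow-up invariance statement above. The remaining content is then the explicit check that pushing forward along the blow-up of a smooth centre $Z$ having normal crossings with the reduced divisor $|\mX|$ preserves both $\OO$ and $\Omega^q(\log)$, together with the higher direct image vanishing $R^j f_* = 0$ for $j > 0$; here $R^j f_* \OO_{\hY} = 0$ for $j > 0$ since blow-ups in smooth centres of a regular scheme have no higher cohomology, and $R^j f_*$ of a locally free twist is handled by the projection formula and the structure of $f$ as an iterated projective bundle over the strata. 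Reassembling, the isomorphism on $\hX$ follows.

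**Expected main obstacle.** The delicate point is the bookkeeping when the centre $Z$ meets several components of $|\mX|$ and $\mX$ is non-reduced: one must check that $f^*\mX$ together with the exceptional divisor still forms a normal crossings configuration and that the filtration steps \eqref{equa:MOmegaSeS1}--\eqref{equa:MOmegaSeS2} are compatible with $f$ (i.e. that $f$ restricted to each component $D_0$ is again a blow-up of the required type), so that the d\'evissage closes up. Making the coordinate computation of $Rf_*\bigl(\Omega^q_{\hY}(\log\mY)(\mY - |\mY|)\bigr)$ uniform across all the strata — rather than component-by-component — is where the real work lies; the $\ulMO$-factor is the genuinely non-classical ingredient, and controlling $Rf_*$ of it twisted by log forms is the crux.
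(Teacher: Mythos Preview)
Your reduction to an \'etale-local model is in line with the paper: both arguments pass to a standard coordinate chart and invoke flat base change so that it suffices to treat the blow-up of affine space along a coordinate linear subspace with monomial modulus (the paper's Proposition~\ref{prop:standardBLowupMOmega}).

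Where the two diverge is in the core computation. The paper does \emph{not} d\'evissage on multiplicities. Instead it writes the short exact sequence
\[
0 \to \pi^*\ulMOmega^q_{\sA} \to \ulMOmega^q_{\sB} \to \sC_q \to 0,
\]
shows $R\pi_*\pi^*\ulMOmega^q_{\sA} \cong \ulMOmega^q_{\sA}$ by the projection formula, and then proves $R\pi_*\sC_q = 0$ by an explicit identification: on the standard open cover of the blow-up, $\pi^*\ulPOmega^1_\sA \to \ulPOmega^1_\sB$ becomes a direct sum of maps $\OO \hookrightarrow \OO(-1)$; taking $\wedge^q$ and tensoring with the comparison $\pi^*\ulMO_\sA \cong \ulMO_\sB(i{-}1) \hookrightarrow \ulMO_\sB$ (where $i$ is the number of divisor components containing the centre), one filters $\sC_q$ by sheaves $\iota_*\OO_E(j)$ on the exceptional $E \cong \PP^{|N|-1}$ with $-|N| < j < 0$, all of which have vanishing $R\pi_*$.

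Your d\'evissage route has a genuine gap in the step you flag as ``the reduced case''. Even if $\mX$ is reduced, the pullback $\mY = f^*\mX$ is typically \emph{not} reduced: if the centre $Z$ lies in $j \ge 2$ components of $|\mX|$ then the exceptional divisor $E$ appears in $\mY$ with multiplicity $j$, so $\ulMOmega^q_{\sY} = \Omega^q_{\hY}(\log|\mY|)\bigl((j{-}1)E\bigr)$, not $\Omega^q_{\hY}(\log|\mY|)$. Hence the base case of your induction is not classical log blow-up invariance at all; the twist by $\ulMO_\sY$ is still present on $\hY$ and must be dealt with. Running the filtration \eqref{equa:MOmegaSeS1}--\eqref{equa:MOmegaSeS2} in parallel on $\hX$ and $\hY$ does not close up either, because subtracting a prime component $D_0$ on $\hX$ corresponds on $\hY$ to subtracting $f^*D_0 = \tilde{D}_0 + E$, which is not a single prime component, so the sequences \eqref{equa:MOmegaSeS1}--\eqref{equa:MOmegaSeS2} do not directly describe the graded pieces upstairs. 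Untangling this forces you to compute $R\pi_*$ of twists of log forms by multiples of $E$ --- which is exactly the explicit line-bundle calculation the paper carries out directly, without the d\'evissage detour.
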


\subsection{The case of blowing-up of affine spaces}
First we prove Theorem \ref{prop:bi-MOmega} for the following special case.

\begin{prop} \label{prop:standardBLowupMOmega}
For a partition $I = \{0, \dots, n\} = \indexA \sqcup \indexB$, let
\[ \pi: \hB = Bl_{\AA^I}\AA^{\indexB} \to \AA^I = \hA \]
be the blowup of $\AA^I$ in $\AA^{\indexB} \subseteq \AA^I$ where we write $\AA^J := \Spec(k[x_j : j \in J])$ for a subset $J \subseteq I$. Given a function $r: I \to \NN_{\geq 0}$ equip $\hA$ and $\hB$ with the effective Cartier divisors 
\[ \mA = x_0^{r_0} \dots x_n^{r_n}; \qquad \qquad \mB = \pi^*\mA. \]
If the centre $\AA^{\indexB}$ of the blowup is contained in the divisor $\mA$, then the canonical comparison 
\[ \ulMOmega^q_\sA \to R\pi_*\ulMOmega^q_{\sB} \]
of complexes of quasi-coherent $\OO_{\hA}$-modules is a quasi-ismorphism, 
where $\sB = (\hB, \mB)$ and $\sA = (\hA, \mA)$.
\end{prop}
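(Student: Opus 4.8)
The plan is to reduce the statement, via the affine $\hA$, to a cohomology computation on a projective space, using the toric nature of the blow‑up.

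\textbf{Reduction to global sections, and to $\indexB=\varnothing$.} Since $\hA$ is affine and all sheaves in sight are quasi‑coherent (Corollary~\ref{cor:qcoh-MOmega}), it suffices to show that the augmentation $\ulMOmega^q_\sA(\hA)\to R\Gamma(\hB,\ulMOmega^q_\sB)$ is a quasi‑isomorphism of complexes of $k[x_j:j\in I]$‑modules, i.e.\ that $\ulMOmega^q_\sA\xrightarrow{\sim}\pi_*\ulMOmega^q_\sB$ and $R^{>0}\pi_*\ulMOmega^q_\sB=0$. Writing $\sA=\sA_1\otimes\sA_2$ with $\sA_1=(\AA^{\indexA},\prod_{a\in\indexA}x_a^{r_a})$ and $\sA_2=(\AA^{\indexB},\prod_{b\in\indexB}x_b^{r_b})$, and $\sB=\sB_1\otimes\sA_2$ with $\sB_1=(Bl_{\AA^{\indexA}}\{0\},\pi_1^*\mA_1)$, the K\"unneth decomposition $\ulMOmega^q_{\sX\otimes\sY}\cong\bigoplus_{i+j=q}\ulMOmega^i_\sX\boxtimes\ulMOmega^j_\sY$ (a consequence of Lemma~\ref{lem:extPOmega}, Eq.~\eqref{equa:wedgeSum} and compatibility of $\ulMO$ with tensor products of modulus pairs) together with flat base change along the factor $\AA^{\indexB}$ reduces everything to the case $\indexB=\varnothing$. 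From now on $\pi\colon\hB=Bl_{\AA^I}\{0\}\to\AA^I=\hA$, with $\ell:=|I|\ge2$ (the cases $\ell\le1$ being trivial, $\pi$ then being an isomorphism) and $p:=\#\{i\in I:r_i>0\}\ge1$, the latter being exactly the hypothesis that the centre lies in $\mA$.

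\textbf{The line bundle structure.} Identify $\hB$ with the total space of $\OO_\PP(-1)$, where $\PP:=\PP(\AA^I)\cong\PP^{\ell-1}$, with exceptional divisor $E$ the zero section, and let $g\colon\hB\to\PP$ be the (affine) bundle projection, so $Rg_*=g_*$ and $g_*\OO_\hB=\bigoplus_{d\ge0}\OO_\PP(d)$. Put $\Sigma:=\sum_{i:r_i>0}H_i$ for the union of the $p$ coordinate hyperplanes $H_i=\{x_i=0\}$ in $\PP$. Using $\mB=\pi^*\mA$, $\OO_\hB(E)=g^*\OO_\PP(-1)$, and $\OO_\hB(\pi^*D_i)=\OO_\hB$ (since $\pi^*D_i$ is the divisor of the function $x_i$), one computes
\[ |\mB|=E+\textstyle\sum_{i:r_i>0}g^{-1}(H_i),\qquad \OO_\hB(\mB-|\mB|)=g^*\OO_\PP(1-p). \]
Because $E$ is the zero section and lies in $|\mB|$, the relative logarithmic cotangent sheaf $\Omega^1_{\hB/\PP}(\log E)$ is canonically trivial (generated by $\tfrac{dt}{t}$, $t$ a fibre coordinate), so
\[ 0\to g^*\Omega^1_\PP(\log\Sigma)\to\Omega^1_\hB(\log|\mB|)\to\OO_\hB\to0. \]
Taking $q$‑th exterior powers and twisting by $g^*\OO_\PP(1-p)$, and using $\ulMOmega^q_\sB=\Omega^q_\hB(\log|\mB|)(\mB-|\mB|)$ (Example~\ref{exam:sncOmegaFree}), we get a short exact sequence
\[ 0\to g^*\!\bigl(\Omega^q_\PP(\log\Sigma)(1-p)\bigr)\to\ulMOmega^q_\sB\to g^*\!\bigl(\Omega^{q-1}_\PP(\log\Sigma)(1-p)\bigr)\to0. \]

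\textbf{Pushing down and logarithmic Bott vanishing.} Applying $g_*$ and the projection formula, $R\Gamma(\hB,\ulMOmega^q_\sB)$ is assembled from the groups $R\Gamma\bigl(\PP^{\ell-1},\Omega^i_\PP(\log\Sigma)(e)\bigr)$ for $i\in\{q-1,q\}$ and $e\ge1-p$. The plan is to show these vanish in positive degrees: one argues by d\'evissage on the number of components of $\Sigma$, via the residue sequences $0\to\Omega^i_\PP(\log\Sigma)\to\Omega^i_\PP(\log(\Sigma+H))\to\Omega^{i-1}_H(\log(\Sigma\cap H))\to0$, and induction on $\ell$. The base case is the full toric boundary $p=\ell$, where $\Omega^i_\PP(\log\Sigma)$ is free of rank $\binom{\ell-1}{i}$ and one needs only $H^{>0}(\PP^{\ell-1},\OO(e))=0$, which holds since $e\ge1-p\ge1-\ell=-(\ell-1)>-\ell$; as $1-p\ge1-\ell$ persists under restriction to hyperplanes, the induction closes. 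Feeding this back through the two short exact sequences gives $R^{>0}\pi_*\ulMOmega^q_\sB=0$ and identifies $\pi_*\ulMOmega^q_\sB$ with the degree‑$0$ part, which — once one tracks the rational section $\prod_{i:r_i>0}x_i^{-(r_i-1)}$ trivialising $\OO_\hB(\mB-|\mB|)$ — one checks equals $\ulMOmega^q_\sA=\Omega^q_\hA(\log|\mA|)(\mA-|\mA|)$.

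\textbf{The main obstacle.} The delicate point is the last step, and it is delicate precisely because the centre $\AA^{\indexB}$, though contained in $\mA$, need not be a \emph{stratum} of it: when $p<\ell$ there are coordinates cutting out the centre that do not occur in $\mA$, so $\Sigma$ is only a proper sub‑union of the toric boundary of $\PP^{\ell-1}$ and $\Omega^i_\PP(\log\Sigma)$ is no longer a trivial bundle. One must run the logarithmic Bott d\'evissage carefully — in particular verifying surjectivity of the residue maps on $H^0$ — and then, at the level of $H^0$, identify the resulting monomial modules, with their embeddings into the module of $q$‑forms over the function field $k(x_I)$, with $\ulMOmega^q_\sA$. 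An alternative and more elementary (but combinatorially heavier) route, avoiding logarithmic Bott, is to compute $R\Gamma(\hB,\ulMOmega^q_\sB)$ directly from the \v Cech complex of the standard affine charts $\{U_a\}_{a\in\indexA}$ of the blow‑up — on each chart, and on each intersection of charts, the modulus is again monomial, so Example~\ref{exam:sncOmegaFree} applies — and to check exactness of the augmented \v Cech complex weight by weight for the natural action of the torus $(\AA^1\setminus\{0\})^I$; there the subtlety of the "fresh" coordinates reappears in the shape of the individual weight pieces.
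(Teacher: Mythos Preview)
Your route is genuinely different from the paper's and is valid in outline, but the two obstacles you flag at the end are precisely what the paper's approach is designed to avoid, and you have not actually overcome them.

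You factor through the line-bundle projection $g\colon\hB\to\PP^{\ell-1}$, reduce (after a K\"unneth step) to a logarithmic Bott-type vanishing $H^{>0}\bigl(\PP^{\ell-1},\Omega^i_\PP(\log\Sigma)(e)\bigr)=0$ for partial toric boundary $\Sigma$ and $e\ge1-p$, and must then reconstruct $\ulMOmega^q_\sA$ from the graded pieces $\bigoplus_{d\ge0}H^0\bigl(\PP,\Omega^i(\log\Sigma)(1-p+d)\bigr)$. The paper instead compares $\ulMOmega^q_\sB$ with the pullback $\pi^*\ulMOmega^q_\sA$ from the base: writing
\[
0\to\pi^*\ulMOmega^q_\sA\to\ulMOmega^q_\sB\to\sC_q\to0,
\]
the projection formula together with $R\pi_*\OO_\hB=\OO_\hA$ gives $R\pi_*\pi^*\ulMOmega^q_\sA\cong\ulMOmega^q_\sA$ at once, so the $H^0$ identification is built into the first line and never has to be checked separately. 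The cokernel $\sC_q$ is then computed in the explicit local bases of Example~\ref{exam:sncOmegaFree}: the map $\pi^*\ulPOmega^1_\sA\to\ulPOmega^1_\sB$ is identified with the direct sum of identities on $\OO^{\oplus(M\cup T)}$ and inclusions $\OO\hookrightarrow\OO(-1)$ on the summands indexed by $M^c\cap N$ (exactly the ``fresh'' coordinates you worry about), and combining with $\pi^*\ulMO_\sA\cong\ulMO_\sB(i-1)$ where $i=|M\cap N|\ge1$, one finds $\sC_q$ filtered by sheaves $\iota_*\OO_E(d)$ with $-|N|<d<0$. These die under $R\pi_*$ by ordinary Serre vanishing on the exceptional $\PP^{|N|-1}$; the numerics $1\le i$ and $i+k\le|N|$ (for $0\le k\le|M^c\cap N|$) are exactly what force the twist into the vanishing window, and they encode the hypothesis that the centre meets the modulus.

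What this buys over your approach: your d\'evissage toward the full toric boundary needs, at each inductive step, the connecting map $H^0\bigl(\Omega^{i-1}_H(\log(\Sigma\cap H))(e)\bigr)\to H^1\bigl(\Omega^i_\PP(\log\Sigma)(e)\bigr)$ to be surjective, and for $e=0$ this is a genuine first-Chern-class computation, not a formality. Worse, your short exact sequence on $\hB$ is not canonically split, so after taking $H^0$ you still have to identify an extension of monomial modules, together with the trivialisation by $\prod x_i^{1-r_i}$, with the free module $\ulMOmega^q_\sA$; ``one checks'' hides a real computation, and it is exactly where the fresh coordinates in $M^c\cap N$ reappear. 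The paper's $\pi^*$-comparison pushes all of the work into the cokernel side, where only twist ranges matter and nothing beyond Proposition~\ref{prop:projCoh} is needed.
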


Recall the following facts from \cite[Proposition C.1, C.2]{KelMi1}.

\begin{prop}[{\cite[Prop.2.1.12]{EGAIII}, \cite[01XT]{stacks-project}}] \label{prop:projCoh}
For any ring $A$, we have
\begin{enumerate}
 \item $H^i(\PP^n_A, \OO(\ast)) = 0$ for $i \neq 0, n$.
 \item The canonical homomorphism of graded rings 
 \[ A[t_0, \dots, t_n] \stackrel{\sim}{\to} H^0(\PP^n_A, \OO(\ast)) \]
 is a bijection.
 \item 
 \[ \tfrac{1}{t_0\dots t_n}A[\tfrac{1}{t_0}, \dots, \tfrac{1}{t_n}] \stackrel{\sim}{\to} H^n(\PP^n_A, \OO(\ast)) \]
where the morphism sends an element on the left to the corresponding section of the \v{C}ech cohomology with respect to the standard covering, and the left hand side has the standard grading. In particular, the highest degree nonzero elements are $\tfrac{a}{t_0\dots t_n}$, for $a \in A \setminus \{0\}$, and these have degree $-n-1$.  
\end{enumerate}
\end{prop}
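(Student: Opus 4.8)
The plan is to prove all three statements at once by the standard \v{C}ech computation (the one carried out in the references cited in the statement). Let $\mathfrak U=\{U_i=D_+(t_i)\}_{i=0}^n$ be the standard affine open cover of $\PP^n_A$, and for $S\subseteq\{0,\dots,n\}$ write $t_S:=\prod_{i\in S}t_i$, so $U_S:=\bigcap_{i\in S}U_i=D_+(t_S)$. Since $\PP^n_A$ is separated, every such $U_S$ is affine, hence for any quasi-coherent sheaf — in particular for $\OO(\ast)=\bigoplus_{m\in\ZZ}\OO(m)$ — \v{C}ech cohomology with respect to $\mathfrak U$ computes Zariski (hence derived) cohomology, with no Noetherian hypothesis on $A$ (this is the usual consequence of the vanishing of higher cohomology of quasi-coherent sheaves on affines, cf.\cite[01XB]{stacks-project}). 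Thus $H^i(\PP^n_A,\OO(\ast))$ is the $i$-th cohomology of the complex $C^\bullet$ with $C^p=\bigoplus_{|S|=p+1}\Gamma(U_S,\OO(\ast))$, where $\Gamma(U_S,\OO(\ast))$ is the $A$-subalgebra $A[t_0,\dots,t_n][t_S^{-1}]$ of $A[t_0^{\pm},\dots,t_n^{\pm}]$, graded so that sections of $\OO(m)$ over $U_S$ correspond to degree-$m$ elements; the differentials preserve this grading.

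The key point is that each $\Gamma(U_S,\OO(\ast))$ is a \emph{free} $A$-module on the monomials $\{t^{\mathbf a}:\mathbf a\in\ZZ^{n+1},\ a_j\ge 0\ \text{for}\ j\notin S\}$, so $C^\bullet$ splits as a direct sum over multidegrees $\mathbf a\in\ZZ^{n+1}$ of finite complexes $C^\bullet(\mathbf a)$ of finite free $A$-modules with $A$-linear differentials. For fixed $\mathbf a$ put $T=T(\mathbf a):=\{j:a_j<0\}$; then $t^{\mathbf a}\in\Gamma(U_S,\OO(\ast))$ iff $T\subseteq S$, so $C^p(\mathbf a)=\bigoplus_{S\supseteq T,\ |S|=p+1}A$ with the \v{C}ech sign rule. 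Via $S\mapsto S\setminus T$, this is $A$ tensored with the cochain complex of the full simplex on the vertex set $\{0,\dots,n\}\setminus T$, shifted up in cohomological degree by $|T|$, and including the $(-1)$-dimensional (augmentation) face $S=T$ exactly when $T\ne\varnothing$. Hence, since a full simplex on $\ge 1$ vertices is contractible: when $\varnothing\ne T\subsetneq\{0,\dots,n\}$, $C^\bullet(\mathbf a)$ is the augmented cochain complex of a contractible simplex, so acyclic; when $T=\varnothing$ it is the unaugmented cochain complex of $\Delta^n$, with cohomology $A$ in degree $0$; when $T=\{0,\dots,n\}$ it is $A$ concentrated in degree $n$.

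Summing over $\mathbf a$ yields the three claims. The multidegrees with $T(\mathbf a)=\varnothing$ are exactly $\mathbf a\in\NN^{n+1}$, giving $H^0(\PP^n_A,\OO(\ast))=\bigoplus_{\mathbf a\in\NN^{n+1}}A\,t^{\mathbf a}=A[t_0,\dots,t_n]$, compatibly with the grading and (the \v{C}ech product on $H^0$ being monomial multiplication) with the ring structure; those with $T(\mathbf a)=\{0,\dots,n\}$ are exactly $\mathbf a\in(\ZZ_{<0})^{n+1}$, giving $H^n(\PP^n_A,\OO(\ast))=\bigoplus_{\mathbf a\in(\ZZ_{<0})^{n+1}}A\,t^{\mathbf a}=\tfrac{1}{t_0\cdots t_n}A[\tfrac{1}{t_0},\dots,\tfrac{1}{t_n}]$, the class of $t^{\mathbf a}$ being represented by the top \v{C}ech cochain $t^{\mathbf a}\in\Gamma(U_{0\cdots n},\OO(\ast))$; and $H^i=0$ for $0<i<n$. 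Since $\deg t^{\mathbf a}=\sum_j a_j\le -(n+1)$ for $\mathbf a\in(\ZZ_{<0})^{n+1}$, with equality only at $\mathbf a=(-1,\dots,-1)$, the top-degree nonzero elements of $H^n$ are exactly $\tfrac{a}{t_0\cdots t_n}$ with $a\in A\setminus\{0\}$, of degree $-n-1$. The only delicate step is the combinatorics of the second paragraph — matching \v{C}ech signs with the simplicial differential and, crucially, getting the augmentation face to appear precisely when $T\ne\varnothing$, which is exactly what forces the vanishing in the middle range $0<i<n$; everything else is formal. Of course one may instead simply invoke \cite[Prop.2.1.12]{EGAIII} or \cite[01XT]{stacks-project}.
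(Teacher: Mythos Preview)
The paper does not give its own proof of this proposition: it is stated as a recollection with citations to \cite[Prop.~2.1.12]{EGAIII} and \cite[01XT]{stacks-project}, and no argument is supplied in the text. Your proposal is a correct and complete rendition of exactly the standard \v{C}ech computation carried out in those references, so there is nothing to compare against and nothing to correct.
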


\begin{prop}[{cf.[SGA6, VII, Lem.3.5]}] \label{prop:buCohLine}
Let $k$ be a ring and write $\AA^n := \AA^n_k$ for all $n \geq 0$.
Let $f: B_{n+1} = Bl_{\AA^{n+1}}\{0\} \to \AA^{n+1}$ be the blowup of affine $(n+1)$-space at the origin, and let $\mathcal{O}(1)$ be the line bundle associated to the exceptional divisor. Set $\mathcal{O}(i) := \mathcal{O}(1)^{\otimes i}$ for all $i \in \Z$.
Then $f_*\OO(i)$ is the coherent sheaf associated to $I^i$ where $I$ is the ideal of the origin, and we set $I^i := \Gamma(\AA^{n+1}, \OO_{\AA^{n+1}})$ for $i < 0$.
Moreover, we have
\[ R^qf_*(\OO(i)) = 0 \]
for all $q > 0$ and $i > {-n-1}$.
\end{prop}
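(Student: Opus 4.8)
The plan is to identify the blowup $B := B_{n+1} = Bl_{\AA^{n+1}}\{0\}$ with the total space of the tautological line bundle $L = \OO_{\PP^n}(-1)$ over $\PP^n := \PP^n_k$, and to reduce the computation of $Rf_*\OO(i)$ to the cohomology of the line bundles $\OO_{\PP^n}(m)$ recalled in Proposition~\ref{prop:projCoh}. We may assume $n \geq 1$: for $n = 0$ the ideal $I = (x_0)$ is invertible, $f$ is an isomorphism, $E = \{0\}$, and the assertion is immediate. For $n \geq 1$, writing $B = \underline{\mathrm{Proj}}_{\AA^{n+1}}\bigl(\bigoplus_{d \geq 0} I^d\bigr)$ with $I = (x_0, \dots, x_n)$ and sending the degree-one generators $y_0, \dots, y_n$ to $x_0, \dots, x_n$ realises $B$ inside $\AA^{n+1} \times \PP^n$ as the locus $\{x_i y_j = x_j y_i\}$. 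On the standard chart $\{y_j \neq 0\}$ one has $\rho^{-1}(\{y_j \neq 0\}) = \Spec k[x_j, (y_i/y_j)_{i \neq j}]$ with $x_i = x_j(y_i/y_j)$, so the second projection $\rho : B \to \PP^n$ is the total space of a line bundle with fibre coordinate $x_j$; comparing transition functions on overlaps identifies this line bundle with $L$. Thus $\rho$ is affine with $\rho_*\OO_B = \mathrm{Sym}_{\OO_{\PP^n}}(L^\vee) = \bigoplus_{j \geq 0}\OO_{\PP^n}(j)$ and $R^{>0}\rho_* = 0$, the exceptional divisor $E = V(I \cdot \OO_B)$ is the zero section, and comparing trivialisations again gives $\OO(1) = \OO_B(-E) = I \cdot \OO_B \cong \rho^*\OO_{\PP^n}(1)$, hence $\OO(i) \cong \rho^*\OO_{\PP^n}(i)$ for every $i \in \Z$.

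Next, by the projection formula along the affine morphism $\rho$ we get $\rho_*\OO(i) = \OO_{\PP^n}(i) \otimes \rho_*\OO_B = \bigoplus_{j \geq 0}\OO_{\PP^n}(i+j)$ and $R^{>0}\rho_*\OO(i) = 0$, so for all $q \geq 0$
\[ H^q(B, \OO(i)) \;=\; \bigoplus_{j \geq 0} H^q(\PP^n, \OO_{\PP^n}(i+j)) \]
as modules over $\Gamma(\AA^{n+1}, \OO) = \Gamma(B, \OO_B) = \bigoplus_{j \geq 0}H^0(\PP^n, \OO(j)) = k[x_0, \dots, x_n]$ (using Proposition~\ref{prop:projCoh} for the last identification). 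Since $f$ is proper and $\AA^{n+1}$ is affine, $R^q f_*\OO(i)$ is the quasi-coherent sheaf associated to the $k[x_0, \dots, x_n]$-module $H^q(B, \OO(i))$ (cohomology commuting with the localisations $k[x] \to k[x][h^{-1}]$).

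It then remains to read this off from Proposition~\ref{prop:projCoh} (applied with $A = k$): $H^i(\PP^n, \OO(\ast)) = 0$ for $i \neq 0, n$; $H^0(\PP^n, \OO(\ast)) = k[x_0, \dots, x_n]$ with the standard grading; and $H^n(\PP^n, \OO(\ast))$ is concentrated in degrees $\leq -n-1$. For $0 < q < n$ every summand vanishes, so $R^q f_*\OO(i) = 0$. For $q = 0$, $H^0(B, \OO(i)) = \bigoplus_{j \geq 0}(k[x])_{i+j}$, which equals $\bigoplus_{m \geq i}(k[x])_m = I^i$ when $i \geq 0$ and $\bigoplus_{m \geq 0}(k[x])_m = k[x] = \Gamma(\AA^{n+1},\OO)$ when $i < 0$; hence $f_*\OO(i)$ is the sheaf associated to $I^i$, with the convention $I^i := \Gamma(\AA^{n+1},\OO)$ for $i < 0$. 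For $q = n$, $H^n(B, \OO(i)) = \bigoplus_{j \geq 0}H^n(\PP^n, \OO(i+j))$ and $H^n(\PP^n, \OO(m)) = 0$ for $m > -n-1$; so when $i > -n-1$ each index $i + j$ (with $j \geq 0$) exceeds $-n-1$ and $R^n f_*\OO(i) = 0$. Combining the cases $0 < q < n$ and $q = n$ gives $R^q f_*\OO(i) = 0$ for all $q > 0$ and $i > -n-1$, as claimed.

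The argument is essentially bookkeeping once the two inputs are in place: the identification of $B$ with $\underline{\Spec}_{\PP^n}\bigl(\bigoplus_{j \geq 0}\OO_{\PP^n}(j)\bigr)$ and Proposition~\ref{prop:projCoh}. The only point that requires a little care is the explicit chart computation in the first paragraph — in particular getting the sign right, i.e.\ that $\OO(1) = \OO_B(-E) \cong \rho^*\OO_{\PP^n}(1)$ rather than $\OO_B(+E)$, which is exactly what makes $f_*\OO(i) = I^i$ come out for $i \geq 0$.
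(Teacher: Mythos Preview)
Your argument is correct. The paper does not actually prove this proposition: it is recalled without proof from \cite[Proposition~C.2]{KelMi1} (and ultimately from SGA6), so there is no ``paper's own proof'' to compare against. What you have written is precisely the standard argument one would expect --- identify $B_{n+1}$ with the total space of $\OO_{\PP^n}(-1)$, push forward along the affine projection $\rho$ to reduce to the cohomology of $\bigoplus_{j\geq 0}\OO_{\PP^n}(i+j)$, and read off the answer from Proposition~\ref{prop:projCoh}. The only minor comments are: (i) your parenthetical ``cohomology commuting with the localisations'' is not really the point --- once $\AA^{n+1}$ is affine and $\OO(i)$ quasi-coherent, the identification of $R^qf_*\OO(i)$ with the sheaf associated to $H^q(B,\OO(i))$ is just quasi-coherence of higher direct images plus vanishing of higher cohomology on affines; and (ii) in the $n=0$ aside, note that $f_*\OO(i)$ for $i<0$ is only \emph{isomorphic} to $\OO_{\AA^1}$ (via the canonical section of $\OO(E)$), not literally equal, which is how the statement should be read in general as well.
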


\begin{proof}[Proof of Proposition \ref{prop:standardBLowupMOmega}]
The case $q=0$ is by \cite[Proposition 4.6]{KelMi1}. We assume $q \geq 1$ in the following. 
Consider the short exact sequence of quasi-coherent $\OO_{\hB}$-modules
\[ 0 \to \pi^*\ulMOmega^q_{\sA} \to \ulMOmega^q_{\sB} \to \sC_q \to 0, \]
where the map $\pi^*\ulMOmega^q_{\sA} \to \ulMOmega^q_{\sB}$ is injective since $\ulMOmega^q_{\sA}$ is flat over $\ol{A}$ by Example~\ref{ex:locfreeomega}, and $\sC_q$ is defined to be its cokernel.
By the projection formula and Prop.~\ref{prop:buCohLine},
we have 
\[ \ulMOmega^q_{\sA} \cong \ulMOmega^q_{\sA} \otimes R\pi_*\OO_{\hB}\cong R\pi_*\pi^*\ulMOmega^q_{\sA}.    \]
Therefore, it remains to show $R\pi_*\sC_q \cong 0$.

First we consider the case $q = 1$. 
Set $M = \{ i\ |\ r_i \neq 0\}$.
Then, since the center of the blow up $\AA^T = \cap_{\nu \in N} \{x_\nu = 0\}$ is contained in $|A^\infty| = \cup_{\mu \in M} \{x_\mu = 0\}$ by assumption, we have  $M \cap N \neq \varnothing$.
We claim that the morphism 
\[ 
\pi^*\ulPOmega^1_{\sA} \to \ulPOmega^1_{\sB}
 \]
can be identified with the canonical inclusion
\newcommand{\indexC}{M}
\begin{equation} \label{equa:MOmegaAlogBInc}
\OO^{\oplus (\indexC \cup \indexB)} \oplus \OO^{\oplus (\indexC^c \cap \indexA)}
\stackrel{}{\longrightarrow}
\OO^{\oplus (\indexC \cup \indexB)} \oplus \OO(-1)^{\oplus (\indexC^c \cap \indexA)}.
\end{equation}
where $(M\cup T)^c = M^c \cap N$.
Indeed, $\hB$ admits the open covering $\{ U_j : j \in \indexA \}$ where
\[ U_j = \Spec(k[x_\tau, \tfrac{x_\nu}{x_j}, x_j \ |\  \tau \in \indexB,\ \nu \in \indexA,\ \nu \neq j ]), \]
and the restriction of $B^\infty$ to $U_j$ is the simple normal crossing divisor whose local parameter is given by 
\[
\prod_{\mu \in M} x_{\mu}
= \prod_{\tau \in M \cap T} x_{\tau} \cdot \prod_{\nu \in M \cap N} x_{\nu} 
= \left ( \prod_{\tau \in M \cap T} x_{\tau} \cdot \prod_{\nu \in M \cap N} (x_{\nu}/x_j) \right ) \cdot x_j^{|M \cap N|}.
\]
In particular, since $M \cap N \neq \varnothing$, we find that $x_j$ divides the modulus $B^\infty$, and hence $\dlog x_j$ belongs to $\ulPOmega^1_{(U_j, B^\infty \cap U_j)}$.

Thus, the sub-$\OO_{\hB}(U_j)$-module $\ulPOmega^1_{\sB} \subseteq \Omega^1_{k[x_0, \dots, x_n, x_0^{-1}, \dots, x_n^{-1}]}$ is free with $\OO_{\hB}(U_j)$-basis
\[
\left \{
\begin{array}{cc}
  \dlog x_\tau 
& \textrm{ for } \tau \in \indexC \cap \indexB\phantom{,\ \tau \neq j} \\
  \dlog \tfrac{x_\nu}{x_j} 
& \textrm{ for } \nu \in \indexC \cap \indexA,\ \nu \neq j  \\
  d x_\tau 
& \textrm{ for } \tau \in \indexC^c \cap \indexB\phantom{,\ \tau \neq j} \\ 
  d\tfrac{x_\nu}{x_j} 
& \textrm{ for } \nu \in \indexC^c \cap \indexA,\ \nu \neq j \\
 \dlog x_j
& 
\end{array}
\right.
\]
Since we have 
\[
\dlog \tfrac{x_\nu}{x_j} = \dlog x_\nu - \dlog x_j, \quad  d\tfrac{x_\nu}{x_j} = \tfrac{1}{x_j}dx_\nu - \tfrac{x_\nu}{x_j}\dlog x_j,
\] 
this is simplified as 
\begin{equation} \label{equa:MOmegaGlobalBasis}
\left \{
\begin{array}{cc}
  \dlog x_\tau 
& \textrm{ for } \tau \in \indexC \cap \indexB \\
  \dlog x_\nu 
& \textrm{ for } \nu \in \indexC \cap \indexA  \\
  d x_\tau 
& \textrm{ for } \tau \in \indexC^c \cap \indexB \\ 
  \tfrac{1}{x_j}dx_\nu 
& \textrm{ for } \nu \in \indexC^c \cap \indexA 
\end{array}
\right.
\end{equation}
and hence we have isomorphisms for all $i$,
\[
\ulPOmega^1_{(U_j,B^\infty \cap U_j)} 
\cong \bigoplus_{\mu \in M}\OO_{U_j} \dlog x_\mu \oplus 
\bigoplus_{\tau \in M^c \cap T}\OO_{U_j} dx_\tau \oplus
\bigoplus_{\nu \in M^c \cap N}\OO_{U_j} \tfrac{1}{x_j}dx_\nu ,
\]
which glue to given the desired isomorphism 
\[
\ulPOmega^1_{\sB} \cong \bigoplus_{\mu \in M}\OO_{\ol{B}} \dlog x_\mu \oplus 
\bigoplus_{\tau \in M^c \cap T}\OO_{\ol{B}} dx_\tau \oplus
\bigoplus_{\nu \in M^c \cap N}\OO_{\ol{B}} \tfrac{1}{x_j}dx_\nu.
\]
On the other hand, by Example~\ref{ex:locfreeomega}, the globally free sub-$\OO_{\hB}(U_j)$-module $\pi^*\ulPOmega^1_{\sA}$ of $\Omega^1_{\ZZ[x_0, \dots, x_n, x_0^{-1}, \dots, x_n^{-1}]}$ has basis
\begin{equation} 
\left \{
\begin{array}{cc}
  \dlog x_\mu 
& \textrm{ for } \mu \in \indexC \\
  d x_\mu 
& \textrm{ for } \mu \in \indexC^c
\end{array}
\right .
\end{equation}
Hence, the identification of 
$\pi^*\ulPOmega^1_{\sA} \to \ulPOmega^1_{\sB}$ 
with Eq.\eqref{equa:MOmegaAlogBInc}.
Thus, we have a canonical isomorphism
\[
\Coker (\pi^*\ulPOmega^1_{\sA} \to \ulPOmega^1_{\sB}) \cong \bigoplus_{\nu \in M^c \cap N} \Coker (\OO_{\ol{B}} \hookrightarrow \OO_{\ol{B}} (-1)).
\]

We move on to the analysis of $\ulMOmega^1$.
First note that the canonical comparison morphism $\pi^*\ulMO(\sA) \to \ulMO(\sB)$ can be identified with the canonical inclusion 
$\ulMO_{\sB}(i{-}1) \subseteq \ulMO_{\sB}$
where $i = |\indexC \cap \indexA| = $ the number of prime divisors of $\sA$ containing the centre $\AA^{T}$ of the blowup. 
Indeed, noting $\Bl_{\AA^I} \AA^T \cong \AA^T \times \Bl_{\AA^N} \{0\}$, we are reduced to showing the claim when $\AA^T=\{0\}$, and this case can be checked by an elementary direct computation (see the proof of \cite[Proposition 4.8]{KelMi1} for more detail).
We also have 
\[ \ulMOmega^1(\sX) = \ulPOmega^1_{\sX}   \otimes \ulMO(\sX) \]
by Lem.~\ref{lem:MOmega-otimes}.
Therefore, we can identify 
\[
\pi^*\ulMOmega^1_\sA \to \ulMOmega^1_\sB 
\]
with 
\begin{equation}\label{eq:MOmegaidem}
\ulMO_{\sB}(i-1) \otimes_{\OO_{\ol{B}}} \left ( \bigoplus_{\nu \in M^c \cap N} \OO_{\ol{B}} \right )
\to 
\ulMO_{\sB}(i-1) \otimes_{\OO_{\ol{B}}} \left ( \bigoplus_{\nu \in M^c \cap N} \OO_{\ol{B}} (-1) \right ).
\end{equation}
Therefore, we have
\[
\sC_1 \cong \ulMO_{\sB}(i-1) \otimes_{\OO_{\ol{B}}}  \left ( \bigoplus_{\nu \in M^c \cap N} \OO_{\ol{B}} (-1) /  \OO_{\ol{B}} \right ).
\]
If $M^c \cap N = \emptyset$, then we have $\sC_1 = 0$ and there is nothing to prove.
Hence we may assume $M^c \cap N \neq \emptyset$.
In particular, we have $0<i = |M \cap N| < |N|$.
Then, noting that $\sI:=\OO_{\ol{B}}(1)$ is the sheaf of ideals defining the exceptional divisor $\iota : E \subset \ol{B}$ of the blow up $\pi$, we have a canonical isomorphism $\sI^j / \sI^{j+1} \cong (\OO/\sI) (i) \cong \iota_* \iota^* \OO(i)$ for any $j \in \Z$.
Moreover, since $\ulMO_{\sA} \cong \OO_{\ol{A}}$ is a globally free line bundle, we have 
\[
\iota^* \ulMO_{\sB} = \iota^* (\ulMO_{\sA} (1-i)) \cong \OO (1-i),
\]
Thus, $\sC_1$ admits a filtration whose graded pieces are of the form
\[
\left (\sI^j \ulMO_{\sB} / \sI^{j+1} \ulMO_{\sB} \right) (-1) \cong \iota_* \iota^* \left ( \ulMO_{\sB} (j-1) \right) \cong \iota_* \OO_E (j-i)
\]
where $j=0,1,\dots,i-1$, and hence $j-i = -i,\dots,-1$.
Since $0<i<|N|$ as we observed above, we obtain 
\begin{equation}\label{equa:boundBound}
-|N| < -i < \cdots < -1 <0.
\end{equation}
Since $E \cong \PP^{|N|}$ and $\iota_*$ is exact,
the pushforward $R\pi_*(-)$ of all of these vanish by Prop.~\ref{prop:projCoh},
finishing the proof for $q=1$.

The proof for the case $q>1$ goes in the same way, with a slight modification.
Indeed, by Lem.~\ref{lem:extPOmega} and Lem.~\ref{lem:MOmega-otimes}, we have 
\[ \ulMOmega^q_\sX = \ulMO_\sX \otimes_{\OO_{\hX}} \bigwedge\nolimits^q \ulPOmega^1_{\sX}. \]
Since $\pi^*$ commutes with the exterior power, we can identify 
$\pi^*\ulPOmega^q_{\sA} \to \ulPOmega^q_{\sB}$ 
with the $q$th alternating power of the morphism Eq.\eqref{equa:MOmegaAlogBInc}. This alternating power is a sum of canonical inclusions
$ \OO \to \OO(-k)$
where now 
\begin{equation} \label{equa:MOmegaBUCohBounds}
k \in \{0, 1, \dots, |M^c \cap N|\}.
\end{equation}
Indeed, in terms of the local basis Eq.\eqref{equa:MOmegaGlobalBasis}, this $q$th exterior power has basis elements which are locally a wedge product of $q$ sections of the form $\dlog x_\mu$, $dx_\tau$, or $\tfrac{1}{x_j}dx_\nu$, and we can have at most $|M^c \cap N|$ factors of the form $\tfrac{1}{x_j}dx_\nu$. As above, tensoring with the canonical inclusion $\pi^*\ulMO_\sA \cong \ulMO_\sB(i-1) \to \ulMO_\sB$, we find that $\pi^*\ulMOmega^q_\sA \to \ulMOmega^q_\sB$ is still a direct sum of morphisms of the form 
\begin{equation}\label{eq:mor-k}
\ulMO_{\sB}(i-1) \otimes_{\OO_{\ol{B}}} \OO_{\ol{B}} 
\to 
\ulMO_{\sB}(i-1) \otimes_{\OO_{\ol{B}}}  \OO_{\ol{B}} (k),
\end{equation}
where $k \in \{0,1,\dots,|M^c \cap N|\}$.
cf. Eq.\eqref{eq:MOmegaidem}.
It suffices to show that the cokernel of Eq.\eqref{eq:mor-k} vanishes after applying $R^q\pi_*$ for $q>0$.
If $k=0$ then the assertion is trivial since the cokernel of Eq.\eqref{eq:mor-k} is $0$.
Assume $k>0$.
Then the cokernel of \label{eq:mor-k} admits a filtration whose graded pieces are of the form
\[
\left ( \sI^j \ulMO_{\sB} / \sI^{j+1} \ulMO_{\sB} \right ) (-k) \cong \iota_* \iota^* \left ( \ulMO_{\sB} (j-k) \right ) \cong \iota_* \OO_{E} (j+1-i-k),
\]
where 
\[
j+1-i-k = -(i+k-1), \dots ,-k <0.
\]
Thus, to obtain the desired vanishing, it remains to prove $i+k-1 < |N|$.
But this follows from 
\[
i+k-1 \leq |M \cap N| + |M^c \cap N| - 1= |N| - 1 < |N|.
\]
This finishes the proof.
\end{proof}

\subsection{End of proof of Theorem \ref{prop:bi-MOmega}}
Since the modulus Hodge sheaves $\ulMOmega^q$ are quasi-coherent \'etale shaves by Prop.~\ref{prop:OmegaEtaleQC}, their Zariski and \'etale cohomologies agree. 
Therefore, it suffices to show that the morphism $\ulMOmega^q_{\sX_{\et}} \to Rf_*\ulMOmega^q_{\sY_{\et}}$ is an isomorphism for any $q \geq 0$.

By the assumption that $Z$ has normal crossing with $\mX$, for any point $x \in \hX$ there exists an \'etale neighborhood $p:\hU \to \hX$ of $x$ and an \'etale morphism $q:\hU \to \AA^n = \Spec k[t_1,\dots,t_n]$ such that $p^*\mX = q^* H$ and $p^{-1}Z = q^{-1} Z_0$, where $H$ and $Z_0$ are the closed subschemes of $\AA^n$ given by
\[
H=\{\prod_{a \in A} t_a^{r_a} = 0\}, \quad Z_0 = \{ t_b =0 ; b \in B \},
\]
for some subsets $A,B \subset \{1,\dots,n\}$ and positive integers $r_a$ for $a \in A$.
Since our problem is \'etale local over $\hX$, by replacing $f$ by $f \times_{\hX} \hU$, we may and do assume from the beginning that there exists an \'etale morphism $p:\hX \to \hU$ which induces a minimal ambient morphism $f:\sX \to (\AA^n,H)$ such that $Z = f^{-1}Z_0$.
Since $\hX \to \AA^n$ is \'etale (hence flat), we obtain a cartesian diagram
\[\xymatrix{
\hY \ar[r]^{q'} \ar[d]_f \ar@{}[rd]|\square & \hY_0 \ar[d]^{f_0} \\
\hX \ar[r]_q & \AA^n
}\]
where $f_0$ is the blow-up of $\AA^n$ at $Z_0$ and $q'$ is the morphism induced by the universal property of blow-up.
By Prop.~\ref{prop:standardBLowupMOmega}, we know that the desired assertion holds for $f_0$, and hence we have 
\[
\ulMOmega^q_{(\AA^n,H)} \cong Rf_{0*} \ulMOmega^q_{(\hY_0,f_0^*H)}
\]
for all $q \geq 0$. 
By applying $q^* = Rq^*$ to the above isomorphism and by using the flat base change $q^* Rf_{0*} \cong Rf_* q^{\prime*}$ \cite[02KH]{stacks-project}, we obtain 
\[
\ulMOmega^q_{\sX}=q^* \ulMOmega^q_{\sA} \cong q^* Rf_{0*} \ulMOmega^q_{\sY_0} \cong Rf_* q^{\prime*} \ulMOmega^q_{\sY_0} = Rf_* \ulMOmega^q_{\sY},
\]
where the first and the last equalities hold by by quasi-coherence for the \'etale topology.
By using the quasi-coherence again, we deduce from the above isomorphism 
\[
\ulMOmega^q_{\sX_{\et}} \cong Rf_* \ulMOmega^q_{\sY_{\et}},
\]
finishing the proof of Theorem \ref{prop:bi-MOmega}.

\section{The modulus Hodge realisation}\label{sec:Hodge-Real}

\subsection{Representability of Hodge cohomology}

We are now ready to prove our main result Theorem \ref{thm:main-II}.

\begin{defi}
Write $\ulMSm_k^{\nc} \subseteq \ulMSm_k$ (resp. $\ulMCor_k^{\nc} \subseteq \ulMCor_k$) for the full subcategory of quasi-projective normal crossings modulus pairs. 
\end{defi}

\begin{proof}[Proof of Theorem \ref{thm:main-II}]
Fix a non-negative integer $q$.
By Thm.~\ref{thm:transfer-MOmega}, we know that $\ulMOmega^q$ defines a presheaf on $\ulMCor_k^{\nc}$. 
Since $k$ has characteristic $0$, Hironaka's resolution of singularities tells us that the inclusion functor $\ulMCor_k^{\nc} \subset \ulMCor$ is an equivalence of categories, and hence we obtain a presheaf
\[
\RMOmega^q : \ulMCor_k^{\op} \xleftarrow{\sim}  \ulMCor_k^{\nc,\op} \xrightarrow{\ulMOmega^q} \text{(abelian groups)}.
\]

We claim that $\RMOmega^q$ belongs to $\Shv_{\ulMNis} (\ulMCor_k)$. To see this, by definition, it suffices to check that for any $\sX \in \ulMCor_k$, the presheaf $\RMOmega^q_{\sX}$ on the small site $\hX_{\et}$ is a sheaf. 
For this, take a resolution of singularities $p:\hY \to \hX$ such that $\hY$ is smooth and $p^*\mX$ is normal crossing. 
Then, by definition, we have $\RMOmega^q_{\sX} = \ulMOmega^q_{(\hY,\mY)}$, and the right hand side is an \'etale sheaf on $\hY$. Noting that any \'etale covering of $\hX$ induces an \'etale covering of $\hY$, we conclude that $\RMOmega^q_{\sX}$ is an \'etale sheaf on $\hX$.

By abuse of notation, we denote the image of $\RMOmega^q$ under the natural functor $\Shv_{\ulMNis} (\ulMCor_k) \to D(\Shv_{\ulMNis} (\ulMCor_k))$ by the same symbol $\RMOmega^q$.
We will prove that $\RMOmega^q [n]$ is a $\bcube$-local object for any $n \in \Z$, that is, the functor 
\[
F(-) := \Hom_{D(\Shv_{\ulMNis}(\ulMCor_k))}(\ZZ_{\tr}(-), \RMOmega^q[n])
\]
is cube invariant. To see this, by resolution of singularities, it suffices to show that $F(\sX) \cong F(\sX \otimes \bcube)$ for any $\sX \in \ulMCor_k^{\nc}$. We fix such $\sX$ and compute:
\begin{align*}
F(\sX)
=^{1} \colim_{p:\sY \to \sX} H^n_{\Nis} (\hY, \RMOmega^q_{\sY})
=^{2} \colim_{p:\sY \to \sX; \sY \in \ulMCor^{\nc}} H^n_{\Nis} (\hY, \RMOmega^q_{\sY}),
\end{align*}
where the first colimit runs over abstract admissible blow-ups of $\sX$, and the second one runs over those such that $\sY$ is normal crossing. Also, The first equality follows from a general result from \cite[Theorem 4.6.3]{kmsy1}, and the second equality is a consequence of resolution of singularities. 
By definition of $\RMOmega^q$, we have $\RMOmega^q_{\sY} = \ulMOmega^q_{\sY}$. Moreover, by the blow-up invariance of cohomologies, Thm.~\ref{prop:bi-MOmega}, and by weak factorisation, we have  
\[
H^n_{\Nis} (\hY, \ulMOmega^q_{\sY}) \cong H^n_{\Nis} (\hX, \ulMOmega^q_{\sX})
\]
for any abstract admissible blow-ups. 
Therefore, we obtain 
\[
F(\sX) \cong H^n_{\Nis} (\hX, \ulMOmega^q_{\sX}).
\]
Since $\sX \otimes \bcube \in \ulMCor_k^{\nc}$, we also have 
\[
F(\sX \otimes \bcube) \cong H^n_{\Nis} (\hX \times \PP^1, \ulMOmega^q_{\sX \otimes \bcube}).
\]
Again by the cube invariance of cohomologies, we conclude $F(\sX) \cong F(\sX \otimes \bcube)$, which shows $\RMOmega^q$ is a $\bcube$-local object, as desired. 
Since $\ulMDM^{\eff} \subset D(\Shv_{\ulMNis} (\ulMCor_k^{\nc}))$ is identified with the full subcategory consisting of $\bcube$-local objects, we have $\RMOmega^q \in \ulMDM^{\eff}_k$.
Moreover, the above computation literally proves 
\[
\Hom_{\ulMDM^{\eff}_k} (\Z_\tr (\sX) , \RMOmega^q[n]) \cong  H^n_{\Nis} (\hX, \ulMOmega^q_{\sX}),
\]
concluding the proof of Theorem \ref{thm:main-II}.
\end{proof}

\begin{rema}\label{rem:strong-res}
In the above proof, we can replace the use of weak factorisation by a variant of Hironaka's resolution of singularities (often called strong resolution of singularities, see for example \cite[Definition 3.4 (2)]{bivariant-FV}). We thank Junnosuke Koizumi and Shuji Saito for pointing this out in a conversation with the second author. Indeed, what we have to show is that for any normal crossing $\sX$ and for any abstract admissible blow-up $p:\sY \to \sX$ with $\sY$ also normal crossing, $p$ induces an isomorphism $H_{\Nis}^n(\hX,\ulMOmega^q_{\sX}) \cong H_{\Nis}^n(\hY,\ulMOmega^q_{\sY})$. Indeed, by Hironaka's theorem, the proper birational morphism $p:\hY \to \hX$ is dominated by a string of blow-ups along normal crossing smooth centers $q:\hY_n \to \hY_{n-1} \to \cdots \to \hY_1 \to \hX$. Since the induced map $\hY_n \to \hY$ is again proper birational, we can find another string $r : \hZ_m \to \cdots \to \hZ_1 \to \hY$ factoring through $\hY_{n}$. Thus, we obtain a diagram $\hZ_m \to   \hY_n \to \hY \to \hX$, inducing a diagram of modulus pairs $\sZ_m \to  \sY_n \to \sY \to \sX$, where the modulus for the first three terms are given by pullback of $\mX$. Then, by Thm.~\ref{prop:bi-MOmega}, we know that the composites $\sZ_m \to \sY$ and $\sY_n \to \sX$ induce isomorphisms on the cohomology groups. This immediately implies that all morphisms in the above diagram, including $p$, induce isomorphisms on cohomology groups.
\end{rema}

\subsection{Modulus de Rham realisation}

Finally, we prove that the modulus Hodge realisation that we constructed \S \ref{sec:biOmega} forms a $\otimes$-triangulated functor which extends the de Rham realisation functor from \cite{LcW09}, at least on the category of geometric motives $\DM^{\eff}_{\gm}$.

Recall from \cite{kmsy3} that $\ulMDM^{\eff}_{\gm}$ is the full subcategory of $\ulMDM^{\eff}$ consisting of compact objects. 
It is proven in \cite{kmsy3} that $\ulMDM^{\eff}_{\gm}$ has a natural symmetric monoidal structure.  

\begin{thm}\label{thm:monoidal}
Suppose that $k$ has characteristic $0$, and consider the triangulated functor
\[
R_{dR} : \ulMDM^{\eff,\op}_{\gm} \to D(\mathrm{Vect}_k) ; \quad M \mapsto \Hom_{\ulMDM^{\eff}}(-,\oplus_{q \geq 0}\RMOmega^q[-q]),
\]
where $D(\mathrm{Vect}_k)$ denotes the derived category of the category of vector spaces over $k$, equipped with the monoidal structure that is induced by the usual tensor product of $k$-vector spaces. 
Then $R_{dR}$ is monoidal. 
\end{thm}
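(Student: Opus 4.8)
The plan is as follows. Write $E := \bigoplus_{q \geq 0}\RMOmega^q[-q]$; since the $M(\sX)$ are compact generators of $\ulMDM^{\eff}_k$ and each $\RMOmega^q[-q]$ is $\bcube$-local by the proof of Theorem~\ref{thm:main-II}, the object $E$ is again $\bcube$-local, so $E \in \ulMDM^{\eff}_k$ and $R_{dR}(M)=\Hom_{\ulMDM^{\eff}}(M,E)$ (understood in $D(\mathrm{Vect}_k)$, which makes sense because the target $E$ is a complex of $k$-vector spaces). I would deduce the theorem from the statement that $E$ carries the structure of a commutative monoid $(E,\mu,\eta)$ in the symmetric monoidal triangulated category $\ulMDM^{\eff}_k$. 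Indeed, any such structure makes the contravariant functor $\Hom_{\ulMDM^{\eff}}(-,E)$ lax symmetric monoidal: the tensor constraint $R_{dR}(M)\otimes_k R_{dR}(N)\to R_{dR}(M\otimes N)$ sends $\varphi\otimes\psi$ to $\mu\circ(\varphi\otimes\psi)$, and the unit constraint $k\to R_{dR}(\mathbf 1)$ classifies $\eta$; the only remaining work is then to check that these two structure maps are isomorphisms.

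To build $(E,\mu,\eta)$ I would work before $\bcube$-localisation, in $D(\mathrm{Shv}_{\ulMNis}(\ulMCor_k))$ with its Day-convolution tensor product (so that $\ZZ_{\tr}(\sX)\otimes\ZZ_{\tr}(\sY)=\ZZ_{\tr}(\sX\otimes\sY)$): a monoid structure on the complex $\sX\mapsto\bigoplus_q\ulMOmega^q_\sX[-q]$ amounts to a family of maps $\bigoplus_p\ulMOmega^p_\sX\otimes\bigoplus_q\ulMOmega^q_\sY\to\bigoplus_m\ulMOmega^m_{\sX\otimes\sY}$, natural in $\sX,\sY\in\ulMCor_k^{\nc}\simeq\ulMCor_k$, associative and graded-commutative. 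The natural candidate is the exterior wedge product of differential forms. That it lands in $\ulMOmega^m_{\sX\otimes\sY}$ and not merely in $\Omega^m_{\iX\times\iY}$ can be checked Zariski-locally, where by Example~\ref{exam:sncOmegaFree} it reduces to the standard K\"unneth decomposition
\[ \textstyle\bigoplus_{p+q=m}\Omega^p_{\hX}(\log|\mX|)\boxtimes\Omega^q_{\hY}(\log|\mY|)\xrightarrow{\ \sim\ }\Omega^m_{\hX\times\hY}(\log|\mX\times\hY+\hX\times\mY|) \]
together with the identity of divisors $(\mX-|\mX|)\times\hY+\hX\times(\mY-|\mY|)=(\mX\times\hY+\hX\times\mY)-|\mX\times\hY+\hX\times\mY|$; in fact these combine to a canonical isomorphism $\bigoplus_{p+q=m}\ulMOmega^p_\sX\boxtimes\ulMOmega^q_\sY\xrightarrow{\ \sim\ }\ulMOmega^m_{\sX\otimes\sY}$, which globalises by the \'etale descent of Proposition~\ref{prop:OmegaEtaleQC}. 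Compatibility of the wedge product with modulus correspondences (not just with ambient morphisms) follows as in the proof of Theorem~\ref{thm:transfer-MOmega}: the cup product on the interiors $\bigoplus_q\Omega^q[-q]$ is transfer-compatible — it is the commutative monoid structure of the de Rham complex in $\DM^{\eff}_k$, cf.\ \cite{LcW09} — and the inclusions $\ulMOmega^\bullet_\sX\hookrightarrow\Omega^\bullet_{\iX}$ are injective, so the modulus cup product is its unique lift. Sheafifying and then $\bcube$-localising (both monoidal operations) produces $(E,\mu)$; the unit $\eta\colon\mathbf 1\to E$ is the tautological section $1\in\ulMO(\Spec k,\varnothing)=\RMOmega^0(\Spec k)=k$. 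Associativity, graded-commutativity and unitality — with the Koszul signs coming from the shifts $[-q]$ matching the sign in $\alpha\wedge\beta=(-1)^{pq}\beta\wedge\alpha$ — are inherited from $\bigoplus_q\Omega^q$, since everything is detected on interiors.

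For strong monoidality, the unit map is an isomorphism because $R_{dR}(\mathbf 1)=\bigoplus_q H^{*-q}_{\Nis}(\Spec k,\ulMOmega^q_{(\Spec k,\varnothing)})=k$ concentrated in degree $0$ (as $\Omega^q_{k/k}=0$ for $q>0$) and the constraint $k\to R_{dR}(\mathbf 1)=k$ is the identity. For the tensor constraint: both $R_{dR}(-)\otimes_k R_{dR}(N)$ and $R_{dR}(-\otimes N)$ are exact functors $\ulMDM^{\eff,\op}_{\gm}\to D(\mathrm{Vect}_k)$, and $\ulMDM^{\eff}_{\gm}$ is the thick subcategory generated by the $M(\sX)$ with $\sX\in\ulMCor_k^{\nc}$ (\cite{kmsy3} together with resolution of singularities), so a standard d\'evissage in each variable reduces to $M=M(\sX)$ and $N=M(\sY)$ with $\sX,\sY$ — hence also $\sX\otimes\sY$ — normal crossings. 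Since each $\ulMOmega^q_\sX$ is then a locally free $\OO_{\hX}$-module of finite rank (Example~\ref{exam:sncOmegaFree}), hence flat over $k$, K\"unneth for quasi-coherent cohomology over the field $k$ (\cite[0B6B]{stacks-project}) gives $R\Gamma(\hX,\ulMOmega^p_\sX)\otimes_k R\Gamma(\hY,\ulMOmega^q_\sY)\xrightarrow{\sim}R\Gamma(\hX\times\hY,\ulMOmega^p_\sX\boxtimes\ulMOmega^q_\sY)$; summing over $p,q$ and applying the sheaf isomorphism above identifies $R_{dR}(M(\sX))\otimes_k R_{dR}(M(\sY))$ with $\bigoplus_m R\Gamma(\hX\times\hY,\ulMOmega^m_{\sX\otimes\sY})[-m]=R_{dR}(M(\sX\otimes\sY))$, and using the description of $R_{dR}$ on representables from the proof of Theorem~\ref{thm:main-II} one checks that this composite is exactly the lax constraint built from $\mu$, both being induced by the exterior wedge product. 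Hence $R_{dR}$ is strong symmetric monoidal.

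I expect the main obstacle to be the honest construction of $\mu$ as a morphism of complexes of Nisnevich sheaves with transfers compatible with $\bcube$-localisation — that is, checking that the exterior wedge product on the modulus Hodge sheaves descends through \'etale covers (Proposition~\ref{prop:OmegaEtaleQC}), is compatible with modulus correspondences, and is coherently associative and symmetric — together with the sheaf-level K\"unneth isomorphism $\bigoplus_{p+q=m}\ulMOmega^p_\sX\boxtimes\ulMOmega^q_\sY\xrightarrow{\sim}\ulMOmega^m_{\sX\otimes\sY}$, whose verification rests on the reduction to the strict normal crossings, \'etale-local situation. Once these are in place, the passage to $R_{dR}$ and the proof that the resulting constraints are isomorphisms are essentially formal.
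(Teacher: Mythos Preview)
Your approach is correct and parallels the paper's: construct a lax monoidal constraint, reduce by d\'evissage to representables $M(\sX)\otimes M(\sY)$ with $\sX,\sY$ normal crossing, and verify the K\"unneth isomorphism there. The execution differs in two places. You build the constraint by first equipping $E=\bigoplus_q\RMOmega^q[-q]$ with a commutative monoid structure via the exterior wedge product and then reading off the lax structure formally; the paper is terser, producing $\mu_{M,N}$ directly ``by the universal property of $k$-modules'' and inducting on the total length of bounded complexes in $K^b(\ulMCor)$ rather than using thick-subcategory generation. Your formulation is more transparent and correctly isolates transfer-compatibility of the exterior product (via injectivity on interiors and the known transfer structure on $\Omega^\bullet$) as the main verification. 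For the K\"unneth step on representables, the paper reduces further---Zariski then \'etale---to the case where $\hX=\Spec A$, $\hY=\Spec B$ are affine with moduli given by monomials in a system of parameters, and then computes at the level of global sections using $\Omega^\bullet_{A/k}\otimes_k\Omega^\bullet_{B/k}\cong\Omega^\bullet_{A\otimes_k B/k}$; you instead invoke K\"unneth for quasi-coherent cohomology over a field together with the sheaf-level isomorphism $\bigoplus_{p+q=m}\ulMOmega^p_\sX\boxtimes\ulMOmega^q_\sY\cong\ulMOmega^m_{\sX\otimes\sY}$. Your route is slightly cleaner, since it avoids an implicit nested Mayer--Vietoris induction in the paper's Zariski reduction (covering $\hX$ and $\hY$ by affines while remaining in the length-zero base case).
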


\begin{rema}\label{rem:dervec}
Note that the derived category of the category of $k$-vector spaces is naturally equivalent to the category of graded $k$-vector spaces.
Also, we use the convention that a monoidal functor $F:\sC \to \sD$ satisfies that for any objects $X,Y \in \sC$, the structure morphism $F(X) \otimes_{\sD} F(X) \to F(X\otimes_{\sC} Y)$ is an isomorphism.
\end{rema}

\begin{proof}
Let $K^b (\ulMCor)$ denote the bounded homotopy category of the additive category $\ulMCor$.
Recall from \cite{kmsy3} that the additive Yoneda embedding $\ulMCor \to \PSh (\ulMCor)$ induces a natural equivalence of categories
\[
\left [ \frac{K^b (\ulMCor)}{\langle \mathrm{CI}, \mathrm{MV} \rangle} \right ]^{\natural} \xrightarrow{\sim} \ulMDM^{\eff}_{\gm}; \quad C^\bullet \mapsto C^\bullet
\]
where the denominator on the left hand side is the thick subcategory of $K^b (\ulMCor)$ generated by cube invariance and Mayer-Vietoris triangles (see \cite[\S 3.1]{kmsy3} for more detail), and $(-)^\natural$ means taking idempotent completion. 

Let $M,N \in \ulMDM^{\eff}_{\gm}$ be any objects. 
By the universal property of $k$-modules, we obtain a natural morphism 
$\mu_{M,N}:R_{dR}(M) \otimes_k R_{dR}(N) \to R_{dR}(M \otimes N)$. Our goal is to show that $\mu_{M,N}$ is an isomorphism for any $M,N$.
Since $\mu_{M,N}$ is natural in $M,N$ and $R_{dR}$ is additive, it suffices to show that the composite 
\[
\tilde{R}_{dR}: K^b (\ulMCor)^{\op} \to \ulMDM^{\eff,\op}_{\gm} \xrightarrow{R_{dR}}  D(\mathrm{Vect}_k)
\]
is monoidal. 
Take any bounded complexes $M, N \in K^b (\ulMCor)$. 
Then, by the universal property of $k$-modules, we obtain a natural morphism 
$\mu_{M,N}:\tilde{R}_{dR}(M) \otimes_k \tilde{R}_{dR}(N) \to \tilde{R}_{dR}(M \otimes N)$. Our goal is to show that $\mu_{M,N}$ is an isomorphism for any $M,N$.
Let $m,n$ be the lengths of the bounded complexes $M,N$, respectively. We proceed by induction on $\ell=m+n$. 

When $\ell=0$ (and hence $m=n=0$), we have $M=\sX[i]$, $N=\sY[j]$ for some $\sX,\sY \in \ulMCor$ and $i,j \in \Z$. Moreover, by resolution of singularities, we may and do assume that $\sX$ and $\sY$ are normal crossing. 
Moreover, since the problem is Zariski local on $\hX$ and $\hY$, we may and do assume that $\hX = \Spec A$, $\hY = \Spec B$ are affine.
Since $\tilde{R}_{dR}$ is triangulated, we may assume $i=j=0$. Since $\tilde{R}_{dR}(\sX) =R_{dR}(\sX)= \oplus_{q \geq 0}\ulMOmega^q (\sX)[-q]$ (and similarly for $\sY$, $\sX \otimes \sY$) by Theorem \ref{thm:main-II} and by the affine-ness of $\hX,\hY$, we are reduced to showing that the natural morphism of graded $k$-modules that is induced by the universal property of the graded tensor product
\[
\mu_{\sX,\sY} : \left ( \oplus_{q \geq 0}\ulMOmega^q (\sX) \right) \otimes_k \left ( \oplus_{q \geq 0}\ulMOmega^q (\sY) \right) \to \oplus_{q \geq 0}\ulMOmega^q (\sX \otimes \sY)
\]
is an isomorphism. 

Since the problem is \'etale local on $\hX$ and $\hY$, we may write 
\[
\mX = \Spec A/(s_1^{a_1}\cdots s_m^{a_m}), \quad \mY = \Spec B/(t_1^{b_1}\cdots t_n^{b_n})
\]
for some local parameters $s_1,\dots,s_m \in A$, $t_1,\dots,t_m \in B$, and for some positive integers $a_1,\dots,a_m,b_1,\dots,b_n \in \Z_{\geq 1}$.
In this case, we have, by Example \ref{exam:sncOmegaFree}, the following identifications:
\begin{align*}
\ulMOmega^{\bullet} (\sX) &= s_1^{1-a_1} \cdots s_m^{1-a_m}\Omega^\bullet_{A/k}, \\
\ulMOmega^{\bullet} (\sY) &= t_1^{1-b_1} \cdots t_n^{1-b_n}\Omega^\bullet_{B/k}, \\
\ulMOmega^{\bullet} (\sX \otimes \sY) &= s_1^{1-a_1} \cdots s_m^{1-a_m} \cdot t_1^{1-b_1} \cdots t_n^{1-b_n} \Omega^\bullet_{A \times_k B/k}, 
\end{align*}
and hence we are reduced to proving $\Omega^\bullet_{A/k} \otimes_{k} \Omega^\bullet_{B/k} \cong \Omega^\bullet_{A\otimes_k B/k}$, where the $\otimes$ on the left hand side is the graded tensor product, but this is well-known. Indeed, for any diagram of commutative rings $A \leftarrow C \rightarrow B$, one has $\Omega^1_{A \otimes_C B/C} \cong \Omega^1_{A/C} \otimes_{C} B \oplus A \otimes_C \Omega^1_{B/C}$ by the universal property of $\Omega$. Applying $\wedge^q_{A\otimes_C B}$ to both sides of the isomorphism, one easily gets an isomorphism of graded $C$-modules $\Omega^\bullet_{A\otimes_C B} \cong \Omega^\bullet_{A/C} \otimes_C \Omega^\bullet_{B/C}$.

Next we prove the case $\ell=m+n>0$. We may and do assume $m \geq n$ without loss of generality, and hence $m >0$. Then one can find a distinguished triangle $M_0 \to M \to M_1 \to +1$ in $K^b (\ulMCor)$ such that the lengths of $M_0$ and $M_1$ are strictly smaller than $m$. Applying $\otimes N$, we obtain another distinguished triangle 
\[
M_0 \otimes N \to M \otimes N \to M_1 \otimes N \to +1.
\]
Since the maps $\mu_{M_0,N}$ and $\mu_{M_1,N}$ are isomorphisms by the induction hypothesis, the five lemma shows that $\mu_{M,N}$ is also an isomorphism, as desired.
\end{proof}

As an application of Theorem \ref{thm:monoidal}, we obtain the following.

\begin{cor} \label{cor:dualisable}
Let $T \in \ulMDM^{\eff}_{\gm}$ be an object such that $R_{dR}$ factors through $\ulMDM^{\eff}_{\gm}[T^{-1}]$.  Then $R_{dR}(T)$ is one dimensional. 
Moreover, let $M \in \ulMDM^{\eff}_{\gm}$ be an object such that $R_{dR}$ factors through $\ulMDM^{\eff}_{\gm}[T^{-1}]$ and $M$ is dualisable in $\ulMDM^{\eff}_{\gm}[T^{-1}]$. Then $R_{dR}(M)$ is finite dimensional. 
\end{cor}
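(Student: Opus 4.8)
The argument is formal. First I would record two facts about the target category. By Remark~\ref{rem:dervec}, $D(\mathrm{Vect}_k)$ is the symmetric monoidal category of $\Z$-graded $k$-vector spaces under the graded tensor product, and in it: (i) an object $V$ is $\otimes$-invertible if and only if $\dim_k V = 1$ --- indeed if $V \otimes_k W \cong k$ then $\dim_k V \cdot \dim_k W = 1$, and conversely a one-dimensional space placed in a single degree $n$ is inverted by $k$ placed in degree $-n$; and (ii) an object $V$ is dualisable if and only if $\dim_k V < \infty$ --- a finite-dimensional graded space is visibly rigid, while if a coevaluation $k \to V \otimes_k V^\vee$ exists then its image lies in a finite-dimensional subspace, forcing $V$ to be finite-dimensional. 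These are precisely the two conclusions ``one dimensional'' and ``finite dimensional'' of the statement.

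Second, I would read the hypothesis as follows. Since $\ulMDM^{\eff}_{\gm}[T^{-1}]$ is obtained from $\ulMDM^{\eff}_{\gm}$ by formally $\otimes$-inverting the object $T$, and $R_{dR}$ is monoidal by Theorem~\ref{thm:monoidal}, the phrase ``$R_{dR}$ factors through $\ulMDM^{\eff}_{\gm}[T^{-1}]$'' supplies a monoidal triangulated functor
\[
\bar{R}_{dR} : \left( \ulMDM^{\eff}_{\gm}[T^{-1}] \right)^{\op} \longrightarrow D(\mathrm{Vect}_k), \qquad \bar{R}_{dR} \circ L^{\op} \cong R_{dR},
\]
where $L : \ulMDM^{\eff}_{\gm} \to \ulMDM^{\eff}_{\gm}[T^{-1}]$ denotes the localisation functor: by the universal property of $\ulMDM^{\eff}_{\gm}[T^{-1}]$ such a monoidal factorisation exists exactly when $R_{dR}(T)$ is $\otimes$-invertible, and one may alternatively build $\bar{R}_{dR}$ directly from the presentation of $\ulMDM^{\eff}_{\gm}[T^{-1}]$ as the $2$-colimit of $\ulMDM^{\eff}_{\gm} \xrightarrow{-\otimes T} \ulMDM^{\eff}_{\gm} \xrightarrow{-\otimes T} \cdots$, using the isomorphisms $R_{dR}(M \otimes T) \cong R_{dR}(M) \otimes R_{dR}(T)$ of Theorem~\ref{thm:monoidal} to equip it with monoidal coherence.

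Both assertions then follow at once. By construction $T$ is $\otimes$-invertible in $\ulMDM^{\eff}_{\gm}[T^{-1}]$, and a strong monoidal functor preserves $\otimes$-invertible objects (the contravariance is harmless, an opposite symmetric monoidal category having the same dualities); hence $R_{dR}(T) \cong \bar{R}_{dR}(L(T))$ is $\otimes$-invertible in $D(\mathrm{Vect}_k)$, so one dimensional by (i). Similarly, if $M$ is dualisable in $\ulMDM^{\eff}_{\gm}[T^{-1}]$, then the strong monoidal functor $\bar{R}_{dR}$ carries it to a dualisable object, so $R_{dR}(M) \cong \bar{R}_{dR}(L(M))$ is dualisable, hence finite dimensional by (ii). The only step deserving attention is the passage from ``factors through'' to the \emph{monoidal} functor $\bar{R}_{dR}$; I do not expect a genuine obstacle, as this is exactly the universal property of $\ulMDM^{\eff}_{\gm}[T^{-1}]$ and Theorem~\ref{thm:monoidal} provides precisely the coherence needed to construct $\bar{R}_{dR}$ by hand if one prefers.
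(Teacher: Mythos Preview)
Your proposal is correct and follows essentially the same approach as the paper's proof, which simply observes that by assumption $R_{dR}(T)$ (resp.\ $R_{dR}(M)$) is invertible (resp.\ dualisable) in $D(\mathrm{Vect}_k)$, and that invertible (resp.\ dualisable) $k$-vector spaces are precisely the one-dimensional (resp.\ finite-dimensional) ones. You have spelled out in more detail the passage from ``factors through'' to a monoidal functor $\bar{R}_{dR}$ and the preservation of invertibility and dualisability, but this is exactly what the paper's ``By assumption'' is compressing.
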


\begin{proof}
By assumption, $R_{dR}(T)$ (resp. $R_{dR}(M)$) is an invertible object (resp. a dualisable object) in the category of $k$-vector spaces.
Since a $k$-vector space is invertible (resp. dualisable) precisely when it is one (resp. finite) dimensional, we are done.
\end{proof}

Finally, we check that the modulus de Rham realisation functor $R_{dR}$ is an extension of the de Rham realisation of Voevodsky's geometric mixed motives. 
Let $\mathbf{\Omega}^\bullet$ be the motivic complex constructed in \cite{LcW09}. Recall that Lecomte-Wach's de Rham realisation functor is defined by 
\[
H^q_{DR} (M) := \colim_{n} \Hom_{\DM^{\eff}} (M,\tau_{\leq n } \mathbf{\Omega}^\bullet [q]),
\]
where $\tau_{\leq n}$ denote the truncation functor. 
We regard the direct sum $H_{DR}^* := \oplus_q H_{DR}^q$ as a functor from $\DM^{\eff,\op}_{\gm}$ to the category of graded $k$-vector spaces (or the derived category of $k$-vector spaces).
Of course, for any $X \in \Sm$, this coincides with the de Rham cohomology:
\[
H^q_{DR} (M(X)) \cong H^q (X,\Omega^\bullet). 
\]
We prove that the functor $R_{dR}$ from Theorem \ref{thm:monoidal} is an extension of the de Rham realisation constructed in \cite{LcW09}, at least on the category of geometric motives. Namely:

\begin{thm} \label{thm:LWcomparison}
The composite 
\[
\DM^{\eff,\op}_{\gm} \to \ulMDM^{\eff,\op}_{\gm} \xrightarrow{R_{dR}} D(\mathrm{Vect}_k)
\]
coincides with the de Rham realisation functor $H_{DR}^*$.
\end{thm}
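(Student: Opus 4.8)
The plan is to compare the two triangulated functors $R_{dR}\circ\iota$ and $H^*_{DR}$ on $\DM^{\eff,\op}_{\gm}$ by first reducing, via the triangulated structure, to motives of smooth projective varieties, and then matching the objects that represent them, $\oplus_{q}\RMOmega^q[-q]$ on the modulus side and $\mathbf{\Omega}^\bullet$ on the Voevodsky side. Here $\iota\colon\DM^{\eff}_{\gm}\hookrightarrow\ulMDM^{\eff}_{\gm}$ denotes the fully faithful monoidal functor recalled before Theorem~\ref{main:monoidal}.

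First I would carry out the reduction. Using the presentation $\DM^{\eff}_{\gm}\simeq\bigl[K^b(\Cor)/\langle\mathrm{HI},\mathrm{MV}\rangle\bigr]^{\natural}$ — the $\AA^1$-homotopy analogue of the description of $\ulMDM^{\eff}_{\gm}$ exploited in the proof of Theorem~\ref{thm:monoidal} — together with the fact that both $R_{dR}\circ\iota$ and $H^*_{DR}$ are honest triangulated functors on $\DM^{\eff,\op}_{\gm}$ (for $R_{dR}\circ\iota$ because $\iota$ is monoidal and triangulated, for $H^*_{DR}$ by \cite{LcW09}), one reduces — by the same induction on the length of a bounded complex of correspondences used in the proof of Theorem~\ref{thm:monoidal} — to producing a natural isomorphism on the objects $M(X)$, $X\in\Sm_k$. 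By resolution of singularities, Zariski Mayer--Vietoris, and the Gysin triangles expressing $M(X)$ in terms of the $M(D_I)$ for the closed strata $D_I$ of a good compactification $(\overline{X},D)$, it then suffices to treat $X=Y$ smooth projective.

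For $Y$ smooth projective one has $\iota M(Y)\simeq M(Y,\varnothing)$, so by the corollary of Theorem~\ref{thm:main-II}
\[
R_{dR}(\iota M(Y))\;\cong\;\bigoplus_{q\ge0}R\Gamma_{\Zar}\bigl(Y,\Omega^q_Y\bigr)[-q],
\]
while $H^*_{DR}(M(Y))\cong R\Gamma_{\Zar}(Y,\Omega^\bullet_Y)$ is algebraic de Rham cohomology. The comparison map I would use is obtained by identifying $\omega_!\bigl(\oplus_q\RMOmega^q[-q]\bigr)$ in $\DM^{\eff}$ with $\colim_n\tau_{\le n}\mathbf{\Omega}^\bullet$, using the colimit formula $\colim_m\ulMOmega^q_{(\overline X,mD)}\xrightarrow{\sim}\Omega^q_X$ from the introduction together with the full faithfulness of $\iota$, so that $R_{dR}\circ\iota$ becomes the functor on $\DM^{\eff}_{\gm}$ represented by this object of $\DM^{\eff}$; the statement then becomes a comparison, internal to $\DM^{\eff}$, between this object and $\mathbf{\Omega}^\bullet$, which on $M(Y)$ for $Y$ smooth projective is the degeneration of the Hodge--de Rham spectral sequence (Deligne), and which globalizes to general $X$ by compatibility with the Gysin triangles, since Hodge and de Rham cohomology agree on each smooth projective stratum $D_I$.

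The hard part will be exactly this last matching step. The object $\oplus_q\RMOmega^q[-q]$ is a genuine direct sum in $\ulMDM^{\eff}$, carrying no de Rham differential, whereas $\mathbf{\Omega}^\bullet$ and its truncations do; proving that pairing $\iota M(X)$ against $\oplus_q\RMOmega^q[-q]$ nonetheless reproduces de Rham — not merely Hodge — cohomology, and that the resulting isomorphism is literally the one of \cite{LcW09}, requires a precise analysis of $\iota$, of $\omega_!$ applied to the modulus Hodge object, and of the truncation tower $\{\tau_{\le n}\mathbf{\Omega}^\bullet\}$ (the truncations and the colimit being forced because $\mathbf{\Omega}^\bullet$ is unbounded above in $\DM^{\eff}$). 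Carrying this bookkeeping through, so that the natural comparison is seen to be an isomorphism rather than merely a degree-by-degree dimension count, is the technical core of the proof.
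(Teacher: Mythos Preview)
Your high-level strategy --- reduce to motives of smooth projective varieties and invoke Hodge--de Rham degeneration --- is the right one, and agrees with the paper. But the execution you sketch diverges from the paper's in two ways, one harmless and one substantive.

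First, the harmless difference: the paper does not pass through Gysin triangles or good compactifications. It simply uses that $\DM^{\eff}_{\gm}$ is the idempotent completion of the essential image of $K^b(\mathbf{ProjCor}) \to \DM^{\eff}_{\gm}$, so it suffices to compare the two triangulated functors on $X[-n]$ for $X$ smooth projective and $n\in\ZZ$. Your extra reduction is not wrong, just unnecessary.

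Second, and more importantly: the detour through $\omega_!\bigl(\oplus_q\RMOmega^q[-q]\bigr)$ is both unnecessary and circular. Identifying this object with $\colim_n\tau_{\le n}\mathbf{\Omega}^\bullet$ in $\DM^{\eff}$ is, by Yoneda on the generators $M(Y)$, \emph{equivalent} to the statement you are trying to prove; it does not produce a comparison map independently. The paper bypasses this entirely: on a smooth projective $X$ it just computes both sides directly, getting $\oplus_{q} H^{n-q}(X,\Omega^q)$ on the modulus side and $H^n(X,\Omega^\bullet)$ on the Lecomte--Wach side, related by the Hodge decomposition.

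The genuine technical content --- which your proposal names as ``the hard part'' but then leaves to ``precise analysis'' --- is not the bookkeeping with $\omega_!$ or truncations. It is that the Hodge decomposition must be \emph{natural for finite correspondences}, not merely for scheme morphisms, since we are comparing functors on $\mathbf{ProjCor}$. The paper handles this cleanly: take a Cartan--Eilenberg resolution $I^{*,*}$ of $\Omega^\bullet$ inside $\NST$ (Nisnevich sheaves with transfers), so that the associated spectral sequence of the double complex $\Gamma(X,I^{*,*})$ is automatically functorial on $\Cor$; since injectives in $\NST$ restrict to flasque sheaves on $\Sm$, this spectral sequence is the Hodge-to-de Rham spectral sequence. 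Its $E_1$-degeneration for smooth projective $X$ then gives a $\Cor$-natural isomorphism, which is exactly what is needed. This is the step you should focus on.
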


\begin{proof}
Let $F$ denote the composition in the assertion, and set $G:=H^\bullet_{DR}$.
Note that $F,G$ are triangulated by construction.
Let $C$ be the collection of motives of the form $M(X)$ with $X$ projective smooth.  
Recall that $\DM^{\eff}$ is compactly generated by $C$. 
In other words, any object in $\DM^{\eff}_{\gm}$ is a direct summand of an iterated extensions of direct sums of shifts of objects of $C$.
This implies that $\DM^{\eff}_{\gm}$ is equivalent to the idempotent completion of the essential image of the triangulated functor 
\[
p: K^b (\mathbf{ProjCor}) \to \DM^{\eff}_{\gm} ; \quad X \mapsto M(X),
\]
where $\mathbf{ProjCor}$ denotes the full subcategory of $\Cor$ consisting of projective smooth schemes over $k$.
Thus, we are reduced to showing that there exists an isomorphism $F\circ p \cong G\circ p$. 

Let $i_n : \mathbf{ProjCor} \to K^b (\mathbf{ProjCor})$ be the functor sending $X$ to $X[n]$ for each $n \in \Z$. 
Since any triangulated functor $H$ on $K^b (\mathbf{ProjCor})$ is determined by the compositions $H \circ i_n, n \in \Z$, it suffices to show that for any $n \in \Z$, there exists a natural isomorphism $F \circ p \circ i_n \cong G \circ p \circ i_n$. 
For any projective smooth $X \in \mathbf{ProjCor}$, and for any $n \in \Z$, we compute
\begin{align*}
F \circ p (X[-n]) &= R_{dR}(M(X[-n])) = \Hom_{\ulMDM^{\eff}} (M(X[-n]), \oplus_{q \geq 0} \RMOmega^q[-q]) \\
&\cong \Hom_{\ulMDM^{\eff}} (M(X[0]), \oplus_{q \geq 0} \RMOmega^q[n-q]) \\
&\cong \oplus_{q \geq 0} H^{n-q} (X,\Omega^{q}),
\end{align*}
and 
\[
G \circ p (X[-n]) = H^*_{DR} (M(X[-n])) \cong H^n (X, \Omega^\bullet).
\]
Since $X$ is projective smooth over a field $k$ of characteristic $0$, we have the Hodge decomposition $H^n (X, \Omega^\bullet) \cong \oplus_{q \geq 0} H^{n-q} (X,\Omega^{q})$. 

Thus, it remains to show that this decomposition forms a natural transformation on $\mathbf{ProjCor}$.
Note that the Hodge decomposition comes from the Hodge-to-de Rham spectral sequence:
\[
E_1^{i,j} = H^j (X,\Omega^i) \Longrightarrow H^{n} (X,\Omega^\bullet),
\]
which degenerates in $E_1$ terms when $X$ is projective smooth, and hence gives the Hodge decomposition. 
Therefore, it suffices to show that the above spectral sequence is functorial on $\mathbf{ProjCor}$, or even more strongly, on $\Cor$. 
To see this, take a Cartan-Eilenberg resolution $I^{*, *}$ of the de Rham complex $\Omega^\bullet$ in the category of Nisnevich sheaves with transfers $\NST$. This is possible since $\NST$ has enough injectives, and $\Omega^\bullet$ is bounded below. 
Then we obtain a spectral sequence associated to the double complex $\Gamma (X,I^{*, *})$:
\[
E_1^{i,j} = H^j \Gamma (X,I^{i,*}) \Longrightarrow H^{i+j} \mathrm{Tot} \Gamma (X,I^{*,*}) = \Hom_{D(\NST)} (X[0], \Omega^\bullet [j]), 
\]
which is functorial on $\Cor$ by construction. 
But we know that for any injective object $I \in \NST$, the restriction $I |_{\Sm}$ is flasque (and hence acyclic). Therefore, the Hodge-to-de Rham spectral sequence is induced by the double complex $I^{*,*}|_{\Sm}$, and hence it is functorial on $\Cor$, as desired. 
\end{proof}

\bibliographystyle{halpha-abbrv}
\bibliography{bib}

\printindex
\printindex[not]

\end{document}